\documentclass{amsart}

\DeclareFontFamily{U}{mathb}{\hyphenchar\font45}
\DeclareFontShape{U}{mathb}{m}{n}{
      <5> <6> <7> <8> <9> <10> gen * mathb
      <10.95> mathb10 <12> <14.4> <17.28> <20.74> <24.88> mathb12
      }{}
\DeclareSymbolFont{mathb}{U}{mathb}{m}{n}
\DeclareMathSymbol{\righttoleftarrow}{3}{mathb}{"FD}

\usepackage[english]{babel}
\usepackage[utf8]{inputenc}
\usepackage{graphicx}
\usepackage{xcolor}
\usepackage{longtable}
\usepackage{float}

\usepackage[all]{xy}

\usepackage{amsmath}
\usepackage{amsthm}
\usepackage{amssymb}
\usepackage{amscd}
\usepackage{tikz}
\usepackage{csquotes}
\usepackage{imakeidx} 
\usepackage{appendix}
\usepackage{tikz-cd}
\usepackage{verbatim}
\usepackage{enumitem}
\usepackage{multicol}
\usepackage{aliascnt}
\usepackage[backref=page]{hyperref}
\usepackage{cleveref}
\usepackage{mathtools}

\hypersetup{colorlinks=true,linkcolor=blue,anchorcolor=blue,citecolor=blue}

\newtheorem{thm}{Theorem}[section]
\crefname{thm}{Theorem}{Theorems} 

\newaliascnt{prop}{thm}       
\newtheorem{prop}[prop]{Proposition} 
\aliascntresetthe{prop}       
\crefname{prop}{Proposition}{Propositions} 

\newaliascnt{lemma}{thm}
\newtheorem{lemma}[lemma]{Lemma}
\aliascntresetthe{lemma}
\crefname{lemma}{Lemma}{Lemmas}

\newaliascnt{cor}{thm}
\newtheorem{cor}[cor]{Corollary}
\aliascntresetthe{cor}
\crefname{cor}{Corollary}{Corollaries}

\newaliascnt{conj}{thm}
\newtheorem{conj}[conj]{Conjecture}
\aliascntresetthe{conj}
\crefname{conj}{Conjecture}{Conjectures}

\newaliascnt{question}{thm}
\newtheorem{question}[question]{Question}
\aliascntresetthe{question}
\crefname{question}{Question}{Questions}

\newaliascnt{situation}{thm}
\newtheorem{situation}[situation]{Situation}
\aliascntresetthe{situation}
\crefname{situation}{Situation}{Suestions}

\theoremstyle{definition}
\newaliascnt{defin}{thm}
\newtheorem{defin}[defin]{Definition}
\aliascntresetthe{defin}
\crefname{defin}{Definition}{Definitions}

\newaliascnt{construction}{thm}
\newtheorem{construction}[construction]{Construction}
\aliascntresetthe{construction}
\crefname{construction}{Construction}{Constructions}
\Crefname{construction}{Construction}{Constructions}

\newaliascnt{claim}{thm}
\newtheorem{claim}[claim]{Claim}
\aliascntresetthe{claim}
\crefname{claim}{Claim}{Claims}
\Crefname{claim}{Claim}{Claims}

\newtheorem{example}[thm]{Example}
\newtheorem{exercise}[thm]{Exercise}

\theoremstyle{remark}
\newaliascnt{rmk}{thm}
\newtheorem{rmk}[rmk]{Remark}
\aliascntresetthe{rmk}
\crefname{rmk}{Remark}{Remarks}
\Crefname{rmk}{Remark}{Remarks}

\numberwithin{equation}{section}

\newcommand{\Q}{\mathbb Q}
\newcommand{\F}{\mathbb F}

\newcommand{\Z}{\mathbb Z}
\newcommand{\G}{\mathbb G}

\newcommand{\w}{\mathfrak w}
\renewcommand{\P}{\mathbb P}
\newcommand{\Spec}{\operatorname{Spec}}
\newcommand{\Br}{\operatorname{Br}}

\newcommand{\A}{\mathbb A}
\newcommand{\mc}[1]{\mathcal{#1}}
\newcommand{\cl}{\overline}

\renewcommand{\phi}{\varphi}

\newcommand{\on}[1]{\operatorname{#1}}
\newcommand{\ang}[1]{\langle{#1}\rangle}

\newcommand{\Pic}{\mathrm{Pic}}

\newcommand{\eqto}{\stackrel{\lower1.5pt\hbox{$\scriptstyle\sim\,$}}\to}
\newcommand{\eqdashto}{\stackrel{\lower1.5pt\hbox{$\scriptstyle\sim\,$}}\dashrightarrow}

\makeatletter
\@namedef{subjclassname@2020}{\textup{2020} Mathematics Subject Classification}
\makeatother

\definecolor{MyDarkGreen}{rgb}{0.0,0.5,0.0}

\newcommand{\Sel}{\mathrm{Sel}}
\renewcommand{\c}{\mathfrak c}

\newcommand{\X}{\mathcal X}
\newcommand{\rk}{\mathrm{rk}}
\newcommand{\e}{\varepsilon}
\newcommand{\Ga}{\Gamma}
\renewcommand{\O}{\mathcal O}
\newcommand{\Gal}{\mathrm{Gal}}

\newcommand{\val}{\mathrm{val}}
\DeclareFontEncoding{OT2}{}{}
\DeclareTextFontCommand{\textcyr}{\fontencoding{OT2}\fontfamily{wncyr}\fontseries{m}\fontshape{n}\selectfont}

\newcommand{\Sha}{\textcyr{Sh}}

\newcommand{\bthe}{\begin{thm}}
\newcommand{\ethe}{\end{thm}}
\newcommand{\ble}{\begin{lemma}}
\newcommand{\ele}{\end{lemma}}
\newcommand{\bpr}{\begin{prop}}
\newcommand{\epr}{\end{prop}}
\newcommand{\bco}{\begin{cor}}
\newcommand{\eco}{\end{cor}}
\newcommand{\bde}{\begin{defin}}
\newcommand{\ede}{\end{defin}}
\newcommand{\brem}{\begin{rmk}}
\newcommand{\erem}{\end{rmk}}
\newcommand{\bexe}{\begin{exercise}}
\newcommand{\eexe}{\end{exercise}}
\newcommand{\bexa}{\begin{example}}
\newcommand{\eexa}{\end{example}}
\newcommand{\bconj}{\begin{conj}}
\newcommand{\econj}{\end{conj}}
\newcommand{\bques}{\begin{question}}
\newcommand{\eques}{\end{question}}

\title{Zero-cycles on surfaces dominated by products of hyperelliptic curves}

\author{Jean-Louis Colliot-Th\'el\`ene}
\address{CNRS\\
    Laboratoire de Math\'ematiques d'Orsay\\
    Universit\'e Paris-Saclay\\
307 rue Michel Magat, 91400\\
    Orsay, France} 
\email{jean-louis.colliot-thelene@universite-paris-saclay.fr}

\author{Federico Scavia}
\address{CNRS\\
	Institut Galil\'ee\\
	Universit\'e Sorbonne Paris Nord\\
	99 avenue Jean-Baptiste Cl\'ement, 93430\\ 
	Villetaneuse, France}
\email{scavia@math.univ-paris13.fr}

\author{Alexei Skorobogatov}
\address{Department of Mathematics \\
     South Kensington Campus \\
     Imperial College London \\
SW7 2AZ United Kingdom}
\address{Institute for the Information Transmission Problems\\
Russian Academy of Sciences\\
Moscow, 127994 Russia}
\email{a.skorobogatov@imperial.ac.uk}

\subjclass[2020]{Primary 14G05; Secondary 11G10, 11G30, 14C25, 14H40}
\keywords{Zero-cycles, Kummer surfaces, hyperelliptic curves, quadratic twists, Selmer groups, Cassels--Tate pairing, Tate--Shafarevich group, fibration method}

\date{\today}
\begin{document}

\begin{abstract}
Conditionally on the finiteness of the relevant Tate--Shafarevich groups, we prove a local-to-global result for zero-cycles of degree $1$ on the surfaces given by $y^2=f_1(x_1)f_2(x_2)$, where the 
polynomials $f_1$ and $f_2$ are algebraically general.
The proof combines the fibration method, parity results for $2$-Selmer groups in quadratic twist families, and a variant of a theorem of Morgan on the variation of the Cassels--Tate pairing.
\end{abstract}

\maketitle

\tableofcontents

\section{Introduction}
Let $X$ be a smooth, projective, geometrically integral variety over a
number field $k$. Assume that the adelic space
\[
        X({\bf A}_k)=\prod_v X(k_v)
\]
is non-empty. It has been known for a long time that this condition does
not suffice to ensure that $X(k)$ is non-empty. It does not even suffice
to ensure that the index $I(X)$ of $X$ is equal to $1$, that is, that
there exists a zero-cycle of degree $1$ on $X$, as shows the example of smooth projective curves of genus 1.

Most counterexamples known before 1970 were accounted for by Manin by
means of the Brauer--Grothendieck group $\Br(X)$. Namely, class field
theory gives a pairing  
$X({\bf A}_k)\times\Br(X)\to\Q/\Z$, and one has an inclusion
\[
        X(k) \subset X({\bf A}_k)^{\Br},
\]
where $X({\bf A}_k)^{\Br}$ denotes the Brauer--Manin set of adelic points
orthogonal to $\Br(X)$. 

\begin{question}\label{question:bmo}
    If $X({\bf A}_k)^{\Br}$ is non-empty, does it follow that $X(k)\neq \emptyset$?
\end{question}
 For arbitrary smooth
projective varieties, \Cref{question:bmo} has a negative answer \cite{S99}. It is conjectured that \Cref{question:bmo} has a positive answer in the following cases: for rationally connected varieties \cite[Conjecture 14.1.2]{CTS21}, for K3 surfaces \cite[Conjecture 14.3.12]{CTS21}, and for homogeneous spaces of abelian varieties, where Manin observed that a positive answer to \Cref{question:bmo} follows from the finiteness of the Tate--Shafarevich groups (see \cite[p.~379]{CTS21}).

Although \Cref{question:bmo} has a negative answer in general, it is still open whether the following weaker form holds for arbitrary smooth projective varieties.

\begin{conj}\label{conj}
    If $X({\bf A}_k)^{\Br}$ is non-empty, then $I(X)=1$.
\end{conj}

\brem{\rm
Conjecture \ref{conj} holds for a smooth, projective, geometrically integral curve $C$ if the Tate--Shafarevich group of the Jacobian $J=\Pic^0_{C/k}$ is finite. Indeed, the canonical embedding
$C\hookrightarrow\Pic^1_{C/k}$ gives an inclusion 
$C({\bf A}_{k})^{\Br}
\subset\Pic^1_{C/k}({\bf A}_{k})^{\Br}$, so the last set is non-empty. By Manin,
the finiteness of $\Sha(J)$
then implies that 
$\Pic^1_{C/k}$ has a $k$-point.
A well-known argument
(Lemma \ref{lemma1} below)
gives $I(C)=1$.}
\erem

In this paper we extend this observation to certain surfaces of general type dominated by products of two hyperelliptic curves.

\Cref{conj} follows from a more precise conjecture put forward by Colliot-Thélène--Sansuc  and Kato--Saito \cite[Conjecture 15.1.3]{CTS21}. Starting with
work of Salberger on conic bundle surfaces \cite{Sal88},
substantial progress has been made on \Cref{conj} for varieties
admitting a dominant morphism to $\P^1_k$ with rationally connected
fibres of a type already under control; see, for example,
work of Harpaz and Wittenberg \cite{HW16}.

In this introduction and in the main theorem of this paper
\emph{we
assume the finiteness of the Tate--Shafarevich groups of the relevant
abelian varieties.}

For a variety $X$ with a dominant morphism to $\P^1_k$ whose generic
fibre is a torsor under an abelian variety $A_t$ over $k(\P^1_k)=k(t)$,
Swinnerton-Dyer \cite{SD95} found a method to prove, under rather
stringent assumptions, that \Cref{question:bmo} has a positive answer. These
assumptions are due to the technical nature of the method and are
presumably too strong. In many cases where the method applies, only a very small subgroup of $\on{Br}(X)$ comes into play, and the
final result simply takes the form of the Hasse principle:
\[
        X({\bf A}_k)\neq \emptyset \quad \Longrightarrow \quad X(k)\neq \emptyset .
\]
The goal is to find a value $t_0\in k$ of the parameter
$t$ on $\P^1_k$ such that the fibre $X_{t_0}$ is smooth, has points in
all completions of $k$, and the relevant Selmer group of the abelian
variety $A_{t_0}$ is sufficiently small. Under the finiteness hypothesis
for $\Sha(A_{t_0})$, this then implies that
$X_{t_0}(k)\neq \emptyset$, and hence $X(k)\neq \emptyset$.

Swinnerton-Dyer's method was elaborated in \cite{CTSSD98} and further  developed in papers by Swinnerton-Dyer, Colliot-Thélène,  Swinnerton-Dyer--Skorobogatov, and Wittenberg.
The method was considerably simplified
by Harpaz and Skorobogatov in \cite{HS16}.
In \cite{Har19} Harpaz extended the method by introducing the second descent
and variation of the Cassels--Tate
pairing under quadratic twists.

In his recent paper \cite{Mor25}, Morgan proved the existence of quadratic twists that do not change the 2-Selmer group and give the
Cassels--Tate pairing on it any form
allowed by the constraints of
Poonen and Stoll \cite{PS}.
In the present paper, we use Morgan's new method 
as well as some of his earlier results. For another recent application of this technique
see \cite[Section 4]{MS}.

The authors of \cite{SSD05} and \cite{HS16} studied
\Cref{question:bmo} on
Kummer surfaces with
affine equation
\begin{equation}
        y^2=f_1(x_1)f_2(x_2), \label{eq1}
\end{equation}
where the polynomials $f_i$ have degree at most $4$. The
relevant family over $\P^1_k$ arises by considering the pair of simultaneous
quadratic-twist equations
\begin{equation}
    z_1^2=t f_1(x_1), \qquad 
    z_2^2=t f_2(x_2). \label{eq2}
\end{equation}

In the present paper we initiate the study of \Cref{conj} for smooth
projective surfaces with affine equation (\ref{eq1}),
where $f_1$ and $f_2$ have 
degree greater than 4. These are surfaces
of general type, for which  an affirmative answer to
\Cref{question:bmo} is not expected. More precisely, here is the setting that we will consider.

\begin{situation}\label{sit1}
Let $k$ be a number field. For $i=1,2$, let $g_i\geq 2$ be an integer. Let $f_i(x)\in k[x]$ be a separable polynomial of degree $2g_i+2$, discriminant $\Delta_i$ and splitting field $L_i$.
Assume that $\Gal(L_i/k)\simeq S_{2g_i+2}$ for $i=1,2$, and that
$L_1$ and $L_2$ are linearly disjoint over $k$. 

Let $S_0$ be a finite set of places of $k$ containing the archimedean places, the primes dividing $2$, the primes $v$ where at least one of $f_1(x)$ and $f_2(x)$ is not in $\O_v[x]$, or the leading coefficient of at least one of $f_1(x)$ and $f_2(x)$ is not in $\O_v^\times$, and the primes $v$ with residue field of size $<4\max\{g_1,g_2\}^2$. 
Assume the existence of primes $w_1\neq w_2$ not in $S_0$ such that each $f_i(x)$ has a
unique double root modulo $w_i$ and $\val_{w_i}(\Delta_i)=2$, and $f_i(x)$ is separable modulo $w_j$, for $i\neq j$.

Let $C_i$ be a smooth projective hyperelliptic curve with affine equation $y^2=f_i(x)$. Let $X$ be a smooth, projective, geometrically integral variety birational to the affine variety with equation \[y^2=f_1(x_1)f_2(x_2).\] 
\end{situation}

Let us denote by $\iota_1\colon C_1\to C_1$ and $\iota_2\colon C_2\to C_2$ the hyperelliptic involutions.
We may take $X$ to be the quotient of the blow-up of $C_1\times_k C_2$ at the fixed locus of the involution $(\iota_1,\iota_2)$, by the action of
the involution extending $(\iota_1,\iota_2)$. This is the minimal desingularization of the quotient of $C_1\times C_2$ by $(\iota_1,\iota_2)$.

Our main result is the following.

\begin{thm}\label{t1}
We place ourselves in \Cref{sit1}. Assume that the $2$-primary torsion subgroups of the Tate--Shafarevich groups of the Jacobians of all quadratic twists of $C_1$ and $C_2$ over $k$ are finite. Then $X({\bf A}_k)\neq\emptyset$ implies that $X$ has a zero-cycle of degree $1$.
\end{thm}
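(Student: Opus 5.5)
We follow the fibration method along the pencil \eqref{eq2}. For $t\in k^\times$, let $C_i^t$ be the quadratic twist $z_i^2=tf_i(x_i)$, with Jacobian $J_i^t$, and let $X_t$ be the fibre of $X\dashrightarrow\P^1_k$, birational to $C_1^t\times C_2^t$. A $k$-point $(P_1,P_2)\in C_1^t\times C_2^t$ maps to a point of $X$ via $y=z_1z_2/t$, so it suffices to find $t\in k^\times$ such that both $C_1^t$ and $C_2^t$ have zero-cycles of degree $1$. Here we need only zero-cycles on $X$: a zero-cycle of degree $1$ on each factor gives a degree-$1$ zero-cycle on the product via the exterior product, and pushing it forward gives one on $X$ (after moving it off the indeterminacy locus). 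By Lemma~\ref{lemma1} and the remark after Conjecture~\ref{conj}, it is enough to show that $\Pic^1_{C_i^t/k}$, a $J_i^t$-torsor, is trivial for $i=1,2$. Since $C_i^t$ has even degree, $2[\Pic^1]$ is the class of $\Pic^{2}\ni$ hyperelliptic divisor class, hence trivial, so $[\Pic^1_{C_i^t}]$ is $2$-torsion in $H^1(k,J_i^t)$. Its triviality therefore reduces to $2$-Selmer and $2$-primary $\Sha$ considerations, which are finite by hypothesis.

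\emph{Step 1 (local conditions).} From $X(\mathbf A_k)\ne\emptyset$ and the implicit function theorem, choose for each $v\in S_0$ (enlarged to contain $w_1,w_2$ and the bad primes of the $C_i$) a class $t_v\in k_v^\times/k_v^{\times2}$ such that $C_1^{t_v}(k_v)$ and $C_2^{t_v}(k_v)$ are nonempty. Then pick $t$ approximating these classes, with $t$ a unit outside $S_0$ except at a controlled set of auxiliary primes $q$ where $\val_q(t)$ is odd. Such $q$ are chosen via Chebotarev in $L_1L_2$, using linear disjointness and $S_{2g_i+2}$-monodromy so that the Frobenius has prescribed cycle type, for example fixing a root. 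This guarantees local solubility of $C_i^t$ at $q$, so $[\Pic^1_{C_i^t}]$ lies in $\Sel_2(J_i^t)$ image-wise. Concretely, the class of the torsor lifts to an element of $\Sel_2$, since $\Pic^1$ is locally trivial.

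\emph{Step 2 (Selmer control).} Using the Harpaz--Skorobogatov/Morgan parity and rank-lowering machinery for $2$-Selmer groups in quadratic twist families, add primes $q$ in suitable Chebotarev classes to cut $\Sel_2(J_i^t)$ down to the minimal subgroup. The primes $w_i$, where $f_i$ acquires exactly one double root with $\val(\Delta_i)=2$, act as toric reduction primes. They allow us to flip the parity of $\dim\Sel_2$ independently for $i=1,2$, and they break the obstruction that would otherwise force a nonzero Selmer element. The target is that $\Sel_2(J_i^t)$ is spanned by classes coming from the $2$-torsion Galois structure, which is constrained by $\Gal=S_{2g_i+2}$, together with the torsor class $[\Pic^1]$ itself.

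\emph{Step 3 (Cassels--Tate).} By Poonen--Stoll, $[\Pic^1]$ governs whether the Cassels--Tate pairing on $\Sha(J_i^t)[2]$ is alternating. Under finiteness of $\Sha[2^\infty]$, $[\Pic^1]$ is trivial iff it lies in the image of $J_i^t(k)/2$. Equivalently, after Step 2 we need the Cassels--Tate pairing on the residual Selmer group to be nondegenerate on the part not containing $[\Pic^1]$ and to kill $[\Pic^1]$ appropriately. Following a variant of Morgan's theorem, we twist further by primes that leave $\Sel_2$ unchanged but adjust the Cassels--Tate pairing to any form allowed by Poonen--Stoll. We choose the form making $\Sha(J_i^t)[2^\infty]$ have no element mapping to $[\Pic^1]$, which forces $[\Pic^1_{C_i^t}]$ into $J_i^t(k)/2J_i^t(k)$, i.e.\ trivial in $H^1$.

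\textbf{Main obstacle.} The key difficulty is doing Steps 2–3 \emph{simultaneously} for both curves with a single $t$. This requires a variant of Morgan's theorem for a pair of abelian varieties twisted by the same $t$. The plan is to use the linear disjointness of $L_1,L_2$ so that the Chebotarev conditions for the two curves are independent, and to handle the Poonen–Stoll parity constraint jointly via $w_1,w_2$.
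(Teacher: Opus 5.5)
\textbf{Where your plan matches the paper.} Your reduction to showing $[\Pic^1_{C_i^t/k}]=0$ for some $t\in\mathfrak S$ (then \Cref{lemma1}) is the paper's reduction, and so is the parity flip at $w_1,w_2$. Your Step~1 is also fine. Constructing $t$ directly by class field theory can replace the fibration theorem of \Cref{prop:fibration-method}, because the reciprocity constraint is vacuous: Frobenius classes fixing a root of each $f_i$ realize every sign pattern in $S_{2g_1+2}\times S_{2g_2+2}$. This is the same group theory as in \Cref{claim:vertical-brauer}.

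\textbf{The gap: Steps~2--3.} These are exactly where $[\Pic^1]$ has to be killed, and neither step does it as written.

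\emph{Step~2 is asserted, not proved.} The ``rank-lowering machinery'' you invoke is not available off the shelf for two even-degree hyperelliptic Jacobians twisted by the same $t\in\mathfrak S$, and you give no argument. Note also that, by \Cref{lem:maximal-galois-image}(3), your ``classes coming from the Galois structure'' are just $\langle\w_i\rangle$.

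\emph{Step~3 is either vacuous or does not work.} On $\langle\w_i\rangle$ the only admissible pairing is $0$, so after Step~2 there is nothing to adjust. If Step~3 is meant to carry the proof, it fails as stated: prescribing the Cassels--Tate pairing on $\Sel^2$ only gives the upper bound $\ker\bigl(\Sel^2(J_i^t)\to\Sha(J_i^t)[2]\bigr)\subset\operatorname{rad}$. No choice of form by itself forces $\w_i$ into that kernel.

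\emph{A factual slip.} The Poonen--Stoll class is $\c$, whose image in $H^1(k,J)$ is $[\Pic^{g-1}]$. It equals $\w$ only for $g$ even and vanishes for $g$ odd, so $[\Pic^1]$ does not in general govern whether the pairing is alternating.

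\textbf{The missing idea: a parity argument.} This bounds the kernel from below. For $t\in\mathfrak S$ one has $\langle\c,\c\rangle_{J_i^t}=0$ (\Cref{prop:ps-class}(6)). By \Cref{prop:ps-class}(5) and finiteness of $\Sha(J_i^t)[2^\infty]$, $\dim_{\F_2}\Sha(J_i^t)[2]$ is therefore even. Hence if $\dim\Sel^2(J_i^t)$ is odd, the map $\Sel^2(J_i^t)\to\Sha(J_i^t)[2]$ cannot be injective.

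\textbf{How the paper uses it.} With this argument no Selmer reduction is needed. The paper makes both Selmer dimensions odd (\Cref{prop:odd-odd}). It chooses admissible $P_i$ with radical exactly $\F_2\w_i$: a nondegenerate alternating form on an even-dimensional complement of $\w_i$. It then realizes both $P_1,P_2$ by a single twist that fixes the Selmer groups (\Cref{prop:simultaneous-morgan}). That twist uses two primes with Frobenius $(\sigma_1^*,\sigma_2^0)$ and $(\sigma_1^0,\sigma_2^*)$, with all other ramification in classes of $(2g_i+1)$-cycles with vanishing $\psi$. This gives $0\neq\ker\subset\F_2\w_i$, i.e.\ $\w_i\mapsto 0$.

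\textbf{Completing your route instead.} Your Selmer-reduction route can also be finished, but you must actually prove Step~2. One way is Mazur--Rubin-type lowering:
\begin{itemize}
\item twist at a prime whose Frobenius has cycle type $(1,1,1,2g_1-1)$ on the roots of $f_1$ and is a $(2g_2+1)$-cycle on those of $f_2$;
\item both permutations are even, as the reciprocity condition of \Cref{lem:morgan-global-twisting} requires, and both fix a root, which gives local solubility;
\item choose the prime by Chebotarev so that $\Sel^2(J_1^t)$ surjects onto the $2$-dimensional unramified subgroup at that prime.
\end{itemize}
This lowers $\dim\Sel^2(J_1^t)$ by $2$ and leaves $\Sel^2(J_2^t)$ unchanged. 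Even after reaching $\langle\w_i\rangle$, the final step is the parity argument above, not Morgan's theorem.
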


Several comments on \Cref{t1} are in order.

\begin{enumerate}
    \item The surfaces $X$ appearing in \Cref{sit1} are of general type (\Cref{lemma:X-general-type}).
    \item The set of $k$-points $(a_0,\dots,a_{2g_1+2};b_0,\dots,b_{2g_2+2})$ of $\mathbb{A}^{2g_1+3}_k\times_k\mathbb{A}^{2g_2+3}_k$ such that \Cref{sit1} holds for the polynomials $f_1(x)=a_{2g_1+2}x^{2g_1+2}+\dots+a_0$ and $f_2(x)=b_{2g_2+2}x^{2g_2+2}+\dots+b_0$ satisfies weak approximation. In fact, one can even fix suitable $w_1$ and $w_2$ in advance; see \Cref{rmk:wa}.
    \item If the degree of at least one $f_i$ is odd, then it is not difficult to show that any smooth projective model $X$ has a $k$-rational point.
    \item The assumption that
    the polynomials $f_i(x)$ are algebraically general (linear independence of their splitting fields and maximal Galois action) is used in the proof of \Cref{prop:fibration-method} to show that the vertical Brauer group of a certain fibration over $\P^1_k$ attached to $X$ comes from $\on{Br}(k)$. It is also used in the proofs of Propositions~\ref{lem:maximal-galois-image} and \ref{lem:maximal-galois-image-new}. 
    \item Local conditions similar to those imposed in \Cref{sit1}  appear also in the 
    aforementioned papers, for example in \cite[Theorems A, B]{HS16} and \cite[Theorem 1.2]{Mor25}. It would be interesting to understand to 
    what extent the combination of algebraic genericity of the polynomials with such local conditions implies that there is no Brauer-Manin obstruction to the existence of a zero-cycle of degree one.
    \end{enumerate}

Here is a brief outline of the paper.
In Section \ref{S2} we use the well-known fibration theorem for the
universal family of quadratic twists
to prove the existence of $t\in k^\times$ such that both curves in (\ref{eq2}) are everywhere locally soluble.
Section \ref{S3} summarizes various known results from the literature. In
Section \ref{S4} we prove that it is possible to choose $t\in k^\times$ such that both curves in (\ref{eq2}) are everywhere locally soluble and, moreover, the 2-Selmer groups of their Jacobians are $\F_2$-vector spaces of odd dimension.
In Section \ref{S5} we apply Morgan's method to prove the existence of $t\in k^\times$ such that, in addition to the above conditions, both Cassels--Tate
pairings have any given form allowed
by the theory of Poonen and Stoll.
The proof of Theorem \ref{t1} is
completed in Section \ref{S6}
by choosing $t\in k^\times$ so that
the kernel of each of the two
Cassels--Tate pairings is generated
by the class of $\Pic^1_{C_i/k}$. The 
existence of a zero-cycle of degree 1
on each twisted curve is an easy
consequence. Finally, in the appendix
we show that the surfaces featuring in \Cref{sit1} are of general type.

\subsubsection*{Notation}
For a field $k$, let $\cl{k}$ be a separable closure of $k$ and $\Gamma_k\coloneqq \on{Gal}(\cl{k}/k)$ the absolute Galois group of $k$.

For a smooth projective curve $C$ over a field $k$, we denote by $\Pic_{C/k}$ the Picard scheme of $C$ over $k$.

For an abelian variety $A$ over a 
field $k$ we denote by $A^\vee$
the dual abelian variety.
When $k$ is a number field, we denote by $\Sha(A)$ the Tate--Shafarevich group of $A$ and by $\Sha(A)_{\mathrm{nd}}$ the quotient of $\Sha(A)$ by its 
(conjecturally trivial) maximal divisible subgroup. For an integer $m\geq 1$, let $\Sel^m(A)$ be the $m$-Selmer group of $A$. Let $\rk_2(A)$ be the $2^\infty$-Selmer rank of $A$, defined as the
$\Z_2$-rank of $\varprojlim \Sel^{2^n}(A)$.
If the 2-primary subgroup of $\Sha(A)$ is finite, then  $\rk_2(A)=\rk(A(k))$.

\subsubsection*{Acknowledgement}
The authors are grateful to the Lodha Mathematical Sciences Institute in Mumbai for hospitality and support during the thematic programme on rational points, algebraic cycles and the local-global principle.
The third named author also thanks 
the Max Planck Institute for Mathematics in Bonn
for excellent working conditions and support. We thank Adam Morgan for a helpful discussion and his
comments on the first draft of the paper. We originally proved 
Theorem \ref{t1} under the additional assumption that $g_1$ and $g_2$ are even. The extension to the case
where $g_1$ and $g_2$ have arbitrary parities was explained to us by
Adam Morgan.

\section{The fibration argument}
\label{S2}

Let $k$ be a number field. 
Let $C_i$ be a smooth projective hyperelliptic curve with affine equation $y^2=f_i(x)$.
For $t\in k^\times$ we write $C_i^t$ for the quadratic twist of $C_i$ by $t$. This is the 
hyperelliptic curve with affine equation $y^2=tf_i(x)$. 
Define $\mathfrak S$ as the set of $t\in k^\times$ such that $C_i^t({\bf A}_k)\neq\emptyset$ for $i=1,2$. 

\begin{prop}\label{prop:fibration-method}
Let $k$ be a number field. For $i=1,2$, let $g_i$ be a positive integer. Let $f_i(x)\in k[x]$ be a separable polynomial of degree $2g_i+2$, discriminant $\Delta_i$ and splitting field $L_i$.
Assume that $\Gal(L_i/k)\simeq S_{2g_i+2}$ for $i=1,2$, and
$L_1$ and $L_2$ are linearly disjoint over $k$. 
Let $C_i$ be a smooth projective hyperelliptic curve with affine equation $y^2=f_i(x)$.
Let $X$ be a smooth, projective, geometrically integral variety birational to the affine variety with equation $y^2=f_1(x_1)f_2(x_2)$.
Suppose that $X({\bf A}_k)\neq\emptyset$.
    
    Let $\Sigma$ be a finite set of places of $k$, and for every $v\in \Sigma$ let $t_v\in k_v^\times$ be such that $C_i^{t_v}(k_v)\neq\emptyset$ for $i=1,2$. Then, for every real number $\epsilon>0$, there exists $t\in \mathfrak S$ such that $|t-t_v|_v<\epsilon$ in $k_v$ for all $v\in \Sigma$. In particular, $\mathfrak S$ is non-empty. 
\end{prop}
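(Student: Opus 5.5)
Consider the variety $Y$ over $\P^1_k$ (with coordinate $t$) given by the pair of equations (\ref{eq2}), or more precisely the fibred product over $\P^1_k$ of the two families of quadratic twists $\mathcal C_i\to\P^1_k$, with generic fibre $C_1^t\times C_2^t$. We will apply the fibration method to $Y\to\P^1_k$ (in the form of Harari's theorem, the ``well-known fibration theorem'' for families whose fibres over a dense open set are everywhere locally soluble-testable). The smooth fibres are $C_1^t\times C_2^t$, and the degenerate fibres lie over $t=0$ and $t=\infty$. Over each of these two points the fibre is split: it contains a multiplicity-one component which is geometrically integral after the quadratic extension $k(\sqrt{t})$. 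The standard form of the fibration method used here is: if the generic fibre is geometrically integral, the non-split fibres lie over $0$ and $\infty$, and every element of the vertical Brauer group $\Br_{\rm vert}(Y)$ comes from $\Br(k)$, then any adelic point of $Y$ gives rise, after approximation at $\Sigma$, to $t\in k^\times$ with $Y_t({\bf A}_k)\neq\emptyset$ and $t$ close to prescribed local values. This amounts to Harari's formal lemma combined with the fact that over $\P^1$ minus two points, which is a torus, strong approximation with Brauer--Manin conditions is elementary.

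Concretely, the steps are as follows.

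\textbf{Step 1: producing an adelic point on $Y$.} From $X({\bf A}_k)\neq\emptyset$, we first produce an adelic point on a smooth compactification $Y$. Since $X$ is smooth, points in $X(k_v)$ may be moved to the dense open subset where $y\,f_1(x_1)f_2(x_2)\neq0$. Such a point gives $t=f_1(x_1)^{-1}$ (up to squares) with both $tf_1(x_1)$ and $tf_2(x_2)=y^2/f_1(x_1)^2$ squares. So every local point yields a local point on a smooth fibre $Y_{t_v}$.

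\textbf{Step 2: prescribing the local values at $\Sigma$.} At places of $\Sigma$ we want instead the prescribed $t_v$. Since the hypothesis gives $C_i^{t_v}(k_v)\neq\emptyset$ directly, we simply use points of $Y_{t_v}(k_v)$ there.

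\textbf{Step 3: the vertical Brauer group.} We must show $\Br_{\rm vert}(Y)=\Br(k)$, or at least that it imposes no condition. Vertical classes are residues supported on $t=0,\infty$, that is, classes of the form $(t,c)$ with $c\in k^\times$, restricted from $\Br(k(t))$. The class $(t,c)$ is unramified on $Y$ exactly when $c$ becomes a square in the function field of each component of the fibres at $0$ and $\infty$. The relevant field at $t=0$ is the function field of $z_1^2=tf_1$, $z_2^2=tf_2$ reduced, which contains $k(x_1,x_2,\sqrt{f_1(x_1)f_2(x_2)})$, and similarly at $\infty$. So we need: if $c\in k^\times$ is a square in $k(x_1,x_2)(\sqrt{f_1f_2})$, then $c\in k^{\times2}$. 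Equivalently, $c\,f_1(x_1)f_2(x_2)$ is not a square unless $c$ is. This holds because $f_1$ and $f_2$ are separable of positive degree, hence not constant times squares. Should a finer analysis be needed, the hypotheses on $\Gal(L_i/k)$ and linear disjointness enter here. They guarantee that the components over the roots behave generically: the fibres of $\mathcal C_i\to\P^1$ have no further splitting, $k$ is algebraically closed in the relevant residue fields, and no constant class becomes unramified through the discriminant. This is the step I expect to be the main obstacle, specifically writing down a smooth model carefully enough to identify the fibres over $0$ and $\infty$.

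\textbf{Step 4: conclusion.} With vertical Brauer classes constant, the fibration theorem yields $t\in k^\times$, arbitrarily close to $t_v$ for $v\in\Sigma$, with $Y_t({\bf A}_k)\neq\emptyset$, that is, $t\in\mathfrak S$. The degenerate points only force the finite-dimensional data from Step 3, and Hilbert irreducibility is not needed because smoothness is open.
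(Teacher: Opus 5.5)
Your overall strategy is the paper's: realise the twists $C_1^t\times C_2^t$ as the fibres of a fibration over $\P^1_k$ that degenerates only over $t=0,\infty$, show the vertical Brauer group is constant, and apply the fibration theorem for the universal family of quadratic twists (\cite[Theorem A]{CTS00}). Steps 1, 2 and 4 are fine. There is, however, a genuine gap in Step 3, which is the only step with real content.

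\textbf{The component you test is the wrong one.} The component of the fibre over $t=0$ with function field $k(x_1,x_2,\sqrt{f_1f_2})$ is birational to $X$. It has multiplicity $2$: with $w=z_1z_2/t$ one has $w^2=f_1f_2$ and $t=z_1^2/f_1(x_1)$, so $t$ vanishes to order $2$ along $z_1=0$. The residue of $(t,c)$ there is $c^2$, which is trivial. So this component imposes no condition, and your criterion ``$c$ a square in $k(x_1,x_2,\sqrt{f_1f_2})$ implies $c\in k^{\times 2}$'' is true but irrelevant. For the same reason, your description of the fibres at $0$ and $\infty$ as split is wrong; they are non-split.

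\textbf{Where the condition actually comes from.} The multiplicity-one components lie over the fixed points of $(\iota_1,\iota_2)$, i.e.\ over $\Spec(l_1\otimes_k l_2)$ with $l_i=k[x]/(f_i)$. In the model $(\widetilde{C_1\times C_2}\times\A^1_s)/\mu_2$ with $t=s^2$, they are the exceptional divisors resolving the transversal $A_1$-singularities along the pointwise-fixed exceptional curves. Their constant field is $l_1\otimes_k l_2$, which is the field $l_1l_2$ by linear disjointness. So the hypothesis of the fibration theorem, via \cite[Proposition 11.5.2(i)]{CTS21}, is that
\[
k^\times/k^{\times 2}\to (l_1l_2)^\times/(l_1l_2)^{\times 2}
\]
is injective.

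\textbf{The hypotheses you never use.} This injectivity is exactly where the assumptions $\Gal(L_i/k)\simeq S_{2g_i+2}$ and linear disjointness enter. Your argument never uses them, which is a warning sign: the correct condition fails, for instance, if all roots of $f_1$ and $f_2$ lie in a quadratic field $k(\sqrt d)$ with $d\notin k^{\times 2}$. Then $d$ becomes a square in every factor of $l_1\otimes_k l_2$, and $(t,d)$ is unramified on the total space.

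\textbf{The missing computation.} By linear disjointness, $\Gal(L_1L_2/k)\simeq S_{2g_1+2}\times S_{2g_2+2}$. Hence the quadratic subfields of $L_1L_2$ are exactly $k(\sqrt{\Delta_1})$, $k(\sqrt{\Delta_2})$ and $k(\sqrt{\Delta_1\Delta_2})$. The field $l_1l_2$ is the fixed field of $S_{2g_1+1}\times S_{2g_2+1}$. This subgroup is not contained in the kernel of any of the three corresponding sign characters, because each $S_{2g_i+1}$ contains odd permutations. Without this, you have not verified that the vertical Brauer group is $\Br(k)$, and the approximation at $\Sigma$ does not follow.
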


\begin{proof}
The second assertion follows from the first by taking $\Sigma=\emptyset$. It remains to prove the first assertion. 

 The fixed locus $Z$ of the  involution $(\iota_1,\iota_2)$ on $C_1\times_kC_2$ is a finite \'etale closed subscheme of degree $(2g_1+2)(2g_2+2)$ over $k$. Define $Y$ as the blow-up of $C_1\times_k C_2$ at $Z$. Let $\X$ denote the quotient of $Y\times_k \G_{m,k}$ by the diagonal action of $\mu_2$ (acting by the extension of $(\iota_1,\iota_2)$ on the first factor, and by $x\mapsto -x$ on the second factor). The fibre of $\X$ over $t\in \G_m(k)$ is 
the quadratic twist of $Y$ by $t$. Let $\X\subset\tilde\X$ be a smooth compactification fitting into a commutative diagram
\[
\begin{tikzcd}
\X \arrow[r] \arrow[d] & \tilde{\X} \arrow[d] \\
\mathbb{G}_{m,k} \arrow[r] & \mathbb{P}^1_k
\end{tikzcd}
\]
The generic fibre of the projection $\mathcal{X}\to X$ is a $\mathbb{G}_m$-torsor, and hence in particular the threefold $\tilde\X$ is stably birational to $X$. It is clear that $X$ is the minimal desingularization of
$(C_1^t\times C_2^t)/(\iota_1, \iota_2)$ for any $t\in k^\times$,
where, by a slight abuse of notation, we denote by $\iota_i\colon C_i^t\to C_i^t$ the respective hyperelliptic involutions.

For $i=1,2$, let $l_i\coloneqq k[x]/(f_i(x))$, and fix an inclusion $l_i\subset L_i$. 

\begin{claim}\label{claim:vertical-brauer}
    The vertical Brauer group of $\tilde\X$ over $\P^1_k$ is the image of $\Br(k)$ in $\Br(\tilde\X)$.
\end{claim}

\begin{proof}[Proof of \Cref{claim:vertical-brauer}]
This will follow from \cite[Proposition 11.5.2(i)]{CTS21} once we have shown that the natural homomorphism $k^\times/k^{\times 2}\to (l_1l_2)^\times/(l_1l_2)^{\times 2}$ is injective.

Since $L_1$ and $L_2$ are linearly disjoint over $k$, we have
\[\Gal(L_1L_2/k)\simeq \Gal(L_1/k)\times\Gal(L_2/k)\simeq S_{2g_1+2}\times S_{2g_2+2}.\] 
The subfield $l_1l_2\subset L_1L_2$ is the fixed
field of $H=S_{2g_1+1}\times S_{2g_2+1}$, where $S_{2g_i+1}$ is the
stabilizer of a root of $f_i$.
The quadratic extensions of $k$ contained in $L_1L_2$ are 
$k(\sqrt{\Delta_1})$, $k(\sqrt{\Delta_2})$ and $k(\sqrt{\Delta_1\Delta_2})$,
where $\Delta_i\in k^\times$ is the discriminant of $f_i$ for $i=1,2$. 
It suffices to show that none of these
quadratic extensions of $k$ is contained in $l_1l_2$. 
Equivalently, it is enough to show 
that 
$H$ is not contained in the kernel of the corresponding quadratic
character of $S_{2g_1+2}\times S_{2g_2+2}$. This is indeed impossible, because $2g_i+2\geq 4$ and hence each
$S_{2g_i+1}$ contains odd permutations.
\end{proof}

Since $X$ is stably birational to $\X$, solubility of $X$ everywhere locally is equivalent to the solubility of $\X$ everywhere locally. By \Cref{claim:vertical-brauer}, we may apply the fibration theorem for the universal family of quadratic twists
\cite[Theorem A]{CTS00} (see also \cite[Corollary 14.2.19]{CTS21}) to find a $t\in\mathfrak S$ arbitrarily close to $t_v$ for $v\in \Sigma$.
\end{proof}

\section{Preliminaries on Jacobians of hyperelliptic curves}
\label{S3}

\subsection{Theta characteristics and the Poonen--Stoll class}

\begin{defin}\label{defin:classes}
Let $C$ be a smooth, projective, geometrically integral curve over a field $k$, of
positive genus $g$. Let $J=\operatorname{Pic}^0_{C/k}$ be the Jacobian of $C$, and let
$\lambda$ be the canonical principal polarization of $J$.

\begin{enumerate}
    \item Let $\mathfrak c\coloneqq [\Theta_C]\in H^1(k,J[2])$, where $\Theta_C\subset \on{Pic}^{g-1}(C_{\bar k})$ is the $J[2]$-torsor defined by $\Theta_C\coloneqq
\{L\in \operatorname{Pic}^{g-1}(C_{\bar k}): L^{\otimes 2}\simeq \omega_C\}$; see \cite[Definition~3.8]{PR11}.
\item Let $\mathfrak c_\lambda\in H^1(k,J^\vee[2])$ be the class of the $J^\vee[2]$-torsor  of symmetric line bundles on $J$ inducing the polarization $\lambda$; see \cite[(17)]{PR11}. 
\item Let $c_\lambda\in H^1(k,J^\vee)$ be the class associated to $\lambda$ by Poonen--Stoll \cite[\S~4]{PS}.
\item Let $c\in H^1(k,J)$ be the class of the $J$-torsor $\operatorname{Pic}^{g-1}_{C/k}$.
\end{enumerate}
\end{defin}

\begin{prop}
Let $C$ be a smooth, projective, geometrically integral curve over a number field $k$, of
positive genus $g$. Let $J=\operatorname{Pic}^0_{C/k}$ be the Jacobian of $C$, and let $\lambda\colon J\to J^\vee$ be the canonical principal polarization. Then 
    \[c_\lambda\in \Sha(J^\vee)[2],\quad c\in \Sha(J)[2],\quad \mathfrak{c}_\lambda\in \on{Sel}^2(J^\vee/k),\quad \mathfrak c\in \on{Sel}^2(J/k).\] Moreover, the four classes fit into the following commutative diagram:
\begin{equation}\label{eq:four-classes}
\begin{tikzcd}
\on{Sel}^2(J/k) \arrow[r,"\lambda_*"] \arrow[d] &
\on{Sel}^2(J^\vee/k) \arrow[d] \\
\Sha(J)[2] \arrow[r,"\lambda_*"] &
\Sha(J^\vee)[2],
\end{tikzcd}
\qquad
\begin{tikzcd}
\mathfrak c \arrow[r,mapsto,"\lambda_*"] \arrow[d,mapsto] &
\mathfrak c_\lambda \arrow[d,mapsto] \\
c \arrow[r,mapsto,"\lambda_*"] &
c_\lambda .
\end{tikzcd}
\end{equation}
\end{prop}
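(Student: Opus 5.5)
The plan is to assemble the statement from results of Poonen--Stoll \cite{PS} and Poonen--Rains \cite{PR11}, checking the compatibilities by hand where needed. The left-hand diagram in \eqref{eq:four-classes} commutes for any isogeny. Indeed, $\lambda$ induces $J[2]\to J^\vee[2]$ and $J\to J^\vee$, compatible with the Kummer sequences for multiplication by $2$. Hence $\lambda_*$ maps $\Sel^2(J/k)$ to $\Sel^2(J^\vee/k)$ and $\Sha(J)[2]$ to $\Sha(J^\vee)[2]$, compatibly with the maps $\Sel^2\to\Sha[2]$. So the substance is the membership claims and the assignments on the right.

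\textbf{Step 1: $\lambda_*\mathfrak c=\mathfrak c_\lambda$.} A theta characteristic $L\in\Theta_C$ gives a symmetric theta divisor $W_{g-1}-L$ on $J$, and its line bundle induces $\lambda$ by Riemann's theorem. The map $\Theta_C\to\{\text{symmetric bundles inducing }\lambda\}$ is Galois-equivariant. It is compatible with the actions of $J[2]$ and $J^\vee[2]$ via $\lambda$, because translating the divisor by $a\in J[2]$ changes its bundle by $\lambda(a)$. A morphism of torsors that is equivariant along $\lambda\colon J[2]\to J^\vee[2]$ gives $\lambda_*[\Theta_C]=[\text{target torsor}]$. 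This is \cite[Prop.~3.9]{PR11}, which I will cite.

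\textbf{Step 2: images in $H^1(k,J)$ and $H^1(k,J^\vee)$.} The torsor $\Theta_C\subset\Pic^{g-1}_{C/k}(\bar k)$ is the fibre over the rational point $[\omega_C]$ of the doubling map $\Pic^{g-1}\to\Pic^{2g-2}$, $L\mapsto L^{2}$. Pushing $\Theta_C$ out along $J[2]\hookrightarrow J$ therefore gives $\Pic^{g-1}_{C/k}$, so $\mathfrak c\mapsto c$. Similarly, Poonen--Stoll define $c_\lambda$ as the class of the $J^\vee$-torsor of line bundles $\mathcal L$ on $J$ with $\phi_{\mathcal L}=\lambda$. This torsor contains the symmetric ones as a $J^\vee[2]$-subtorsor, so $\mathfrak c_\lambda\mapsto c_\lambda$. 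The square for $c\mapsto c_\lambda$ then follows from Step 1 and the left diagram, with no separate check needed.

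\textbf{Step 3: local triviality, the main point.} Here I must show $\mathfrak c$ is locally a coboundary in $H^1(k_v,J)$, that is, $\mathfrak c\in\Sel^2$. It suffices to show $c_v=0$, i.e.\ $\Pic^{g-1}_{C/k}(k_v)\neq\emptyset$, for every $v$. Then $c\in\Sha(J)[2]$ (it is $2$-torsion as the image of a class in $H^1(k,J[2])$), and $\mathfrak c\in\Sel^2(J/k)$ by definition. Everything for $J^\vee$ then follows by applying $\lambda_*$. For the local statement, over any local field the Brauer obstruction to $\Pic^{d}_{C/k_v}(k_v)$ coming from a divisor class is controlled by the period and index. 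The class $[\omega_C]\in\Pic^{2g-2}(k_v)$ is represented by a divisor. By the Lichtenbaum/Roquette results cited in \cite[Section~4]{PS}, the $J$-torsor $\Pic^{g-1}$ over a local field is trivial, and Poonen--Stoll prove exactly $c_\lambda\in\Sha(J^\vee)$ \cite[Cor.~2, Thm.~5]{PS}. I will cite \cite{PS} for $c\in\Sha(J)$ directly, as this is the step with real content, namely local duality plus Lichtenbaum's theorem that $\Pic^{g-1}$ is locally soluble.
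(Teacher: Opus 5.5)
Your proposal is correct and is essentially the paper's argument. You cite \cite{PR11} for $\lambda_*\mathfrak c=\mathfrak c_\lambda$, use the torsor-inclusion compatibilities, and rely on Poonen--Stoll for local triviality. The only reordering is this: you check $\mathfrak c\mapsto c$ directly and deduce $\lambda_*c=c_\lambda$, whereas the paper takes $\lambda_*c=c_\lambda$ from \cite[Corollary~4]{PS} and $\mathfrak c_\lambda\mapsto c_\lambda$ from \cite[Remark~3.3]{PR11}, and deduces $\mathfrak c\mapsto c$.

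One caution on Step 3. Your heuristic about the Brauer obstruction and the rational divisor $[\omega_C]$ does not by itself prove that $\Pic^{g-1}_{C/k}(k_v)\neq\emptyset$: it concerns $\Pic(C_{k_v})$ versus $\Pic_{C/k}(k_v)$, and only gives a period dividing $2g-2$. The local statement therefore rests entirely on citing \cite[Corollary~2]{PS} together with $\lambda_*c=c_\lambda$ and $\lambda$ being an isomorphism, exactly as in the paper.
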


\begin{proof}
    We have $\mathfrak c_\lambda=\lambda_*(\mathfrak c)$ by \cite[Theorem~3.9]{PR11}, and $c_\lambda=\lambda_*(c)$ by \cite[Corollary~4]{PS}. By \cite[Remark~3.3]{PR11}, the natural map $H^1(k,J^\vee[2])\to H^1(k,J^\vee)$ sends $\mathfrak c_\lambda$ to $c_\lambda$. It follows that the natural map $H^1(k,J[2])\to H^1(k,J)$ sends $\mathfrak c$ to $c$. 
    Finally, by \cite[Corollary~2]{PS} we have $c_\lambda\in \Sha(J^\vee)[2]$, and the other assertions follow from the commutativity of \eqref{eq:four-classes}.
\end{proof}

For an abelian variety $A$ over a number field $k$ we have the Cassels--Tate pairing
$$\Sha(A)\times\Sha(A^\vee)\to\Q/\Z,$$
see \cite[Proposition I.6.9]{ADT}.
When $J$ is the Jacobian of a curve with its canonical principal polarization
$\lambda\colon J\xrightarrow{\sim} J^\vee$, we denote by $\langle x,y\rangle_J$, for $x,y\in\Sha(J)$, the Cassels--Tate pairing of $x$ and $\lambda_*(y)$.

\begin{prop}\label{prop:ps-class}
Let $k$ be a number field, let $C:y^2=f(x)$ be a hyperelliptic curve of  
genus $g\geq 2$, where $f(x)\in k[x]$ is separable of degree $2g+2$, and let $J$ be
the Jacobian of $C$. Let $\mathcal R$ be the set of roots of $f$ in $\bar k$. Let $\mathfrak c\in \Sel^2(J/k)$ be the class of \eqref{eq:four-classes}.

\begin{enumerate}
\item There is a short exact sequence of $\Gamma_k$-modules
\[
0\longrightarrow J[2]\longrightarrow
\F_2^{\mathcal R}/\langle {\mathcal R}\rangle
\longrightarrow \F_2\longrightarrow 0,
\]
where the third map sends a subset of $\mathcal R$ to its
cardinality modulo $2$.
Let $\w$ be the image of
$1\in H^0(k,\F_2)$ under the connecting homomorphism $H^0(k,\F_2)\to H^1(k,J[2])$. 
\item If $g$ is even, then $\c=\w$.
If $g$ is odd, then $\c=0$.
\item The splitting field of $f$ over $k$ is equal to $k(J[2])$.
\item The natural map $H^1(k,J[2])\to H^1(k,J)$ sends $\w$ to $[\operatorname{Pic}^1_{C/k}]$.
\item After identifying $\frac12\Z/\Z$ with $\F_2$, we have
\[
\langle \mathfrak c,\mathfrak c\rangle_J \equiv
\dim_{\F_2}\Sha(J)_{\rm nd}[2]
\equiv
\dim_{\F_2}\Sel^2(J/k)+\rk_2(J)+\dim_{\F_2}J(k)[2]
\pmod 2.
\]
\item After identifying $\frac12\Z/\Z$ with $\F_2$, we have $\langle \mathfrak c,\mathfrak c\rangle_J \equiv N\pmod 2$, where $N$ is the number of places $v$ of $k$ such that $\on{Pic}^{g-1}(C_{k_v})=\emptyset$. In particular, if $C(k_v)\neq\emptyset$ for every place $v$ of $k$, then $\langle \mathfrak c,\mathfrak c\rangle_J=0$ in $\F_2$.
\item For every $t\in k^\times$, let $C^t:y^2=t f(x)$ be the quadratic twist
of $C$, let $J^t$ be its Jacobian, and let $\mathfrak c_t\in \Sel^2(J^t/k)\subset H^1(k,J^t[2])$ be the class of \eqref{eq:four-classes} for $J^t$. Under the canonical identification
$J^t[2]\simeq J[2]$, the class $\mathfrak c_t$ is identified with $\mathfrak c$.
\end{enumerate}
\end{prop}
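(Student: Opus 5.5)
The plan is to reduce everything to the standard explicit description of $J[2]$ and of its torsors in terms of the set $\mathcal R$ of Weierstrass points, and to cite Poonen--Stoll for the two statements about the Cassels--Tate pairing, namely (5) and (6).

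For each $r\in\mathcal R$ let $P_r\in C(\bar k)$ be the Weierstrass point $(r,0)$, and let $D$ be the hyperelliptic divisor class (the $g^1_2$). Then $2P_r\sim D$ for every $r$, $\sum_{r\in\mathcal R}P_r\sim(g+1)D$, and $\omega_C\simeq(g-1)D$.

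For (1) and (3), I will use the classical fact that the classes $P_r-P_s$ generate $J[2]$. The map sending an even subset $T\subset\mathcal R$ to $\sum_{r\in T}P_r-\frac{|T|}{2}D$ induces a $\Gamma_k$-equivariant isomorphism from the even subsets modulo $\langle\mathcal R\rangle$ onto $J[2]$. Since $\deg f=2g+2$ is even, $\mathcal R$ is itself an even subset, so the parity map $\F_2^{\mathcal R}/\langle\mathcal R\rangle\to\F_2$ is well defined; its kernel is the group of even subsets, which gives the exact sequence. Injectivity follows from the count $2^{2g+1}/2=2^{2g}$ together with the standard fact that the only relation among the $P_r$ in this sense is the one coming from $\mathcal R$ itself. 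Part (3) follows because $\Gamma_k$ acts on $J[2]$ through its action on $\mathcal R$, and this action is faithful: a transposition of two roots moves the class $P_r-P_s+P_u-P_w$ for suitable $r,s,u,w$, using $2g+2\ge 6$.

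For (4), the connecting class $\w$ is the class of the $J[2]$-torsor $\mathcal T$ of odd subsets modulo complement. I will define an equivariant map $\mathcal T\to\Pic^1_{C}(\bar k)$ by $T\mapsto\sum_{r\in T}P_r-\frac{|T|-1}{2}D$. This map is compatible with the $J[2]$-actions, so it exhibits $\Pic^1_{C/k}$ as the pushout of $\mathcal T$ along $J[2]\to J$.

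For (2), I will argue in the same way with $\Theta_C$. Every theta characteristic has the form $\sum_{r\in T}P_r+mD$ with $|T|+2m=g-1$. The assignment $T\mapsto\sum_{r\in T}P_r+\frac{g-1-|T|}{2}D$, for subsets $T$ with $|T|\equiv g+1\pmod 2$ taken modulo complement, is a $J[2]$-equivariant bijection onto $\Theta_C$; both sides have $2^{2g}$ elements, and the map is a map of torsors. Hence $\Theta_C$ is isomorphic to the torsor of even subsets when $g$ is odd, so $\c=0$, and to the torsor of odd subsets when $g$ is even, so $\c=\w$. This is the step I regard as the main obstacle, since one has to check carefully that the map is well defined and bijective (the only relation being $\sum_{r\in\mathcal R}P_r\sim(g+1)D$, via Riemann--Roch on the hyperelliptic curve). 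It may be easier to cite Poonen--Rains \cite{PR11} or Poonen--Stoll \cite{PS}, where this description appears.

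For (5), I will use \cite[Theorem~8]{PS}: $\langle c_\lambda,c_\lambda\rangle$, which equals $\langle\c,\c\rangle_J$ by the commutative diagram \eqref{eq:four-classes}, detects the failure of the pairing on $\Sha(J)_{\rm nd}$ to be alternating. This is equivalent to the parity of $\dim_{\F_2}\Sha(J)_{\rm nd}[2]$. The second congruence comes from the exact sequence $0\to J(k)/2\to\Sel^2\to\Sha[2]\to0$. One has $\dim J(k)/2=\rk_2(J)+\dim J(k)[2]$ up to the contribution of the divisible part of $\Sha[2^\infty]$, which has even corank parity relative to $\rk_2$ by definition of the $2^\infty$-Selmer rank. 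Moreover, $\Sha[2]$ and $\Sha_{\rm nd}[2]$ differ by the $2$-torsion of the divisible part, which is of the form $(\Q_2/\Z_2)^{s}$ with $s=\rk_2-\rk$; this gives the stated congruence modulo $2$.

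For (6), I will cite the Poonen--Stoll formula \cite[Theorem~11, Corollary~12]{PS}, which expresses $\langle c_\lambda,c_\lambda\rangle$ as the sum of local terms equal to $\frac12$ exactly at the places $v$ where $\Pic^{g-1}(C_{k_v})=\emptyset$. If $C(k_v)\ne\emptyset$, then $\Pic^{g-1}(C_{k_v})$ contains $(g-1)P$ for any $P\in C(k_v)$, so it is non-empty.

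For (7), the isomorphism $C^t\simeq C$ over $k(\sqrt t)$ is given by $y\mapsto\sqrt t\,y$. It fixes the Weierstrass points and commutes with $\iota$, so the induced identification $J^t[2]\simeq J[2]$ is $\Gamma_k$-equivariant. Under it, the explicit descriptions in (1) and (2) depend only on $\mathcal R$, which is the same set for $f$ and $tf$. Therefore $\c_t$ corresponds to $\c$.
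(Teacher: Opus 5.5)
Your proposal is correct. For (3)--(7) it follows the paper's proof. In (3) you use the same faithfulness argument. In (4) you use the same identification of $\w$ with the $J[2]$-torsor $[2]^{-1}(\eta)\subset\Pic^1_{C/k}$: your map $T\mapsto\sum_{r\in T}P_r-\frac{|T|-1}{2}D$ lands exactly in the classes whose double is $D$. For (5) and (6) you cite the same results of \cite{PS}, and in (7) you use the same observation that everything depends only on $\mathcal R$.

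The real difference is in (1) and (2). The paper simply cites \cite{Mor25} there, whereas you verify both directly with Weierstrass-point divisors, which makes the argument self-contained. What you call the main obstacle in (2) is not one. Any $J[2]$-equivariant map between $J[2]$-torsors is automatically bijective. So you only need three checks on $T\mapsto\sum_{r\in T}P_r+\frac{g-1-|T|}{2}D$: it is well defined modulo complements, it lands in $\Theta_C$, and it is equivariant for symmetric difference. Each is a one-line computation from $2P_r\sim D$ and $\sum_{r\in\mathcal R}P_r\sim(g+1)D$. The claim that every theta characteristic has this shape is not needed.

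There are two small corrections.

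In (3), showing that transpositions act nontrivially does not prove faithfulness. The kernel of $\operatorname{Sym}(\mathcal R)\to\operatorname{GL}(J[2])$ is normal, so a priori it could be the alternating group. Argue instead as follows: for $\sigma\neq 1$, pick $r$ with $\sigma(r)\neq r$ and $s\notin\{r,\sigma(r)\}$. Then $\sigma\{r,s\}$ is neither $\{r,s\}$ nor its complement, because $|\mathcal R|\geq 6$.

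In (5), the bookkeeping is exact rather than ``up to'' a divisible contribution. You have $\dim J(k)/2J(k)=\rk(J)+\dim J(k)[2]$ and $\dim\Sha(J)[2]=\dim\Sha(J)_{\rm nd}[2]+(\rk_2(J)-\rk(J))$. Hence $\dim\Sel^2(J/k)=\rk_2(J)+\dim J(k)[2]+\dim\Sha(J)_{\rm nd}[2]$, which gives the stated congruence.
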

\begin{proof}
\textup{(1)} See \cite[(8.3)]{Mor25}.

\textup{(2)} See \cite[Proposition~8.5]{Mor25}.

\textup{(3)} The sequence of (1) shows that $k(J[2])$ is contained in the splitting field of $f$. Conversely, since $2g+2\geq 6$, the natural action of the symmetric group $\on{Sym}(\mc{R})$ on $\on{Ker}(\F_2^{\mathcal R}/\langle {\mathcal R}\rangle\to \F_2)$ is faithful. Thus, by (1), any element of $\Gamma_k$ acting trivially on $J[2]$ acts trivially on $\mathcal R$. Therefore $k(J[2])$ contains the splitting field of $f$.

\textup{(4)} Let $\eta\in \operatorname{Pic}^2_{C/k}(k)$ be the class of a fibre of the hyperelliptic map $C\to\mathbb P^1_k$. 
Then $\w$ is the class of the
torsor under $J[2]$ which is the
preimage of $\eta$ under multiplication by 2 on $\Pic_{C/k}$.
Since $\deg(\eta)=2$, this torsor
is naturally a closed subset of 
$\Pic^1_{C/k}$, and this embedding
is compatible with the natural
injection $J[2]\to J$.
Thus the image of $\w$ in $H^1(k,J)$
is $[\Pic^1_{C/k}]$.

\textup{(5)} By \cite[Theorem~8]{PS}, we have $\langle \mathfrak c,\mathfrak c\rangle_J
\equiv
\dim_{\F_2}\Sha(J)_{\rm nd}[2]$, where $\Sha(J)_{\rm nd}$ is the quotient of $\Sha(J)$ by its maximal divisible subgroup. The conclusion follows 
from the exact sequence
\[
0\longrightarrow J(k)/2J(k)\longrightarrow \Sel^2(J/k)\longrightarrow \Sha(J)[2]\longrightarrow 0
\]
and the definition of $\rk_2(J)$.

\textup{(6)} See \cite[Corollary~12]{PS}.

\textup{(7)} The set of roots of $tf(x)$ coincides with the set of roots $\mc{R}$ of $f(x)$. We obtain a commutative diagram
\[
\begin{tikzcd}
0 \arrow[r] &
J[2] \arrow[r] \arrow[d, "\wr"] &
\F_2^{\mathcal R}/\langle \mathcal R\rangle
\arrow[r] \arrow[d, equal] &
\F_2 \arrow[r] \arrow[d, equal] &
0
\\
0 \arrow[r] &
J^t[2] \arrow[r] &
\F_2^{\mathcal R}/\langle \mathcal R\rangle
\arrow[r] &
\F_2 \arrow[r] &
0
\end{tikzcd}
\]
where the rows are the short exact sequences of \textup{(1)} for $J$ and $J^t$, and where the left vertical arrow is the unique map making the left square commute. The fact that the isomorphism $H^1(k,J[2])\to H^1(k,J^t[2])$ sends $\mathfrak c$ to $\mathfrak c_t$ now follows from (2).
\end{proof}

\subsection{The case of maximal Galois image}

\begin{prop}\label{lem:maximal-galois-image}
Let $C:y^2=f(x)$ be a hyperelliptic curve over a number field $k$ of
 genus $g\geq 2$, where $f\in k[x]$ is separable of degree
$2g+2$, and let $J$ be the Jacobian of $C$. Suppose that $G\coloneqq \operatorname{Gal}(f)\simeq S_{2g+2}$. 

\begin{enumerate}
\item We have $J(k)[2]=0$.
\item The $G$-module $J[2]$ is simple, and
$\operatorname{End}_G(J[2])=\F_2$.
\item We have $H^1(G,J[2])\simeq \F_2$, and the inflation map $H^1(G,J[2])\to H^1(k,J[2])$ sends the non-zero element to the class $\w$ of Proposition
\ref{prop:ps-class}(1). Consequently, if 
$C(\mathbf A_k)\neq\emptyset$, then
\[\Sel^2(J/k)\cap H^1(G,J[2])=\langle\w\rangle\quad\text{inside } \ H^1(k,J[2]).\]

\end{enumerate}
\end{prop}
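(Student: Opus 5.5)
The plan is to work entirely with the explicit model of $J[2]$ from Proposition~\ref{prop:ps-class}(1). Put $n=2g+2\geq 6$ and $P=\F_2^{\mathcal R}/\langle\mathcal R\rangle$. Then $J[2]=P^0$, the image of the even-cardinality subsets, and $G=S_n$ acts through its permutation action on $\mathcal R$.

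For (1) and (2) I would use the standard facts about the even-subset module of $S_n$. Since $n$ is even, $P^0$ is the quotient of the sum-zero submodule $E=\{v\in\F_2^{\mathcal R}:\sum v_r=0\}$ by the line $\langle\mathcal R\rangle$. It is the irreducible $\F_2S_n$-module of dimension $n-2$, which is the heart of the permutation module.

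To show simplicity directly, take a non-zero submodule $M\subset P^0$. Choose $0\neq m\in M$, represented by an even subset $T$ with $T\neq\emptyset,\mathcal R$. Since $n\geq 6$, there is a transposition $\tau$ swapping an element of $T$ with an element outside $T$. Then $m+\tau m$ is the class of a $2$-element subset. Taking the $S_n$-orbit, $M$ contains all classes of pairs, and these span $P^0$.

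Fixed points then follow. A non-zero fixed vector would be the class of an $S_n$-invariant even subset, and the only such subsets are $\emptyset$ and $\mathcal R$, both zero in $P$. This gives $J(k)[2]=J[2]^{\Gamma_k}=(P^0)^G=0$, which is (1).

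For $\operatorname{End}_G(J[2])=\F_2$, I would argue as follows. Because $J[2]$ is simple, the endomorphism ring is a finite division algebra, so a field $\F_{2^r}$. Restricting to the subgroup $S_{n-2}\times S_2$, the subspace of $J[2]$ fixed by $S_{n-2}\times S_2$ is one-dimensional, spanned by the pair class; again this holds because $n\geq 6$. Any endomorphism preserves this subspace, so it acts on it by a scalar in $\F_2$. Since the orbit of the pair class generates $J[2]$, the endomorphism is that scalar, proving (2).

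For (3), I compute $H^1(G,P^0)$ from the long exact sequence of $0\to P^0\to P\to\F_2\to 0$. Since $P^G=0$ (the only invariant subsets are $\emptyset$ and $\mathcal R$), we get
\[0\to \F_2\xrightarrow{\delta} H^1(G,P^0)\to H^1(G,P)\to H^1(G,\F_2).\]
To compute $H^1(G,P)$, I use $0\to\F_2\to\F_2^{\mathcal R}\to P\to0$, where $\F_2$ is embedded diagonally, together with Shapiro's lemma: $H^i(G,\F_2^{\mathcal R})=H^i(S_{n-1},\F_2)$. The restriction $H^1(S_n,\F_2)\to H^1(S_{n-1},\F_2)$ is an isomorphism (sign to sign), and the corresponding $H^2$ map is injective. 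Hence $H^1(G,P)\hookrightarrow H^2(G,\F_2)\to H^2(S_{n-1},\F_2)$ gives $H^1(G,P)=0$. Therefore $H^1(G,J[2])\simeq\F_2$, generated by $\delta(1)$.

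Inflation commutes with connecting maps, so $\delta(1)$ inflates to $\w$. Inflation from $H^1(G,J[2])$ is injective because $J[2]$ is fixed by the kernel of $\Gamma_k\to G$, using Proposition~\ref{prop:ps-class}(3).

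For the Selmer statement, $\w$ maps to $[\operatorname{Pic}^1_{C/k}]$ by Proposition~\ref{prop:ps-class}(4). When $C(\mathbf A_k)\neq\emptyset$, each $\operatorname{Pic}^1_{C/k}$ has a $k_v$-point, so this class is locally trivial and $\w\in\Sel^2(J/k)$. Together with $H^1(G,J[2])=\{0,\w\}$ this gives $\Sel^2(J/k)\cap H^1(G,J[2])=\langle\w\rangle$.

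The main obstacle is the injectivity of the restriction $H^2(S_n,\F_2)\to H^2(S_{n-1},\F_2)$. I would handle it with the known description of $H^*(S_n,\F_2)$ in low degrees: $H^2$ is spanned by $\mathrm{sgn}^2$ and the classes of the two double covers. This group has dimension $2$ for $n\geq4$, and it restricts isomorphically for $n-1\geq4$, which holds since $n\geq 6$. If needed, I would instead verify injectivity on $H^1(G,P^0)$ by restricting to a Sylow $2$-subgroup.
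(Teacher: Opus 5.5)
Your proof is correct, but it takes a different route from the paper. The paper proves almost nothing here directly. Part (1) is declared immediate from the model in Proposition~\ref{prop:ps-class}(1). Part (2) is quoted from \cite[Lemma~8.2]{Mor25}. Both $H^1(G,J[2])\simeq\F_2$ and the fact that inflation sends its generator to $\w$ are quoted from \cite[Proposition~8.4]{Mor25}. Only the final Selmer statement is argued, via Proposition~\ref{prop:ps-class}(4), exactly as you do.

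You instead make everything self-contained inside the even-subset model:
\begin{itemize}
\item simplicity by the transposition trick;
\item $\operatorname{End}_G(J[2])=\F_2$ via the one-dimensional fixed space of $S_2\times S_{n-2}$, together with generation by the orbit of a pair class;
\item $H^1$ via the two long exact sequences and Shapiro's lemma, which reduces the computation to injectivity of $H^2(S_n,\F_2)\to H^2(S_{n-1},\F_2)$;
\item compatibility of inflation with connecting homomorphisms, which identifies the generator with $\w$.
\end{itemize}
Your route shows exactly where $n\geq 6$ (that is, $g\geq 2$) is used, and it does not depend on Morgan's paper. It still rests on one external input, the low-degree cohomology (Schur multiplier) of symmetric groups. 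Your description of that input is accurate: $H^2(S_n,\F_2)\simeq\F_2^2$ for $n\geq 4$, and restriction is an isomorphism once $n-1\geq 4$.

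One step should be tightened. A $G$-fixed vector of $P$ is a class $[T]$ with $\sigma T\in\{T,\mathcal R\setminus T\}$ for all $\sigma$. It is not a priori the class of an $S_n$-invariant subset. The fix is one line: the stabilizer of $T$ has index at most $2$ in $S_n$, so it contains $A_n$. Since $A_n$ is transitive on $\mathcal R$, this forces $T\in\{\emptyset,\mathcal R\}$, which gives both $(P^0)^G=0$ and $P^G=0$.

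The same care is needed for the $S_2\times S_{n-2}$-fixed space. There the index-$\leq 2$ subgroup contains $1\times A_{n-2}$, which is transitive on the complement of the pair. Hence $T$ is a union of a subset of the pair and possibly that complement. The complement case $|T|=n/2$ is then impossible for $n\geq 6$.
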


\begin{proof}
(1) Immediate from \Cref{prop:ps-class}(1).

(2) See \cite[Lemma 8.2]{Mor25}.

(3) The first statement is \cite[Proposition 8.4]{Mor25}. By Proposition
\ref{prop:ps-class}(4), if 
$C(\mathbf A_k)\neq\emptyset$, then $\w\in\Sel^2(J/k)$.
\end{proof}

\begin{prop}\label{lem:maximal-galois-image-new}
Let $C:y^2=f(x)$ be a hyperelliptic curve over a number field $k$ of
genus $g\geq 2$, where $f\in k[x]$ is separable of degree
$2g+2$, and let $J$ be the Jacobian of $C$. Suppose that $G\coloneqq \operatorname{Gal}(f)\simeq S_{2g+2}$. 

\begin{enumerate}
\item For every cycle $h\in G\simeq S_{2g+2}$ of length $2g+1$, we have $J[2]^h=0$. 
    \item If $v$ is a prime of good reduction for $C$, unramified in the splitting field of $f$, and if $\on{Frob}_v$ acts as a cycle of length $2g+1$ on the roots of $f$, then $f$ has a root in $k_v$, and consequently every quadratic twist of $C$ has a $k_v$-point.
\end{enumerate}
\end{prop}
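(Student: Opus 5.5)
For part (1), I will use the explicit description of $J[2]$ from \Cref{prop:ps-class}(1). Set $n=2g+2$, so that $J[2]$ is the subspace of even-cardinality classes in $\F_2^{\mathcal R}/\langle\mathcal R\rangle$. Let $h$ be a $(2g+1)$-cycle; it fixes exactly one root $r_0$ and permutes the other $2g+1$ roots transitively.

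First I describe the $h$-invariant subsets of $\mathcal R$. The $h$-orbits on $\mathcal R$ are $\{r_0\}$ and $\mathcal R\setminus\{r_0\}$, so the $h$-invariant subsets are $\emptyset$, $\{r_0\}$, $\mathcal R\setminus\{r_0\}$ and $\mathcal R$. Hence $(\F_2^{\mathcal R})^h$ is spanned by $\{r_0\}$ and $\mathcal R$.

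Next I compute invariants of the quotient using the sequence $0\to\langle\mathcal R\rangle\to\F_2^{\mathcal R}\to\F_2^{\mathcal R}/\langle\mathcal R\rangle\to 0$. Taking $h$-invariants gives $0\to\F_2\to(\F_2^{\mathcal R})^h\to(\F_2^{\mathcal R}/\langle\mathcal R\rangle)^h\to H^1(\langle h\rangle,\F_2)$. Since $h$ has odd order $2g+1$, the group $H^1(\langle h\rangle,\F_2)=\operatorname{Hom}(\Z/(2g+1),\F_2)$ vanishes. Therefore $(\F_2^{\mathcal R}/\langle\mathcal R\rangle)^h$ is one-dimensional, spanned by the class of $\{r_0\}$. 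This class has odd cardinality, so it does not lie in $J[2]$, and hence $J[2]^h=0$.

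For part (2), write $p$ for the residue characteristic of $v$; it is odd, since the hypotheses on a good-reduction prime in this context include $v\nmid 2$. By assumption $v$ is unramified in the splitting field of $f$ and $\operatorname{Frob}_v$ acts as a $(2g+1)$-cycle on $\mathcal R$. Then $\operatorname{Frob}_v$ fixes a root $r_0$, so the decomposition group fixes $r_0$ and $r_0\in k_v$. Concretely, the factorization type of $f$ over $k_v$ corresponds to the cycle type of Frobenius, so $f$ has a linear factor over $k_v$. The point $(r_0,0)$ is a Weierstrass point lying on $y^2=tf(x)$ for every $t\in k_v^\times$, so every quadratic twist $C^t$ has a $k_v$-point.

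I expect no serious obstacle. The only care needed is to keep the leading coefficient of $f$ in view, which is irrelevant for the roots, and to make sure the quotient by $\langle\mathcal R\rangle$ creates no extra invariants; the odd order of $h$ handles the latter.
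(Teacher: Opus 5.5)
Your proof is correct. Part (1) is the paper's argument with the steps in a different order, and part (2) takes a different route.

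In part (1), the paper first intersects $(\F_2^{\mathcal R})^h=\langle e_1,e_2\rangle$ with the kernel of the augmentation map, which leaves only $\mathcal R$, and then invokes $H^1(\Z/(2g+1),\Z/2)=0$. You use the vanishing of $H^1$ first, to identify $(\F_2^{\mathcal R}/\langle\mathcal R\rangle)^h$ with the line spanned by the class of $\{r_0\}$, and then note that this class has odd cardinality. The two arguments are equivalent.

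In part (2), the paper reduces modulo $v$ and observes that Frobenius fixes a root of $\bar f$. It then uses good reduction to see that this root is simple and lifts it to $k_v$ by Hensel's lemma. You argue instead through the decomposition group. Take a prime $w$ of the splitting field $L$ of $f$ above $v$. Since $v$ is unramified in $L$, $\Gal(L_w/k_v)$ is generated by Frobenius, so a Frobenius-fixed root lies in $k_v$. Your version needs only unramifiedness. The paper's version additionally uses good reduction of the model $y^2=f(x)$ at $v$, so that roots of $f$ correspond to distinct roots of $\bar f$. It is more hands-on but proves nothing extra here.

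Your remark that the residue characteristic of $v$ is odd does not follow from the stated hypotheses. It plays no role in your argument, so you can drop it.
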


\begin{proof}
    (1) We use the identification of $J[2]$ given in \Cref{prop:ps-class}(1). The $h$-fixed subspace $(\F_2^{\mathcal R})^h$ has basis $e_1,e_2$, where $e_1$ is the fixed point and $e_2$ is the sum of the elements in the cycle of length $2g+1$. The  coordinate sum of $a_1 e_1+a_2 e_2$ is $a_1+(2g+1)a_2=a_1+a_2$. Hence the 
    intersection of $(\F_2^{\mathcal R})^h$ and the kernel of the augmentation map $\F_2^{\mathcal R}\to\F_2$ is generated by $e_1+e_2=\mathcal R$. Since $2g+1$ is odd, we have
    $H^1(\Z/(2g+1),\Z/2)=0$, which implies that $J[2]^h=0$.

(2) Since $\on{Frob}_v$ acts as a cycle of length $2g+1$, it fixes one root of the reduction of $f$ modulo $v$. Since $v$ is a place of good reduction, this root is simple. By Hensel's lemma, we may lift this root to a root $\alpha\in k_v$ of $f$ over $k_v$. For any quadratic twist $C^a:y^2=a f(x)$, the point $(\alpha,0)$ belongs to $C^a(k_v)$.
\end{proof}

\begin{cor}\label{cor:ps-class-twists}
Let $k$ be a number field, let $C:y^2=f(x)$ be a hyperelliptic curve of
genus $g\geq 2$, where $f(x)\in k[x]$ is separable of degree
$2g+2$, and let $J$ be the Jacobian of $C$. Suppose that
$\operatorname{Gal}(f)\simeq S_{2g+2}$. Let
$\mathfrak c\in \Sel^2(J/k)$ be the class of \eqref{eq:four-classes}.
For every $t\in k^\times$, viewing $\mathfrak c$ as an element of
$\Sel^2(J^t/k)$ via the canonical identification $J^t[2]\simeq J[2]$,
we have
\[
\langle \mathfrak c,\mathfrak c\rangle_{J^t}
\equiv
\dim_{\F_2}\Sel^2(J^t/k)+\rk_2(J^t)
\pmod 2.
\]
In particular, if $C^t(k_v)\neq\emptyset$ for every place $v$ of $k$,
then
\[
\dim_{\F_2}\Sel^2(J^t/k)+\rk_2(J^t)\equiv 0\pmod 2.
\]
\end{cor}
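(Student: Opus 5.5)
The plan is to apply \Cref{prop:ps-class} to the twisted curve $C^t$ and then use \Cref{lem:maximal-galois-image} to remove the $J^t(k)[2]$ term. Fix $t\in k^\times$. The curve $C^t\colon y^2=tf(x)$ is again a hyperelliptic curve of genus $g\geq 2$. The polynomial $tf(x)$ is separable of degree $2g+2$ and has the same roots as $f$. In particular $\operatorname{Gal}(tf)=\operatorname{Gal}(f)\simeq S_{2g+2}$, so all hypotheses of \Cref{prop:ps-class} and \Cref{lem:maximal-galois-image} hold for $C^t$ and its Jacobian $J^t$.

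First I will apply \Cref{prop:ps-class}(5) to $J^t$ with its own class $\mathfrak c_t\in\Sel^2(J^t/k)$. This gives
\[
\langle \mathfrak c_t,\mathfrak c_t\rangle_{J^t}\equiv \dim_{\F_2}\Sel^2(J^t/k)+\rk_2(J^t)+\dim_{\F_2}J^t(k)[2]\pmod 2 .
\]
By \Cref{prop:ps-class}(7), under the canonical identification $J^t[2]\simeq J[2]$ the class $\mathfrak c_t$ is exactly $\mathfrak c$. So the left-hand side is $\langle\mathfrak c,\mathfrak c\rangle_{J^t}$, with $\mathfrak c$ viewed in $\Sel^2(J^t/k)$ as in the statement. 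Next, \Cref{lem:maximal-galois-image}(1) applied to $C^t$ gives $J^t(k)[2]=0$, since $J^t[2]\simeq J[2]$ as Galois modules. The last term therefore vanishes, and this proves the first congruence.

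For the second assertion, I will apply \Cref{prop:ps-class}(6) to $C^t$. If $C^t(k_v)\neq\emptyset$ for all places $v$, then $\langle\mathfrak c_t,\mathfrak c_t\rangle_{J^t}=0$. Combined with the first part, this gives $\dim_{\F_2}\Sel^2(J^t/k)+\rk_2(J^t)\equiv 0\pmod 2$.

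The only point needing care is the compatibility in \Cref{prop:ps-class}(7). One must check that the pairing $\langle\mathfrak c,\mathfrak c\rangle_{J^t}$ means the Cassels--Tate pairing of $J^t$ evaluated on the image of $\mathfrak c$ in $\Sha(J^t)$. This image is the image of $\mathfrak c_t$, which is $c_t=[\Pic^{g-1}_{C^t/k}]$, so the compatibility holds. Beyond this there is no real obstacle; the corollary is a direct assembly of earlier results.
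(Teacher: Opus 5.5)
Your proposal is correct and follows essentially the same route as the paper: you identify $\mathfrak c_t$ with $\mathfrak c$ via part (7) of the proposition on theta characteristics, apply part (5) to $J^t$, and kill the $\dim_{\F_2}J^t(k)[2]$ term using the maximal Galois image result. You then obtain the second assertion from part (6), exactly as the paper does.
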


\begin{proof}
By \Cref{prop:ps-class}\textup{(7)}, the class $\mathfrak c$ is identified
with the Poonen--Stoll class of $J^t$. Applying
\Cref{prop:ps-class}\textup{(5)} to $C^t$, we obtain
\[
\langle \mathfrak c,\mathfrak c\rangle_{J^t}
\equiv
\dim_{\F_2}\Sel^2(J^t/k)+\rk_2(J^t)
+\dim_{\F_2}J^t(k)[2]
\pmod 2.
\]
By \Cref{lem:maximal-galois-image}(1), $J^t(k)[2]=0$, and the desired
congruence follows.

If $C^t(k_v)\neq\emptyset$ for every place $v$, then
$\operatorname{Pic}^{g-1}(C^t_{k_v})\neq\emptyset$ for every $v$.
Hence $\langle \mathfrak c,\mathfrak c\rangle_{J^t}=0$ by
\Cref{prop:ps-class}\textup{(6)}, giving the final assertion.
\end{proof}

\subsection{The minimal regular model}

\begin{lemma}\label{lemma:one-node-model} Let $K$ be a non-archimedean local field of odd residue characteristic, and let $C/K$ be the hyperelliptic curve $y^2=f(x)$, where $f\in\mathcal O_K[x]$ is separable of degree $2g+2$ and has leading coefficient in $\mathcal O_K^\times$. Suppose that the reduction $\overline f$ has exactly one double root and no other multiple root, and that $\operatorname{val}(\Delta(f))=2$. Then the minimal proper regular model of $C$ over $\mathcal O_K$ has special fibre consisting of two smooth geometrically integral irreducible components geometrically meeting in two ordinary double points. \end{lemma}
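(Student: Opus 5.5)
We will build an explicit regular model by hand, check its special fibre, and then check minimality.

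\emph{Normalising the equation.} Let $\pi$ be a uniformiser. Since the residue characteristic is odd and the leading coefficient of $f$ is a unit, we may pass to a finite unramified extension; the formation of the minimal regular model commutes with such extensions, and the assertions "geometrically integral" and "geometrically meeting" only concern $\bar k$. We may then assume the double root of $\bar f$ is rational, and after a translation it is $x=0$. Hensel's lemma gives a factorisation $f=q(x)h(x)$ over $\O_K$. Here $q$ is monic quadratic with $\bar q=x^2$, and $h$ has degree $2g$, with $\bar h$ separable and $\bar h(0)\neq0$.

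\emph{Reading off the local equation.} The discriminant satisfies
\[
\Delta(f)=\Delta(q)\Delta(h)\operatorname{Res}(q,h)^2,
\]
and both $\Delta(h)$ and $\operatorname{Res}(q,h)$ are units. Hence $\operatorname{val}(\Delta(q))=2$. Completing the square (legitimate since $2$ is a unit) writes $q=x^2-u\pi^2$ with $u\in\O_K^\times$. On $y^2=f(x)$ the unit $h(0)$ can be absorbed locally. The singularity of the naive model $\mathcal C_0\colon y^2=f(x)$ (glued with the chart at infinity, which is smooth because $\deg f$ is even and the leading coefficient is a unit) is then the single point $x=y=\pi=0$. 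There, étale locally, the equation reads $y^2=x^2-u\pi^2$, an $A_1$ singularity.

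\emph{Resolving and describing the fibre.} Blowing up this point once resolves it. In the chart $x=\pi x'$, $y=\pi y'$ the strict transform is $y'^2=x'^2-u$, which is regular. The exceptional divisor $E$ is the conic $y^2=x^2-u\pi^2$ in $\P^2_{\bar k}$ with coordinates $(x:y:\pi)$. It is smooth (as $u\in\O_K^\times$ and $2\nmid$ char) and geometrically integral, so $E\simeq\P^1$, and it is a $(-2)$-curve.

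The strict transform $D$ of the original special fibre $y^2=\bar f(x)=x^2\bar h(x)$ is the normalisation of that nodal curve. It is the hyperelliptic curve $y_1^2=\bar h(x)$ with $y=xy_1$, which has genus $g-1$ and is smooth and geometrically integral since $\bar h$ is separable of even degree $2g$. The curve $D$ meets $E$ at the two points over the node, namely $x=\pi=0$ and $y^2=h(0)x^2$, i.e.\ $y_1=\pm\sqrt{h(0)}$. These are two distinct geometric points. Since the model is regular, the special fibre is $D+E$ with multiplicity one, so $D$ and $E$ meet transversally and $D\cdot E=2$. The resulting configuration is two components meeting in two ordinary double points, and the arithmetic genus checks out: $(g-1)+0+1=g$.

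\emph{Minimality.} Minimality requires that there be no $(-1)$-curve. $D$ has genus $g-1\ge1$, so it cannot be contracted. $E$ has $E^2=-D\cdot E=-2$, so it is not a $(-1)$-curve either. Hence the model is the minimal proper regular model.

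The main point requiring care is the step $\operatorname{val}(\Delta)=2\Rightarrow q\sim x^2-u\pi^2$ together with the verification that a single blow-up suffices. Both follow from the displayed discriminant factorisation and the explicit chart computation above.
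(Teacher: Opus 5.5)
Your proposal is correct and follows essentially the same route as the paper. Both arguments use the Hensel factorization $f=qh$, deduce $\operatorname{val}(\Delta(q))=2$, complete the square to an $A_1$ singularity étale-locally of the form $x''y''=\pi^2$, and blow it up once; the special fibre is then the normalization of the nodal reduction plus a smooth conic meeting it at the two tangent directions. You also make explicit several steps the paper leaves implicit: the discriminant factorization, the genus $g-1$ of the strict transform, and minimality via $E^2=-2$.

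Two small points should be tightened. First, regularity of the blow-up must also be checked at the two points of $D\cap E$, which lie in the $x$-chart rather than the $\pi$-chart you computed. Second, transversality of $D$ and $E$ does not follow from regularity and reducedness of the special fibre alone. It does follow from your genus count read backwards, $g=p_a(D+E)=(g-1)+0+D\cdot E-1$, which forces $D\cdot E=2$ at two distinct geometric points.
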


\begin{proof}
This is a special case of results of \cite{Liu96}; we give a direct proof. Let $\mathbb F$ be the residue field of $K$, and let $\pi$ be a uniformizer.
Let
\[
F(u)=u^{2g+2}f(1/u)\in \mathcal O_K[u].
\]
Consider the flat proper model $\mathcal C_0$ of $C$ over $\mathcal O_K$ obtained by gluing
the two affine schemes
\[
U_0\coloneqq \operatorname{Spec}\mathcal O_K[x,y]/(y^2-f(x)),\qquad U_\infty\coloneqq \operatorname{Spec}\mathcal O_K[u,z]/(z^2-F(u)).
\]
On $U_0$, the special fibre is singular exactly at the multiple roots of $\overline f$.
By assumption, there is only one such root, and it is a double root. On $U_\infty$, the points at infinity are given by the equations $u=0$, $z^2=\overline a_{2g+2}$, 
where $a_{2g+2}$ is the leading coefficient of $f$. Since $a_{2g+2}\in
\mathcal O_K^\times$ and $\on{char}(\F)$ is odd, these points are
smooth, because the partial derivative with respect to $z$ is nonzero.

Let $\alpha\in\mathbb F$ be the unique double root of $\overline f$, and let
$P$ be the corresponding singular point of $\mathcal C_{0,\mathbb F}$. After
translating $x$, we may assume that $\alpha=0$, so that $P=(x,y,\pi)$. We have $\widehat{O}_{\mathcal{C}_0,P}=\mathcal{O}_K[\![x,y]\!]/(y^2-f(x))$.

By Hensel's lemma, we may write $f(x)=q(x)h(x)$, where $q(x)\in\mathcal O_K[x]$ is a monic polynomial of degree $2$ with reduction $x^2$, and $h(x)\in\mathcal O_K[x]$ satisfies $h(0)\in\mathcal O_K^\times$. The assumption
$\operatorname{val}(\Delta(f))=2$ implies $\operatorname{val}(\Delta(q))=2$. Since $2\in \mathcal O_K^\times$, again by Hensel's lemma we may write $h(x)=h(0)s(x)^2$ for some  
$s(x)\in \mathcal O_K[\![x]\!]$
such that $s(0)\in \mathcal O_K^\times$. Thus $\widehat{O}_{\mathcal{C}_0,P}=\mathcal{O}_K[\![x,y']\!]/((y')^2-h(0)q(x))$, where $y'=y/s(x)$. Write $q(x)=x^2+bx+c$. Since $2\in \mathcal O_K^\times$, we have
\[\widehat{O}_{\mathcal{C}_0,P}=\mathcal{O}_K[\![x',y']\!]/((y')^2-h(0)(x')^2+\mu\pi^2),\] 
where $x'=x+b/2$, and where $\mu\in \mathcal O_K^\times$ because $\val(\Delta(q))=2$. Therefore, after passing to an \'etale cover given by taking the square root of $h(0)$, the completion of $\mathcal{C}_0$ at $P$ is given by 
$\mathcal{O}_K[\![x'',y'']\!]/(x''y''-\pi^2)$.
A simple calculation now shows that the blowup $\mathcal{C}$ of $\mathcal{C}_0$ at $P$ is regular. 
The special fibre of $\mathcal C$ is the union of the strict transform of the special fibre of $\mathcal{C}_0$ and the exceptional divisor, which meet in the points corresponding to the two tangent directions of the node (defined over the quadratic extension of $\F$ given by the square root of $\cl{h}(0)$).
Thus $\mathcal{C}$ is the required regular minimal model.
\end{proof}

\subsection{A Weil estimate}

\begin{lemma}\label{lem:weil-bound}
Let $g\geq 1$, let $q$ be an odd prime power, and let
$F(x)\in\mathbb F_q[x]$ have degree $2g+2$.
Let $a\in\mathbb F_q^\times$, and let $Y_a$ be the projective curve over
$\mathbb F_q$ with affine equation $y^2=aF(x)$. Assume that $F$ is separable,
or that $F$ has exactly one double root and no other multiple root. If
$q\geq 4g^2$, then $Y_a$ has a smooth $\mathbb F_q$-point.
\end{lemma}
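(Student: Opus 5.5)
We argue with the Hasse--Weil bound applied to the normalization of $Y_a$. Two cases arise.

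\emph{Separable case.} $Y_a$ (the smooth projective model) is a smooth curve of genus $g$. It is geometrically integral, because $aF$ is not a square in $\cl{\F}_q[x]$. Weil's bound gives $\#Y_a(\F_q)\ge q+1-2g\sqrt q$. We have $q+1-2g\sqrt q>0$ as soon as $\sqrt q\ge 2g$ (indeed $\sqrt q(\sqrt q-2g)+1\ge 1$), and this holds because $q\ge 4g^2$. Every point is smooth, so we are done. If one insists on the projective closure in $\P^2$, it is singular at infinity; in that case we work with the standard two-chart model used in the proof of \Cref{lemma:one-node-model}, which is smooth.

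\emph{One double root case.} Write $F=(x-\alpha)^2G$ over $\cl{\F}_q$. Since $\alpha$ is the unique double root, it is Galois-stable, so $\alpha\in\F_q$ and $G\in\F_q[x]$ is separable of degree $2g$ with $G(\alpha)\ne0$. The normalization $\tilde Y$ of $Y_a$ is the smooth curve $w^2=aG(x)$, where $w=y/(x-\alpha)$, of genus $g-1$. If $g=1$ it has genus $0$; it is geometrically integral (as $aG$ is not a square), hence a smooth conic over a finite field, which has $q+1$ points. In general Weil gives
\[
\#\tilde Y(\F_q)\ge q+1-2(g-1)\sqrt q .
\]
The map $\tilde Y\to Y_a$ is an isomorphism away from the at most two points of $\tilde Y$ above the node $x=\alpha$. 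Therefore $Y_a$ has at least $q+1-2(g-1)\sqrt q-2$ smooth $\F_q$-points. (The points at infinity are smooth, since the degree is even and the chart at infinity has a separable local equation there.) We need
\[
q-1-2(g-1)\sqrt q>0 .
\]
With $\sqrt q\ge 2g$ we get $q-2(g-1)\sqrt q=\sqrt q(\sqrt q-2g+2)\ge 2\sqrt q\ge 4g\ge 4$, so the left side is at least $3>0$.

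The only delicate points are two pieces of bookkeeping: identifying which points of $Y_a$ are smooth, including those at infinity, via the two-chart model; and checking geometric integrality, so that Weil applies. The numerical inequality is routine given $q\ge 4g^2$.
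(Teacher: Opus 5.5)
Your proposal is correct and follows essentially the same route as the paper. In the separable case you apply the Weil bound to the smooth genus-$g$ model. In the one-double-root case you apply it to the genus-$(g-1)$ normalization $w^2=aG(x)$ and discard the at most two $\F_q$-points above the node, with the same inequality coming from $\sqrt q\geq 2g$. Your extra remarks (the $g=1$ conic case, smoothness at infinity in the two-chart model, geometric integrality) are harmless details that the paper leaves implicit.
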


\begin{proof}
If $F$ is separable, then $Y_a$ is a smooth curve of genus $g$. By the Weil
bound,
\[
|Y_a(\mathbb F_q)|\geq q+1-2g\sqrt q\geq 1.
\]
Suppose now that $F(x)=(x-\alpha)^2G(x)$ has exactly one double root
$\alpha$ and no other multiple root. Since this double root is unique, we have
$\alpha\in\mathbb F_q$. The normalization of $Y_a$ is the smooth projective
curve $\widetilde{Y}_a$ with affine model $z^2=aG(x)$, and it has genus $g-1$ because $\deg G=2g$. By the Weil bound,
\[
|\widetilde Y_a(\mathbb F_q)|
\geq q+1-2(g-1)\sqrt q .
\]
The normalization map is an isomorphism away from the singular point of
$Y_a$, and the inverse image of this singular point contains at most two
$\mathbb F_q$-points. Thus it is enough to show that
\[
q+1-2(g-1)\sqrt q>2.
\]
Since $q\geq 4g^2$, we have $\sqrt q\geq 2g$, and hence
\[
q+1-2(g-1)\sqrt q
\geq 2g\sqrt q+1-2(g-1)\sqrt q
=1+2\sqrt q>2.\qedhere
\]
\end{proof}

\section{Parity of Selmer groups in two families}

\label{S4}

\begin{lemma}\label{lemma:local-norm-index}
Let $K$ be a non-archimedean local field of odd residue characteristic,
with ring of integers $\mathcal O_K$. Let $K'/K$ be the unramified quadratic extension. Let $C: y^2=f(x)$
be a hyperelliptic curve of genus $g\geq 2$ 
over $K$, where $f\in \mathcal O_K[x]$ is separable of degree $2g+2$ with leading coefficient in $\mathcal O_K^\times$. Let $J$ be the Jacobian of~$C$.

\begin{enumerate}
\item Suppose that $J$ has good reduction over $K$. Then we have
\[
J(K)/N_{K'/K}J(K')=0,
\]
where $N_{K'/K}\colon J(K')\to J(K)$ is the norm map.
\item Suppose that the reduction of $f$ modulo the maximal ideal of $\mathcal O_K$ has exactly one double root
and no other multiple root, and that $\operatorname{val}(\Delta(f))=2$.
Then we have \[J(K)/N_{K'/K}J(K')\simeq \mathbb Z/2.\]
\end{enumerate}
\end{lemma}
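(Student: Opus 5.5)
The plan is to reduce both statements to a computation on the component group of the Néron model. Let $\mathcal J$ be the Néron model of $J$ over $\mathcal O_K$, with identity component $\mathcal J^0$, and let $\Phi$ be the component group of its special fibre, a finite étale group scheme over the residue field $\F$. Let $\F'$ be the residue field of $K'$ and $\sigma$ the generator of $\Gal(K'/K)$. Since $K'/K$ is unramified, formation of the Néron model commutes with base change to $\mathcal O_{K'}$. Hence $J(K')=\mathcal J(\mathcal O_{K'})$ and $J(K)=\mathcal J(\mathcal O_K)$, and there is an exact sequence
\[0\to \mathcal J^0(\mathcal O_{K'})\to J(K')\to \Phi(\F')\to 0,\]
which is compatible with $\sigma$. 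The same holds over $K$; surjectivity onto $\Phi$ uses Hensel's lemma, $\mathcal J$ being smooth.

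The first key step is to show that $N_{K'/K}\colon \mathcal J^0(\mathcal O_{K'})\to \mathcal J^0(\mathcal O_K)$ is surjective. I would filter by the kernel of reduction. On the special fibre, the norm $\mathcal J^0_\F(\F')\to\mathcal J^0_\F(\F)$ is surjective, because $\hat H^0(\Gal(\F'/\F),G(\F'))=0$ for a connected algebraic group $G$ over a finite field. This follows from Lang's theorem applied to the kernel of the norm map on the Weil restriction, which is connected. On the successive quotients $\mathcal J^0(\mathfrak m^n)/\mathcal J^0(\mathfrak m^{n+1})$ of the formal group, the norm becomes the trace $\mathrm{Lie}\otimes \F'\to \mathrm{Lie}\otimes\F$, which is surjective for finite fields. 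Completeness of the filtration then gives surjectivity on $\mathcal J^0(\mathcal O_{K'})$.

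A snake-lemma argument with the exact sequence above now yields
\[J(K)/N_{K'/K}J(K')\simeq \Phi(\F)/N_{\F'/\F}\Phi(\F')=\hat H^0(\Gal(\F'/\F),\Phi(\F')).\]
In case (1) we have $\Phi=0$, so the quotient vanishes.

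In case (2) I would use Lemma \ref{lemma:one-node-model}. The minimal regular model has reduced special fibre consisting of two geometrically integral components $C_1,C_2$ meeting in two nodes. Reducedness gives multiplicity $1$ for both components, so Raynaud's theorem applies: $\Phi(\bar\F)$ is computed from the intersection matrix, i.e.\ from the dual graph. That graph has two vertices joined by two edges, so its first Betti number is $1$ and $\Phi(\bar\F)\simeq\Z/2$, generated by $C_1-C_2$ modulo the image of the intersection matrix $\begin{pmatrix}-2&2\\2&-2\end{pmatrix}$. The Galois action on $\Z/2$ is necessarily trivial, so $\Phi(\F)=\Phi(\F')=\Z/2$. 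The norm is then multiplication by $2$, which is zero, and the quotient is $\Z/2$.

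The step I expect to be the main obstacle is justifying the identification of $\Phi$ with the graph-theoretic group over $\bar\F$. This requires checking that Raynaud's hypotheses hold, namely that the model is regular with gcd of multiplicities $1$, so that it is compatible with passing to the strict henselization; these facts are supplied by Lemma \ref{lemma:one-node-model}. The rest of the argument is formal.
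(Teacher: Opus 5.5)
Your proposal is correct and follows essentially the same route as the paper: the paper simply cites Mazur (Propositions 4.2 and 4.3 of his 1972 paper) for the identification of $J(K)/N_{K'/K}J(K')$ with the $\Gal(\F'/\F)$-cohomology of $\Phi(\F')$, which you re-derive by hand via Lang's theorem and the formal-group filtration, and it then computes $\Phi\simeq\Z/2$ from the two-vertex, two-edge dual graph via Raynaud's theorem exactly as you do. One small point to make explicit: surjectivity of $J(K)\to\Phi(\F)$ also needs Lang's theorem for the connected group $\mathcal J^0_\F$ (to lift from $\Phi(\F)$ to $\mathcal J_\F(\F)$), not only Hensel's lemma.
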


\begin{proof}
Let $\mathbb F$ and $\mathbb F'$ be the residue fields of $K$ and
$K'$, respectively. Let $\mathcal J$ be the Néron model of $J$ over
$\mathcal O_K$, let $\mathcal J^0$ be its identity component, and let
$\Phi$ be the group scheme of connected components of the special fibre.

By \cite[Propositions 4.2 and 4.3]{Maz72}, we have
\begin{equation}\label{eq:j-modulo-norms}
J(K)/N_{K'/K}J(K')
\simeq H^1(\on{Gal}(K'/K),J(K'))\simeq H^1(\on{Gal}(\mathbb F'/\mathbb F),\Phi(\mathbb F')).
\end{equation}

(1) If $J$ has good reduction, then its Néron model is an
abelian scheme over $\mathcal O_K$. In particular, the special fibre is
connected, so $\Phi=0$, and we conclude by \eqref{eq:j-modulo-norms}.

(2) By Lemma \ref{lemma:one-node-model}, the special fibre of the minimal proper
regular model of $C$ consists of two irreducible components meeting in two
ordinary double points. Thus its dual graph has two vertices joined by two
edges. Therefore, by \cite[Chapter 9, \S 6, Proposition 10]{BLR90} we have $\Phi_{\cl{\mathbb F}}\simeq \mathbb Z/2$, and hence $\Phi\simeq \mathbb Z/2$. Thus $\Phi(\mathbb F')\simeq \Z/2$ and we conclude again by \eqref{eq:j-modulo-norms}.
\end{proof}

For $i=1,2$ and $t\in k^\times$, let $J_i^t=\Pic^0_{C_i^t/k}$ be the Jacobian of $C_i^t$. We use the canonical identification $J_i^t[2]\simeq J_i[2]$
of Galois modules.

\begin{lemma}\label{lemma4}
We place ourselves in \Cref{sit1}. Let $i,j\in\{1,2\}$ with
$i\neq j$.

\begin{enumerate}
\item Let $t\in k_{w_j}^\times$ be a unit. Then
\[
J_i(k_{w_j})/N J_i(k_{w_j}(\sqrt t))=0.
\]
\item Let $t\in k_{w_i}^\times$ be a unit. Then
\[
\dim_{\F_2}(J_i(k_{w_i})/N J_i(k_{w_i}(\sqrt t)))
=
\begin{cases}
0, & \text{if } t\in k_{w_i}^{\times 2},\\
1, & \text{if } t\notin k_{w_i}^{\times 2}.
\end{cases}
\]
\end{enumerate}
\end{lemma}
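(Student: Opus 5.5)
The plan is to reduce both statements to \Cref{lemma:local-norm-index}. That lemma covers exactly the two local situations: good reduction, and one double root with $\val(\Delta)=2$. In both parts $t$ is a unit at an odd place $w\notin S_0$. Hence $K(\sqrt t)/K$, with $K=k_w$, is either trivial (when $t\in K^{\times 2}$) or the unramified quadratic extension $K'$ (when $t$ is a non-square unit, since the residue characteristic is odd). In the trivial case the norm map is the identity on $J_i(K)$, so the quotient $J_i(K)/NJ_i(K(\sqrt t))$ is $0$. This handles the square case of both parts at once. It remains to treat a non-square unit $t$, where the quotient is $J_i(K)/N_{K'/K}J_i(K')$.

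For part (1), take $w=w_j$ with $j\neq i$. By \Cref{sit1}, $w_j\notin S_0$, so $f_i\in\O_{w_j}[x]$ has unit leading coefficient, and $f_i$ is separable modulo $w_j$. A hyperelliptic curve $y^2=f_i(x)$ with $\deg f_i=2g_i+2$ even, unit leading coefficient and separable reduction at an odd place has smooth reduction: the model glued from the two affine charts, as in the proof of \Cref{lemma:one-node-model}, is smooth over $\O_K$. Therefore $J_i$ has good reduction at $w_j$, and \Cref{lemma:local-norm-index}(1) gives the vanishing.

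For part (2), take $w=w_i$. Again $w_i\notin S_0$ gives odd residue characteristic, integrality and a unit leading coefficient. By hypothesis $f_i$ has a unique double root modulo $w_i$ and $\val_{w_i}(\Delta_i)=2$. So \Cref{lemma:local-norm-index}(2) applies and gives $J_i(K)/N_{K'/K}J_i(K')\simeq\Z/2$, which has dimension $1$ over $\F_2$.

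There is no real obstacle here. The only points to check are that a non-square unit generates the unramified quadratic extension, and that separable reduction of $f_i$ at $w_j$ yields good reduction of $J_i$. The latter follows from the smoothness of the standard model.
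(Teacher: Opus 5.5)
Your proposal is correct and matches the paper's proof: you separate the square case, where the norm quotient is trivial, from the non-square unit case, where $K(\sqrt t)/K$ is the unramified quadratic extension. You then apply \Cref{lemma:local-norm-index}(1) at $w_j$, using good reduction coming from separability of $f_i$ modulo $w_j$, and \Cref{lemma:local-norm-index}(2) at $w_i$. Your explicit check that the two-chart model is smooth only spells out the good-reduction claim that the paper states without detail.
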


\begin{proof}
(1) Let $K=k_{w_j}$. The assumptions imply that $C_i$ has good reduction over $K$, and therefore $J_i$ has good reduction over $K$.

If $t\in K^{\times 2}$, then $K(\sqrt t)=K$ and the norm quotient is zero.
If $t\notin K^{\times 2}$, then since the residue characteristic of $K$ is odd, $K(\sqrt t)/K$ is the unramified quadratic
extension, and the conclusion follows from Lemma~\ref{lemma:local-norm-index}(1).

(2) Let $K=k_{w_i}$. If $t\in K^{\times 2}$, then
$K(\sqrt t)=K$, so the norm map is the identity map on $J_i(K)$. Hence the
quotient is zero. Suppose now that $t\notin K^{\times 2}$. By assumption, the residue characteristic of $K$ is odd, so that $K(\sqrt t)/K$ is the
unramified quadratic extension.
The conclusion follows from Lemma~\ref{lemma:local-norm-index}(2).
\end{proof}

Recall that $\mathfrak S$ is the set of $t\in k^\times$ such that $C_i^t({\bf A}_k)\neq\emptyset$ for $i=1,2$. 

\begin{prop}\label{prop:odd-odd}
We place ourselves in \Cref{sit1}. Assume that
$X({\bf A}_k)\neq\emptyset$. Then there exists $t\in\mathfrak S$ such
that $\dim_{\F_2}\Sel^2(J_1^t)$ and
$\dim_{\F_2}\Sel^2(J_2^t)$ are odd.
\end{prop}

\begin{proof} 
Let $t\in k^\times$, and let $L=k(\sqrt{t})$. By \cite[Theorem 10.12]{Mor19} we have
\begin{equation}
\dim_{\F_2}\Sel^2(J_i^t)\equiv\dim_{\F_2}\Sel^2(J_i)+
\sum_v\dim_{\F_2}\big(J_i(k_v)/N(J_i(L_w))\big)\bmod 2, \label{parity}
\end{equation}
where the sum is over all places of $k$, and $w$ is a place of $L$ over $v$. (If $v$ is split in $L$, then the summand is zero.)

Let $S$ be the union of $S_0$ and the primes dividing $\Delta_1\Delta_2$.

Assume $v\notin S$. Then $J_i$ has good reduction at $v$, so the  local root number is $w(J_i/L_w)=1$.
By  \cite[Theorem 1.8]{Mor23}, for such $v$, we thus have 
$$(\Delta_i,t)_{k_v}=(-1)^{\e(C_i/k_v)+\e(C_i^t/k_v)+\dim_{\F_2}\big(J_i(k_v)/N(J_i(L_w))\big)},$$
where $\e(C_i/k_v)=0$ if $C_i$ has a $k_v$-rational
divisor of degree $g-1$, otherwise $\e(C_i/k_v)=1$. Since $v\notin S$, the curve $C_i$ has good reduction at $v$. 
Then the special fibre of $C_i$ has an
$\F_v$-rational divisor of degree 1. By Hensel's lemma, it lifts to a $k_v$-rational divisor of degree $1$. Thus
$C_i$ has a $k_v$-rational divisor of degree $g-1$, so 
$\e(C_i/k_v)=0$.  For $v\notin S$ and any $t\in k^{\times}$, we thus have
$$(\Delta_i,t)_{k_v}=(-1)^{\e(C_i^t/k_v)+\dim_{\F_2}\big(J_i(k_v)/N(J_i(L_w))\big)}.$$

By the product formula 
$\prod_v (\Delta_i,t)_{k_v}=1$, the parity of 
$\dim_{\F_2}\Sel^2(J_i^t)-\dim_{\F_2}\Sel^2(J_i)$ is
determined by
$$\prod_{v\notin S}(-1)^{\e(C_i^t/k_v)}\cdot\prod_{v\in S}(\Delta_i,t)_{k_v}
\cdot
\prod_{v\in S}(-1)^{\dim_{\F_2}\big(J_i(k_v)/N(J_i(L_w))\big)}\in\{\pm1\}.$$
If $t\in\mathfrak S$, then $C_i^t$ is everywhere locally soluble, so the first product is 1.
If, moreover, $t$ is a unit at $w_1$ and $w_2$, then since $\val_{w_i}(\Delta_j)$ is even
by assumption, we have $(\Delta_i,t)_{k_{w_1}}=(\Delta_i,t)_{k_{w_2}}=1$, hence for $i=1,2$
the parity of 
$\dim_{\F_2}\Sel^2(J_i^t)-\dim_{\F_2}\Sel^2(J_i)$ is 
determined by
\begin{equation}
\prod_{v\in S\setminus\{w_1,w_2\}}(\Delta_i,t)_{k_v}\cdot
\prod_{v\in S}(-1)^{\dim_{\F_2}\big(J_i(k_v)/N(J_i(L_w))\big)} \in\{\pm1\}.\label{parity1}
\end{equation}
This holds for any $t \in \mathfrak{S}$ that is a unit at $w_1$ and $w_2$. 

By \Cref{prop:fibration-method}, the assumption
$X({\bf A}_k)\neq\emptyset$ implies that $\mathfrak S\neq\emptyset$.
For every $v\in S\setminus\{w_1,w_2\}$, choose
$t_v\in k_v^\times$ such that
$C_i^{t_v}(k_v)\neq\emptyset$ for $i=1,2$.
Then, using Lemma \ref{lemma4}, choose units $t_{w_1}\in k_{w_1}^\times$ and $t_{w_2}\in k_{w_2}^\times$ such that 
\begin{equation}
\prod_{v\in S\setminus\{w_1,w_2\}}(\Delta_i,t_v)_{k_v}\cdot
\prod_{v\in S}(-1)^{\dim_{\F_2}\big(J_i(k_v)/N(J_i(k_v(\sqrt{t_v})))\big)}\label{parity11}
\end{equation}
takes any given value in $\{\pm 1\}$, for $i=1,2$.
We note that $C_i^{t_{w_j}}(k_{w_j})\neq\emptyset$ for all
$i,j=1,2$ by Lemma \ref{lem:weil-bound} and Hensel's lemma.
By \Cref{prop:fibration-method}, there exists $t \in \mathfrak{S}$ sufficiently close to $t_v$ for all $v\in S$. 
Then the value of (\ref{parity1})
equals the value of (\ref{parity11}),
which we can choose to be such that
$\dim_{\F_2}\Sel^2(J_i^t)$ is odd for $i=1,2$.
\end{proof}

\section{Simultaneous variation of the Cassels–Tate pairing}
\label{app:simultaneous-morgan}
\label{S5}

\begin{defin}\label{def:morgan-datum}
Let $k$ be a number field, let $J$ be the Jacobian of a smooth projective
hyperelliptic curve
over $k$, let $L=k(J[2])$, and set $G=\Gal(L/k)$. Let $\w$ be the class
defined in Proposition \ref{prop:ps-class}(1), and suppose that 
\begin{equation}\label{eq:selmer-intersection}
    \on{Sel}^2(J)\cap H^1(G,J[2])=\ang{\w} \quad\text{as subgroups of $H^1(k,J[2])$.}
\end{equation}
Let $m:=\dim_{\F_2}(\ang{\w})\in\{0,1\}$ and
$n:=\dim_{\F_2}\Sel^2(J)/\ang{\w}$.
Thus we have $n+m=\dim_{\F_2}\Sel^2(J)$. Let
\begin{equation}\label{eq:S-morgan}
\begin{aligned}
\mathcal S&=\mathcal S_0\sqcup\mathcal S_1,\\
\mathcal S_0&=
\begin{cases}
\emptyset, & \text{if } m=0,\\
\{n+1\}\times\{1,\ldots,n\}, & \text{if } m=1,
\end{cases}
\qquad
\mathcal S_1=\{(r,q):1\leq r<q\leq n\}.
\end{aligned}
\end{equation}
By definition, a \emph{Morgan datum} for $J$ is a tuple
\begin{equation}\label{eq:morgan-datum}
(T=\{\mathfrak a_i\}_{i=1}^n,T'=\{\mathfrak a_i\}_{i={n+1}}^{n+m}, \{a_i\}_{i=1}^{n+m},\{\gamma_\nu\}_\nu)
\end{equation}
with the following properties:
\begin{enumerate}
    \item $T\sqcup T'=\{\mathfrak a_i\}_{i=1}^{n+m}$ is a basis of $\on{Sel}^2(J)$,
    \item the image of $T=\{\mathfrak a_i\}_{i=1}^n$ in $\on{Sel}^2(J)/\ang{\w}$ is a basis,
    \item $T'=\{\mathfrak a_i\}_{i={n+1}}^{n+m}$ is a basis of $\ang{\w}\subset \on{Sel}^2(J)$,
    \item for each $i\in\{1,\ldots,n+m\}$, the map $a_i\colon \Gamma_k\to J[2]$ is a cocycle representing $\mathfrak a_i$, which we require to be the inflation of a cocycle $\tilde{a}_i\colon G\to J[2]$ when $i>n$,
    \item for each $\nu=(r,q)\in\mathcal S$, the map $\gamma_\nu\colon \Gamma_k\to\mu_2$ is a
$1$-cochain such that $d\gamma_\nu=a_r\cup a_q$, where the cup-product is
formed using the Weil pairing on $J[2]$; such a cochain $\gamma_\nu$ exists because $\mathfrak a_r\cup \mathfrak a_q=0$ in $H^2(k,\mu_2)$ by
\cite[Remark~3.4]{Mor25}.
\end{enumerate}
\end{defin}

\begin{construction}\label{constr:morgan-field}
Let $k$ be a number field, let $J$ be the Jacobian of a smooth projective hyperelliptic curve
over $k$, let $L=k(J[2])$, and set $G=\Gal(L/k)$. Let $\w$ be the class
defined in Proposition \ref{prop:ps-class}(1), and suppose that \eqref{eq:selmer-intersection} holds. Fix a Morgan datum \eqref{eq:morgan-datum} for $J$. We attach to this datum a finite Galois extension $M/k$ and functions $\psi_\nu$ for $\nu\in\mathcal S$.

For every $\tau\in\Gamma_k$, let $\bar\tau$ denote its image in
$G=\Gal(k(J[2])/k)$. The cocycles $a_1,\ldots,a_n$ define a homomorphism
\[
\varphi_T\colon \Gamma_k\longrightarrow J[2]^n\rtimes G,\qquad
\tau\longmapsto ((a_i(\tau))_{1\leq i\leq n},\bar\tau),
\]
where $G$ acts diagonally on $J[2]^n$. Set
\[
\Gamma\coloneqq J[2]^n\rtimes G.
\]
For $1\leq i\leq n$, let $f_i$ be the $i$th coordinate cocycle on
$\Gamma$. For $n<i\leq n+m$, let $f_i\colon\Gamma\to J[2]$ be the inflation of
the cocycle $\tilde a_i\colon G\to J[2]$ of \Cref{def:morgan-datum}(4) along the projection $\Gamma\to G$. For every $1\leq i\leq n+m$, we have $a_i=f_i\circ\varphi_T$.

Following \cite[Section~4.3 and Notation~4.12]{Mor25}, let $E$ be the central
extension of $\Gamma$ by $\mu_2^{\mathcal S}$ associated with the $2$-cocycle
$(f_r\cup f_q)_{\nu=(r,q)\in\mathcal S}$. Since
$d\gamma_\nu=a_r\cup a_q$ for every $\nu=(r,q)\in\mathcal S$, the cochains
$\gamma_\nu$ give a lift of $\varphi_T$ to a homomorphism
\[
\varphi_{T,T'}\colon \Gamma_k\longrightarrow E.
\]
Explicitly, this lift is characterized by the requirement that its projection
to $\Gamma$ is $\varphi_T$, and that its $\nu$-th $\mu_2$-coordinate is
$\gamma_\nu$.

We define $M=k_{T,T'}$ to be the fixed field of $\ker(\varphi_{T,T'})$.
Thus $\varphi_{T,T'}$ identifies $\Gal(M/k)$ with a subgroup of $E$. In what
follows, for every $\sigma\in\Gal(M/k)$ we write $a_i(\sigma)$ and
$\gamma_\nu(\sigma)$ for $a_i(\tilde{\sigma})$ and $\gamma_\nu(\tilde{\sigma})$, where $\tilde{\sigma}\in \Gamma_k$ is a lift of $\sigma$. These values are independent of the choice of $\tilde\sigma$, since $\ker(\varphi_{T,T'})$ is contained in the kernels of all the maps $a_i$ and $\gamma_\nu$.

Let $\sigma\in\Gal(M/k)$, and suppose that the image of $\sigma$ in $G$ acts
on $J[2]$ without non-zero fixed points. It follows that the endomorphism $\sigma-1$ of $J[2]$ is invertible and hence, for each $1\leq i\leq n+m$, there is a unique element
$P_{i,\sigma}\in J[2]$ such that
\[
a_i(\sigma)=\sigma P_{i,\sigma}-P_{i,\sigma}.
\]
We write $\mu_2$ additively and identify it with $\F_2$. For
$\nu=(r,q)\in\mathcal S$, define
\[
\psi_\nu(\sigma)=e_2(P_{r,\sigma},a_q(\sigma))+\gamma_\nu(\sigma)\in\F_2,
\]
where $e_2\colon J[2]\times J[2]\to\mu_2$ is the Weil pairing associated with the
chosen principal polarization.
\end{construction}

\begin{lemma}
\label{lem:morgan-data}
Let $k$ be a number field, let $J$ be the Jacobian of a smooth projective hyperelliptic curve
over $k$, let $L=k(J[2])$, and set $G=\Gal(L/k)$. Let $\w$ be the class
defined in Proposition \ref{prop:ps-class}(1). Suppose that \eqref{eq:selmer-intersection} holds, that is, \[\on{Sel}^2(J)\cap H^1(G,J[2])=\ang{\w} \quad\text{as subgroups of $H^1(k,J[2])$}.\] 
Fix the sets $\mc{S}=\mc{S}_0\sqcup\mc{S}_1$ of \eqref{eq:S-morgan}, fix a Morgan datum for $J$ (in the sense of \Cref{def:morgan-datum}), and let $M=k_{T,T'}$ and $\{\psi_\nu\}_{\nu\in\mc{S}}$ be as in \Cref{constr:morgan-field}. Assume, moreover, that $J$ satisfies \cite[Assumption~1.3(A),(B)]{Mor25}: 
\begin{itemize} 
\item[(A)] $J[2]$ is a simple $\F_2[G]$-module and $\on{End}_G(J[2])=\F_2$; 
\item[(B)] the set $\{g\in G: J[2]^g=0\}$ is not empty. 
\end{itemize} 

Then the following hold.

\begin{enumerate}[label=\textup{(\alph*)}]
\item The field extension $M/L$ is a $2$-extension, that is, a finite Galois extension whose Galois group is a $2$-group. Moreover, every quadratic character $\Gal(M/k)\to \mu_2$ factors through $G$.

\item Let $g\in G$ satisfy $J[2]^g=0$. For every choice of values $\epsilon_\nu\in\F_2$, where $\nu\in\mathcal S$, there exists $\sigma\in\Gal(M/k)$ whose image in $G$ is $g$, and such
that $\psi_\nu(\sigma)=\epsilon_\nu$ for every $\nu\in\mathcal S$.

\item Let $\Sigma_0$ be a finite set of places of $k$ containing all
places above $2$, all archimedean places, all places of bad reduction for
$J$, and all places ramified in $M/k$. Let $\chi\colon \Gamma_k\to\mu_2$ be a quadratic character such that $\chi_v$ is trivial for every $v\in\Sigma_0$, and such that, for every $v\notin\Sigma_0$ at which $\chi$ is ramified, one has $J[2]^{\on{Frob}_v}=0$.
Then
\[
\Sel^2(J^\chi/k)=\Sel^2(J/k)\quad \text{as subgroups of $H^1(k,J[2])$},
\]
and
\[
\rk_2(J^\chi/k)\equiv\rk_2(J/k)\pmod 2.
\]

\item Under the hypotheses of \textup{(c)}, for all
$\nu=(r,q)\in\mathcal S$, one has
\[
\langle \mathfrak a_r,\mathfrak a_q\rangle_{J^\chi}
=
\langle \mathfrak a_r,\mathfrak a_q\rangle_J
+
\sum_{v\in\operatorname{Ram}(\chi)\setminus\Sigma_0}
\psi_\nu(\on{Frob}_v).
\]
\end{enumerate}
\end{lemma}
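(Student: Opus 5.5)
The plan is to reduce each part to the corresponding statement of Morgan \cite{Mor25}. The one genuinely new input is the extra set $\mathcal S_0$ of pairs $(n+1,q)$ involving the class $\mathfrak a_{n+1}=\w$, whose cocycle is inflated from $G$. Morgan's setup already handles a basis of the Selmer group modulo classes inflated from $G$ (his Notation 4.12 and Section 4.3). So I would verify that our Morgan datum is an instance of his, with $T$ a lift of a basis of $\Sel^2(J)/\ang{\w}$ and $T'$ spanning $\Sel^2(J)\cap H^1(G,J[2])$ as in \eqref{eq:selmer-intersection}, and then quote the results, checking the new indices by hand.

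For (a), $\Gal(M/k)$ embeds in $E$. This is a central extension by $\mu_2^{\mathcal S}$ of $J[2]^n\rtimes G$, so the kernel of $E\to G$ is an extension of the elementary abelian $2$-group $J[2]^n$ by $\mu_2^{\mathcal S}$. It is therefore a $2$-group, and $\Gal(M/L)$ embeds in it.

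For the second claim of (a), let $\chi$ be a quadratic character of $\Gal(M/k)$. Its restriction to $\Gal(M/L)$ is trivial on the commutator subgroup and on squares. I would show that it kills the images of $J[2]^n$ and of the central $\mu_2^{\mathcal S}$, as follows.
\begin{itemize}
\item \emph{The $J[2]^n$ part.} By (A), $J[2]$ is simple and nontrivial, because $G$ acts faithfully by Proposition \ref{prop:ps-class}(3). Hence $J[2]=(g-1)J[2]$ for some $g\in G$, which in turn follows from (B). So any $G$-equivariant map from $J[2]^n\cap\Gal(M/k)$ to $\mu_2$ that is also a restriction of a character of the whole group vanishes on commutators $[g,x]$. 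Since $J[2]^g=0$, these commutators generate.
\item \emph{The central part.} This needs the structure from Morgan's Lemma 4.14. It is essentially that $\mu_2^{\mathcal S}\cap\Gal(M/k)$ lies in the Frattini/commutator subgroup, because the defining cocycles $f_r\cup f_q$ are commutator-type: the central element arises as $[x,y]$ with $x,y$ in the $r$-th and $q$-th coordinates, paired by $e_2$. For $\nu\in\mathcal S_0$ the class $\tilde a_{n+1}$ comes from $G$, and I would instead realise the central element as a commutator of a lift of $g\in G$ with an element of the $q$-th copy of $J[2]$.
\end{itemize}

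Part (b) is the main obstacle. The plan is to follow Morgan's proof of his Proposition 4.18/Lemma 4.21. First fix a lift $\sigma_0$ of $g$. The fibre of $\Gal(M/k)\to G$ over $g$ is a coset of $\Gal(M/L)$. Multiplying $\sigma_0$ by $x=(x_i)\in J[2]^n$ (lifted) changes $P_{i,\sigma}$ by $(g-1)^{-1}x_i$, with $x_i$ free. It also changes $\psi_\nu$ by a bilinear expression in the $x_i$ plus the central correction. Multiplying by central elements $z\in\mu_2^{\mathcal S}\cap\Gal(M/k)$ shifts $\gamma_\nu$ by $z_\nu$.

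So I need $\Gal(M/k)\supseteq\mu_2^{\mathcal S}$ and $J[2]^n$ to be surjected onto. That is, I need $\varphi_{T,T'}$ to be as surjective as possible. This uses the independence of the $\mathfrak a_i$ modulo $H^1(G,J[2])$ together with (A); it is the standard Kummer-theory argument that $H^1(L,J[2])^G=\operatorname{Hom}_G(\Gal,J[2])$ and that the classes are linearly independent. It also needs the independence of the products $f_r\cup f_q$ in $H^2$ of the relevant quotient. The latter is where I expect difficulty, especially for $\mathcal S_0$, where one factor is inflated from $G$. I would try to prove it by showing that no nonzero combination $\sum\epsilon_\nu f_r\cup f_q$ becomes a coboundary on $\Gal(L_T/k)$, by restricting to $J[2]^n$ and using the nondegeneracy of $e_2$ and $\on{End}_G(J[2])=\F_2$.

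Parts (c) and (d) are local computations that I would cite directly. Part (c) is Morgan's Lemma 5.2/Prop. 5.3: at $v\in\Sigma_0$ nothing changes because $\chi_v$ is trivial. At ramified $v\notin\Sigma_0$ with $J[2]^{\on{Frob}_v}=0$, both local conditions are zero since $J(k_v)[2]=0$ and $J^\chi(k_v)[2]=0$. At the remaining $v$ the local conditions are both unramified. The rank parity then follows from \eqref{parity} with the Lemma \ref{lemma:local-norm-index}-type vanishing at the ramified places.

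Part (d) is Morgan's Theorem 4.8 variation formula. I would reprove the $\mathcal S_0$ case with the same local computation: the contribution at each ramified $v$ is $e_2(P_{r,\sigma},a_q(\sigma))+\gamma_\nu(\sigma)$ for $\sigma=\on{Frob}_v$, and this computation never uses whether $a_r$ is inflated.
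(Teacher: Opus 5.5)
Your strategy of reducing every part to \cite{Mor25} is the same as the paper's, whose proof consists only of the citations \cite[Prop.~4.14, Cor.~4.19, Lemma~2.8, Prop.~3.3]{Mor25}. However, your premise that the pairs in $\mathcal S_0$ are new is mistaken. The paper takes $E$ and $M=k_{T,T'}$ from \cite[\S4.3, Notation~4.12]{Mor25}, built from $T$ and the inflated $T'$ together. The $\mathcal S_0$-values are exactly what Morgan must control to fix $P(\c,\cdot)$ in his Theorem~1.14 (cf.\ \Cref{lem:admissible-determined-by-S}), so the citations already cover them. Where you instead argue by hand, there is a genuine gap, and part (b), which is the heart of the lemma, is left unresolved.

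\emph{Where the hand argument fails.} Since $f_{n+1}$ is inflated from $G$, it vanishes on $J[2]^n$. So on the preimage of $J[2]^n$ in $E$, every $\mathcal S_0$-coordinate of the cocycle $(f_r\cup f_q)_\nu$ is identically zero. This breaks two of your steps.
\begin{enumerate}
\item In (b), restricting $\sum_\nu\epsilon_\nu f_r\cup f_q$ to $J[2]^n$ detects only the $\mathcal S_1$-coefficients. There it works, since a nonzero form $\sum_{r<q}\epsilon_{rq}e_2(x_r,y_q)$ is not alternating. But it is blind to $\mathcal S_0$.
\item In (a), the commutator of a lift of $g$ with an element $y$ of the $q$-th copy of $J[2]$ has $\Gamma$-component $(g-1)y\neq0$. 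So it is not central.
\end{enumerate}

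\emph{The missing ingredient.} You never use that $\w\neq0$ in $H^1(G,J[2])$ when $m=1$, and this is indispensable. If $\tilde a_{n+1}=db$, then $f_{n+1}\cup f_q=d\bigl(e_2(b,f_q(\cdot))\bigr)$. Then $\gamma_\nu$ could differ from a function on $\Gamma$ by an arbitrary quadratic character, and (a) would fail.

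\emph{The argument that works} is by conjugation. Once the $\mathcal S_1$-coefficients are zero, suppose $f_{n+1}\cup\sum_q\lambda_qf_q=d\beta$.
\begin{enumerate}
\item The restriction $\beta|_{J[2]^n}$ is a homomorphism, hence of the form $x\mapsto\sum_qe_2(b_q,x_q)$.
\item Comparing $d\beta(g,x)$ with $d\beta(gxg^{-1},g)$ gives $\lambda_q\tilde a_{n+1}(g)=gb_q-b_q$ for all $g\in G$.
\item Since $\tilde a_{n+1}$ is not a coboundary, every $\lambda_q=0$.
\end{enumerate}
Combine this with the surjectivity of $\varphi_T$ onto $\Gamma$, which follows from (A) and the independence of $\mathfrak a_1,\dots,\mathfrak a_n$ modulo $H^1(G,J[2])$. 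This gives $\varphi_{T,T'}(\Gamma_k)=E$.

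Part (b) is then immediate. Multiplying by a central $z$ shifts $\gamma_\nu$ by $z_\nu$ and leaves $a_q(\sigma)$ and $P_{r,\sigma}$ unchanged, so there is no need to vary $x$. The same conjugation computation shows that a quadratic character of $E$ is trivial on $\mu_2^{\mathcal S_0}$, which completes (a).

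\emph{A smaller slip in (c).} The congruence \eqref{parity} is about $\dim_{\F_2}\Sel^2$, not $\rk_2$. Once the Selmer groups coincide it says nothing about ranks, so the rank congruence still needs \cite[Lemma~2.8]{Mor25}.
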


\begin{proof}
Part \textup{(a)} follows from \cite[Proposition 4.14]{Mor25}. Part
\textup{(b)} is \cite[Corollary 4.19]{Mor25}. Part \textup{(c)} is
\cite[Lemma 2.8]{Mor25}. Part \textup{(d)} is
\cite[Proposition 3.3]{Mor25}.
\end{proof}

\begin{lemma}
\label{lem:morgan-global-twisting}
Let $M/k$ be a finite Galois extension, and let $\Sigma$ be a finite set
of places of $k$, containing all archimedean places, all places above $2$,
and all places ramified in $M/k$. Let $\mathcal C\subset \Gal(M/k)$ 
be a nonempty union of conjugacy classes. For every $v\in\Sigma$, let
$\chi_v\colon G_{k_v}\to\mu_2$ be a quadratic character. Suppose that, for every quadratic character $\beta\colon \Gal(M/k)\to\mu_2$ which is trivial on $\mathcal C$, one has
\begin{equation}\label{eq:reciprocity-condition}
\sum_{v\in\Sigma}
\operatorname{inv}_v(\chi_v\cup\operatorname{res}_v\beta)=0.
\end{equation}
Then there exists a global quadratic character $\chi\colon \Gamma_k\to\mu_2$ such that $
\operatorname{res}_v(\chi)=\chi_v$ for every $v\in\Sigma$ and such that every place $v\notin\Sigma$ at which $\chi$ is ramified satisfies $\on{Frob}_v\in\mathcal C$. 
\end{lemma}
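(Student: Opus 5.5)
This is a global class field theory statement, and I will prove it with Poitou--Tate duality for $\mu_2$-coefficients together with Chebotarev. Let $\Sigma_1\supset\Sigma$ be a finite set of places. Consider the restriction map
\[
H^1(k,\mu_2)\to\prod_{v\in\Sigma}H^1(k_v,\mu_2)\times\prod_{v\notin\Sigma}H^1(k_v,\mu_2)/H^1_{\rm nr}(k_v,\mu_2).
\]
Finding $\chi$ amounts to finding a global class with prescribed components $\chi_v$ on $\Sigma$ that is unramified outside $\Sigma$ except at places whose Frobenius lies in $\mathcal C$.

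The plan is first to find an auxiliary finite set $\Sigma'$ of places $w\notin\Sigma$ with $\on{Frob}_w\in\mathcal C$. Then I want a global $\chi$ that restricts to $\chi_v$ on $\Sigma$, is unramified outside $\Sigma\cup\Sigma'$, and is arbitrary at $\Sigma'$. By Poitou--Tate, or more concretely by the Grunwald--Wang-type exact sequence
\[
H^1(k,\mu_2)\to\bigoplus_{v\in\Sigma\cup\Sigma'}H^1(k_v,\mu_2)\to H^1_{\Sigma\cup\Sigma'}(k,\mu_2)^\vee,
\]
the local family is globally realizable with unramifiedness outside $\Sigma\cup\Sigma'$ if and only if it is orthogonal, under $\sum_v\operatorname{inv}_v(-\cup-)$, to every global quadratic character $\beta$ that is unramified outside $\Sigma$ and locally trivial at every $w\in\Sigma'$. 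The dual Selmer condition at $v\notin\Sigma\cup\Sigma'$ is unramifiedness, and there is no condition at $\Sigma$. At $w\in\Sigma'$ we leave the local component free, so the dual condition is $\beta_w=0$. Since $\mu_2$ is self-dual and $\Sigma$ contains the places above $2$ and infinity, this is the standard parity/duality statement.

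Next I choose $\Sigma'$ so that the dual group consists exactly of characters $\beta$ that factor through $\Gal(M/k)$ and vanish on $\mathcal C$. The group $B$ of quadratic characters unramified outside $\Sigma$ is finite. Let $N$ be the compositum of $M$ with all the corresponding quadratic fields. For each $\beta\in B$ that is not (the inflation of) a character of $\Gal(M/k)$ trivial on $\mathcal C$, I use Chebotarev in $N$ to choose a place $w\notin\Sigma$ with $\on{Frob}_w|_M\in\mathcal C$ and $\beta(\on{Frob}_w)\neq0$. This is possible for the following reason. If $\beta$ factors through $M$ but is nonzero on some $c\in\mathcal C$, I take Frobenius equal to $c$. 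If $\beta$ does not factor through $M$, then $\beta$ restricted to $\Gal(N/M)$ is nontrivial, so for any $c\in\mathcal C$ some lift of $c$ to $\Gal(N/k)$ has $\beta\neq0$; here I use that $\mathcal C$ is nonempty. With $\Sigma'$ the set of these finitely many $w$, every $\beta$ in the dual group is trivial on all Frobenii at $\Sigma'$, hence lies in $\Gal(M/k)^\vee$ and vanishes on $\mathcal C$. I also need that characters factoring through $M$ are unramified outside $\Sigma$, which holds because $\Sigma$ contains the ramification of $M$.

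Finally, for such a $\beta$ the local pairing at $w\in\Sigma'$ vanishes because $\beta_w=0$. The obstruction therefore reduces to $\sum_{v\in\Sigma}\operatorname{inv}_v(\chi_v\cup\beta_v)$, which is zero by hypothesis \eqref{eq:reciprocity-condition}. Hence a global $\chi$ exists that is unramified outside $\Sigma\cup\Sigma'$, equals $\chi_v$ on $\Sigma$, and has Frobenius in $\mathcal C$ at every ramified place outside $\Sigma$, since those places lie in $\Sigma'$. The main obstacle is the precise form of the duality/exact sequence with mixed local conditions (free at $\Sigma\cup\Sigma'$, unramified elsewhere). I would take it from the Poitou--Tate nine-term sequence, or equivalently from the Greenberg--Wiles formalism. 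A small care point is that Chebotarev must be applied in $\Gal(N/k)$ and not just in $\Gal(M/k)$.
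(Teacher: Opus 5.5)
Your argument is correct. It does not follow the paper's route, because the paper gives no argument of its own: it simply quotes \cite[Proposition 5.2]{Mor25}.

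What you supply is a self-contained derivation of the statement. You apply Poitou--Tate for $\mu_2$ over $G_S$ with $S=\Sigma\cup\Sigma'$, relaxing the local conditions at auxiliary primes $w\in\Sigma'$. These primes are chosen by Chebotarev in $N$ with $\on{Frob}_w|_M\in\mathcal C$ and $\beta(\on{Frob}_w)\neq 0$ for each $\beta$ to be excluded. As a result, the only surviving dual characters are inflations of characters of $\Gal(M/k)$ trivial on $\mathcal C$, and hypothesis \eqref{eq:reciprocity-condition} then kills the obstruction.

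Compared with the citation, your route shows exactly where each hypothesis enters:
\begin{itemize}
\item the places above $2$ and $\infty$ lie in $\Sigma$, which Poitou--Tate requires;
\item the ramification of $M/k$ lies in $\Sigma$, so $N/k$ is unramified outside $\Sigma$ (and Frobenii at $w\notin\Sigma$ make sense in $N$), and characters of $\Gal(M/k)$ lie in your group $B$;
\item $\mathcal C\neq\emptyset$ is used when $\beta$ does not factor through $M$.
\end{itemize}

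Three presentational corrections:
\begin{itemize}
\item The left-hand term of your exact sequence should be $H^1(G_{\Sigma\cup\Sigma'},\mu_2)$, the classes unramified outside $\Sigma\cup\Sigma'$, not $H^1(k,\mu_2)$. The latter surjects onto $\bigoplus_{v\in\Sigma\cup\Sigma'}H^1(k_v,\mu_2)$ by weak approximation in $k^\times$, so the sequence as written is not exact. Your subsequent sentence makes clear this is what you intend.
\item Poitou--Tate prescribes all components on $\Sigma\cup\Sigma'$; your version leaves the $\Sigma'$-components free. The passage between them is the finite-dimensional identity $(A+U)^\perp=A^\perp\cap U^\perp$, where $A$ is the image of $H^1(G_{\Sigma\cup\Sigma'},\mu_2)$ and $U=\bigoplus_{w\in\Sigma'}H^1(k_w,\mu_2)$. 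Here $A^\perp$ is the image of the global $\Z/2$-classes by Poitou--Tate, and $U^\perp$ is the subspace supported on $\Sigma$. Write this line out.
\item For the conclusion you only need that the dual group is contained in the set of characters of $\Gal(M/k)$ trivial on $\mathcal C$. Equality, which you also prove, is not required.

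\end{itemize}
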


\begin{proof}
This is \cite[Proposition 5.2]{Mor25}.
\end{proof}

\begin{lemma}\label{lem:disjoint-auxiliary-fields}
For $i=1,2$, let $C_i:y^2=f_i(x)$ be hyperelliptic curves over $k$, with
$f_i$ separable of degree $2g_i+2\geq 6$, and let $J_i$ be their Jacobians.
Let $L_i$ be the splitting field of $f_i$, and assume that
$G_i\coloneqq\Gal(L_i/k)\simeq S_{2g_i+2}$. Assume that $L_1$ and $L_2$ are
linearly disjoint over $k$.

Let $t\in k^\times$. For $i=1,2$, we have the canonical identification 
of $\Ga_k$-modules $J_i^t[2]\simeq J_i[2]$; in particular $L_i=k(J_i^t[2])$. For $i=1,2$, assume
that \eqref{eq:selmer-intersection} holds for $J_i^t$, fix a Morgan datum for $J_i^t$, and let $M_i$ be the finite Galois extension of $k$
attached to this datum by \Cref{constr:morgan-field}. Then $M_1$ and $M_2$
are linearly disjoint over $k$. Moreover, every quadratic character
$\Gal(M_1M_2/k)\to\mu_2$ factors through $\Gal(L_1L_2/k)\simeq G_1\times G_2$.
\end{lemma}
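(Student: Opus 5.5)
Since $M_i/L_i$ is a $2$-extension and $L_1, L_2$ are linearly disjoint, the natural approach is a group-theoretic argument based on Lemma~\ref{lem:morgan-data}(a). First we check that its hypotheses hold for $J_i^t$: (A) follows from Proposition~\ref{lem:maximal-galois-image}(2), and (B) follows from Proposition~\ref{lem:maximal-galois-image-new}(1), taking a $(2g_i+1)$-cycle. So Lemma~\ref{lem:morgan-data}(a) tells us two things. The extension $M_i/L_i$ is a finite Galois $2$-extension. And every quadratic character of $\Gal(M_i/k)$ factors through $G_i\simeq S_{2g_i+2}$.

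To prove linear disjointness, set $F=M_1\cap M_2$, which is Galois over $k$. We aim to show $F=k$. First we show $F\cap L_1L_2=k$. Since $L_1$ and $L_2$ are linearly disjoint, $\Gal(L_1L_2/k)\simeq G_1\times G_2$. The field $F\cap L_1L_2$ is Galois over $k$ and corresponds to a normal subgroup of $G_1\times G_2$. Also $F\cap L_1L_2\subset M_1$, and we compare it with $L_1$ and $L_2$. The quotient $\Gal(F\cap L_1L_2/k)$ is a common quotient of $\Gal(M_1/k)$ and $\Gal(M_2/k)$.

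The key group-theoretic point concerns the solvable quotients of $\Gal(M_i/k)$. Any solvable quotient of $\Gal(M_i/k)$ is an extension of a quotient of $S_{2g_i+2}$ (namely $S_{2g_i+2}$, $\Z/2$, or trivial) by a $2$-group. Suppose first that $F\neq k$. Then $\Gal(F/k)$ is a nontrivial common quotient. Let $H_i$ be the image of $\Gal(M_i/L_i)$ in $\Gal(F/k)$, so $H_i$ is a normal $2$-subgroup with $\Gal(F/k)/H_i$ a quotient of $S_{2g_i+2}$. We show that $\Gal(F/k)$ has a nontrivial quotient of order $2$. Let $F'=F\cap L_1$. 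This is a Galois subextension of $L_1$, so $F'$ is $k$, $k(\sqrt{\Delta_1})$, or $L_1$. If $F'=L_1$, then $L_1\subset M_2$. But the composition factors of $\Gal(M_2/k)$ are $A_{2g_2+2}$ and copies of $\Z/2$, whereas $L_1$ requires the factor $A_{2g_1+2}$. Since $L_1\not\subset L_2$, the Jordan--Hölder multiplicities rule this out when $g_1\neq g_2$. When $g_1=g_2$, we would get a surjection $\Gal(M_2/k)\to S_{2g+2}$ with kernel different from $\Gal(M_2/L_2)$, and the images of the $2$-group $\Gal(M_2/L_2)$ and the complementary structure force a contradiction with simplicity of $A_{2g+2}$. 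So $\Gal(F/k)$ is a $2$-group extended by at most $\Z/2$, hence it is a $2$-group. A nontrivial $2$-group has a quotient $\Z/2$, which gives a quadratic character of $\Gal(M_1/k)$ and of $\Gal(M_2/k)$. By Lemma~\ref{lem:morgan-data}(a), this character factors through both $G_1$ and $G_2$. Hence the quadratic field lies in $L_1\cap L_2=k$, a contradiction. Therefore $F=k$.

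For the last assertion, identify $\Gal(M_1M_2/k)\simeq \Gal(M_1/k)\times\Gal(M_2/k)$ using the disjointness just proved. A quadratic character of the product is a product $\beta_1\beta_2$ of quadratic characters on the factors. Each $\beta_i$ factors through $G_i$ by Lemma~\ref{lem:morgan-data}(a), so $\beta_1\beta_2$ factors through $G_1\times G_2=\Gal(L_1L_2/k)$.

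The main obstacle is the step ruling out $L_1\subset M_2$ when $g_1=g_2$. The first thing to try there is to argue that the unique non-$2$-group composition factor $A_{2g+2}$ of $\Gal(M_2/k)$ appears exactly once. Then any quotient of $\Gal(M_2/k)$ isomorphic to $S_{2g+2}$ has kernel whose image in $G_2$ is a normal $2$-subgroup of $S_{2g+2}$, hence trivial. So that kernel contains $\Gal(M_2/L_2)$, which gives $L_1=L_2$, contradicting linear disjointness.
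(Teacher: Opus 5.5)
Your proof is correct, but it is organized differently from the paper's. The paper first proves the stronger intermediate fact $L_1\cap M_2=k$ (and symmetrically $L_2\cap M_1=k$) using the translation theorem of Galois theory. The field $F=L_1\cap M_2$ is Galois with $F\cap L_2=k$, so $\Gal(F/k)\simeq\Gal(FL_2/L_2)$ is a quotient of the $2$-group $\Gal(M_2/L_2)$. The only nontrivial $2$-group quotient of $S_{2g_1+2}$ has order $2$, so the only possibility besides $F=k$ is that $F$ is the quadratic subfield of $L_1$, and Lemma~\ref{lem:morgan-data}(a) excludes this. Then $M_1\cap M_2$ meets $L_1$ trivially, so its Galois group is a quotient of $\Gal(M_1/L_1)$, and any quadratic subfield gives the contradiction.

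You instead go directly to $F=M_1\cap M_2$ and only need the weaker fact $L_1\not\subseteq M_2$. That forces $[F\cap L_1:k]\le 2$, so $\Gal(F/k)$ is a $2$-group. You rule out $L_1\subseteq M_2$ by counting composition factors, with a case split on whether $g_1=g_2$. The paper's route is uniform and avoids Jordan--H\"older entirely. Your case split is also unnecessary. If $L_1\subseteq M_2$, the image of the normal $2$-subgroup $\Gal(M_2/L_2)$ in $\Gal(M_2/k)/\Gal(M_2/L_1)\simeq S_{2g_1+2}$ is a normal $2$-subgroup, hence trivial. So $L_1\subseteq L_2$, a contradiction for all $g_1,g_2$.

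Some points of presentation:
\begin{enumerate}
\item The announced step ``$F\cap L_1L_2=k$'' is never carried out, and it is not needed.
\item The sentence about ``the complementary structure'' in the $g_1=g_2$ case only becomes an argument once your final paragraph is supplied.
\item In that final paragraph, the kernel $N$ of the surjection onto $S_{2g+2}$ is shown to be \emph{contained in} $\Gal(M_2/L_2)$, since its image in $G_2$ is trivial; it is not shown to contain it. Equality follows by comparing orders, and either inclusion already contradicts $L_1\cap L_2=k$.
\end{enumerate}
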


\begin{proof}
Since $G_i\simeq S_{2g_i+2}$, with $2g_i+2\geq 6$, 
the hypotheses (A) and (B) of \Cref{lem:morgan-data} hold for $J_i^t$. Thus
\Cref{lem:morgan-data}\textup{(a)}, applied to $J_i^t$, shows that $M_i/L_i$
is a $2$-extension, and that every quadratic subextension of $M_i/k$
is contained in $L_i$.

We first show that $L_1\cap M_2=k$ as subfields of $\cl{k}$.  Set $F=L_1\cap M_2$; since $L_1/k$ and $M_2/k$ are Galois, $F/k$ is also Galois. Since $L_1$ and
$L_2$ are linearly disjoint over $k$ and $F\subset L_1$, we have $F\cap L_2=k$. The compositum $FL_2$ is contained in $M_2$, and $M_2/L_2$ is a $2$-extension. Hence
$FL_2/L_2$ is a $2$-extension. Since $F\cap L_2=k$ and $F/k$ is Galois, it follows that $F/k$ is a $2$-extension. On the other hand, $F\subset L_1$ and $\Gal(L_1/k)\simeq S_{2g_1+2}$. Since
$2g_1+2\geq 6$, the only nontrivial $2$-group quotient of $S_{2g_1+2}$ is its
quotient of order $2$, corresponding to the sign character. Therefore, if $F\neq k$,
then by Galois theory $F$ contains the quadratic subfield of $L_1$. But $F\subset M_2$, and
every quadratic subextension of $M_2/k$ is contained in $L_2$, contradicting
$L_1\cap L_2=k$. Hence $L_1\cap M_2=k$. The same argument gives
$L_2\cap M_1=k$.

Now set $F'=M_1\cap M_2$. Since
$F'\cap L_1\subset M_2\cap L_1=k$, the fields $F'$ and $L_1$ are linearly
disjoint over $k$. Moreover, $F'/k$ is Galois, since both $M_1/k$ and $M_2/k$
are Galois. The compositum $F'L_1$ is contained in $M_1$, and
$M_1/L_1$ is a finite Galois extension whose Galois group is a $2$-group. Since
$F'\cap L_1=k$, restriction gives an isomorphism
$\Gal(F'L_1/L_1)\simeq\Gal(F'/k)$. Hence $F'/k$ is a $2$-extension.

If $F'\neq k$, then $F'/k$ has a quadratic subextension $K/k$. Since
$K\subset M_1$ and $K\subset M_2$, \Cref{lem:morgan-data}\textup{(a)} gives
$K\subset L_1$ and $K\subset L_2$, contradicting $L_1\cap L_2=k$. Therefore
$M_1\cap M_2=k$. This shows that the Galois extensions $M_1/k$ and $M_2/k$ are linearly disjoint over $k$. In particular, $\Gal(M_1M_2/k)\simeq \Gal(M_1/k)\times\Gal(M_2/k)$.
By \Cref{lem:morgan-data}\textup{(a)} every quadratic character of $\Gal(M_i/k)$ factors through $G_i$, for $i=1,2$. It follows that every quadratic character of
$\Gal(M_1M_2/k)$ factors through
$G_1\times G_2\simeq\Gal(L_1L_2/k)$, as desired.
\end{proof}

\begin{defin}\label{def:admissible-pairing} Let $A$ be a principally polarized abelian variety over $k$, and let $\c\in\Sel^2(A)$ be the Poonen--Stoll class attached to the chosen principal polarization. We identify $\frac{1}{2}\Z/\Z$ with $\F_2$. Following Morgan \cite[\S~1.3]{Mor25}, a symmetric bilinear form \[ P\colon \Sel^2(A)\times\Sel^2(A)\longrightarrow\F_2 \] is called \emph{admissible} if $P(\c,x)=P(x,x)$ for all $x\in\Sel^2(A)$, and \[ P(\c,\c)\equiv \dim_{\F_2}\Sel^2(A)+\rk_2(A)+\dim_{\F_2}A(k)[2]\pmod 2. \] \end{defin}

The following elementary observation is implicit in Morgan's proof of
\cite[Theorem~1.14]{Mor25}.

\begin{lemma}\label{lem:admissible-determined-by-S}
Let $k$ be a number field, let $J/k$ and $\mc{S}=\mc{S}_0\sqcup \mc{S}_1$ be as in \Cref{def:morgan-datum}, and fix a Morgan datum \eqref{eq:morgan-datum} for $J$.
Let $P,Q\colon \Sel^2(J)\times\Sel^2(J)\to\F_2$ be admissible pairings. Suppose that $P(\mathfrak a_r,\mathfrak a_q)=Q(\mathfrak a_r,\mathfrak a_q)$ for every $\nu=(r,q)\in\mathcal S$. Then $P=Q$.
\end{lemma}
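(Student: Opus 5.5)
The plan is to look at the difference $D\coloneqq P-Q$ and show that all its values on the basis $T\sqcup T'=\{\mathfrak a_i\}_{i=1}^{n+m}$ of $\Sel^2(J)$ vanish. Since $P$ and $Q$ are symmetric and bilinear, so is $D$. Subtracting the two admissibility conditions of \Cref{def:admissible-pairing} gives
\[
D(\c,x)=D(x,x)\quad\text{for all } x\in\Sel^2(J),\qquad D(\c,\c)=0,
\]
because the right-hand side $\dim_{\F_2}\Sel^2(J)+\rk_2(J)+\dim_{\F_2}J(k)[2]$ is the same for $P$ and for $Q$. The hypothesis gives $D(\mathfrak a_r,\mathfrak a_q)=0$ for every $(r,q)\in\mathcal S$.

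The first step is to locate $\c$ inside $\Sel^2(J)$. By \Cref{prop:ps-class}(2), $\c=\w$ if $g$ is even and $\c=0$ if $g$ is odd. The class $\w$ is inflated from $H^1(G,J[2])$, since it is the connecting image of a sequence of $G$-modules, as in \Cref{prop:ps-class}(1). Hence $\c\in\Sel^2(J)\cap H^1(G,J[2])=\ang{\w}$ by \eqref{eq:selmer-intersection}. So either $\c=0$, or $m=1$ and $\c=\mathfrak a_{n+1}$, using property (3) of the Morgan datum.

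Next I would kill the off-diagonal and diagonal entries on $T$. For $1\le r<q\le n$ we have $(r,q)\in\mathcal S_1$, so $D(\mathfrak a_r,\mathfrak a_q)=0$. For the diagonal with $r\le n$, admissibility gives $D(\mathfrak a_r,\mathfrak a_r)=D(\c,\mathfrak a_r)$. This is $0$ if $\c=0$. If $\c=\mathfrak a_{n+1}$, it equals $D(\mathfrak a_{n+1},\mathfrak a_r)$, which vanishes because $(n+1,r)\in\mathcal S_0$.

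It remains to treat the entries involving $T'$ when $m=1$. The entries $D(\mathfrak a_{n+1},\mathfrak a_r)$ with $r\le n$ vanish by $\mathcal S_0$ and symmetry. For $D(\mathfrak a_{n+1},\mathfrak a_{n+1})$ there are two cases:
\begin{itemize}
\item if $\c=\mathfrak a_{n+1}$, it equals $D(\c,\c)=0$;
\item if $\c=0$, it equals $D(\c,\mathfrak a_{n+1})=0$.
\end{itemize}
Thus $D$ vanishes on all pairs of basis vectors, so $D=0$ and $P=Q$.

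The only point needing care is the first step, namely that $\c\in\ang{\w}=\langle T'\rangle$. The rest is bookkeeping over $\F_2$.
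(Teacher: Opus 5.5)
Your proof is correct and follows essentially the same route as the paper. Like the paper, you locate $\c\in\{0,\w\}$ via \Cref{prop:ps-class}(2). You then use $\mathcal S_1$ for the off-diagonal entries on $T$, $\mathcal S_0$ for the entries against $\mathfrak a_{n+1}$, and admissibility for the diagonal entries and the value at $\c$. Working with $D=P-Q$ and splitting on $\c=0$ versus $\c=\mathfrak a_{n+1}$, rather than on the parity of $g$, is a tidy repackaging that also spells out the odd-genus case the paper only calls straightforward. The inflation detour in your first step is harmless but unnecessary, since \Cref{prop:ps-class}(2) already gives $\c\in\{0,\w\}\subset\ang{\w}$.
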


\begin{proof}
It is enough to show that the values of $P$ on the basis
$T\sqcup T'=\{\mathfrak a_i\}_{i=1}^{n+m}$ are determined by its values on the
pairs indexed by $\mathcal S$.

Consider the case where $g$ is even. Then by Proposition \ref{prop:ps-class}(2) we have $\w=\c$.

Suppose first that $\w=\c=0$. Then $m=0$, so $T'=\emptyset$ and
$T=\{\mathfrak a_1,\ldots,\mathfrak a_n\}$ is a basis of $\Sel^2(J)$. The
values on pairs $\mathfrak a_r,\mathfrak a_q$ with $1\leq r<q\leq n$ are
precisely the values indexed by $\mathcal S=\mathcal S_1$. Moreover,
admissibility gives $P(x,x)=P(\c,x)=0$ for every $x\in\Sel^2(J)$, so all
diagonal values are determined. Thus $P$ is determined by its values on
$\mathcal S$ in this case.

Suppose now that $\w=\c\neq 0$. Then $m=1$ and, with our notation,
$\mathfrak a_{n+1}=\c$. The values on pairs $\mathfrak a_r,\mathfrak a_q$
with $1\leq r<q\leq n$ are precisely the values indexed by $\mathcal S_1$.
The values $P(\c,\mathfrak a_q)$, for $1\leq q\leq n$, are precisely the
values indexed by $\mathcal S_0$. These determine the
diagonal values $P(\mathfrak a_q,\mathfrak a_q)$, since
$P(\mathfrak a_q,\mathfrak a_q)=P(\c,\mathfrak a_q)$
by the admissibility of $P$. Finally
\[
P(\c,\c)\equiv \dim_{\F_2}\Sel^2(A)+\rk_2(A)\pmod 2
\]
is also fixed by the admissibility of $P$. Thus $P$ is determined by its values on $\mathcal S$ also in this case.

Now consider the case where $g$ is odd. By Proposition \ref{prop:ps-class}(2) we have $\c=0$, so
any admissible pairing is alternating.
In this case the proof is straightforward.
\end{proof}

The following proposition is a variant of \cite[Theorem 1.14]{Mor25}. 

\begin{prop}
\label{prop:simultaneous-morgan}
Let $k$ be a number field. For $i=1,2$, let $C_i:y^2=f_i(x)$ be hyperelliptic curves with $f_i$ separable of degree $2g_i+2\geq 6$, and let $J_i$ be their Jacobians. Assume that $G_i\coloneqq\Gal(f_i)\simeq S_{2g_i+2}$, and that the splitting fields $L_1$ and $L_2$ of $f_1$ and $f_2$ are linearly disjoint over $k$. Suppose given $t\in\mathfrak S$. For $i=1,2$, let \[P_i\colon\Sel^2(J_i^t/k)\times\Sel^2(J_i^t/k)\longrightarrow \F_2\] be an admissible pairing (as in \Cref{def:admissible-pairing}). 
Then there exists $s\in\mathfrak S$ such that, for $i=1,2$, one has $\Sel^2(J_i^s/k)=\Sel^2(J_i^t/k)$ inside $H^1(k,J_i[2])$, one has $\rk_2(J_i^s/k)\equiv\rk_2(J_i^t/k)\pmod 2$, and the Cassels--Tate pairing on $\Sel^2(J_i^s/k)$ is equal to $P_i$.
\end{prop}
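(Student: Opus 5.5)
We will obtain $s$ as $s=t\,d$, where $d\in k^\times$ is chosen so that the quadratic character $\chi$ of $k(\sqrt d)/k$ satisfies simultaneously, for $i=1,2$, the hypotheses of \Cref{lem:morgan-data}(c),(d) for $J_i^t$. Then $J_i^s=(J_i^t)^\chi$.

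First we check \eqref{eq:selmer-intersection} for $J_i^t$. Since $t\in\mathfrak S$, the curve $C_i^t$ is everywhere locally soluble. Also $\Gal(k(J_i^t[2])/k)=G_i\simeq S_{2g_i+2}$ by \Cref{prop:ps-class}(3). Hence \Cref{lem:maximal-galois-image}(3) gives \eqref{eq:selmer-intersection}, and \Cref{lem:maximal-galois-image}(2) together with \Cref{lem:maximal-galois-image-new}(1) give hypotheses (A),(B). We fix Morgan data for $J_1^t$ and $J_2^t$, with index sets $\mathcal S^{(i)}$, fields $M_i$ and functions $\psi^{(i)}_\nu$. We then set $\epsilon^{(i)}_\nu=P_i(\mathfrak a_r,\mathfrak a_q)-\langle\mathfrak a_r,\mathfrak a_q\rangle_{J_i^t}$ for $\nu=(r,q)\in\mathcal S^{(i)}$.

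Next we choose $\Sigma_0$ and the target conjugacy classes. Let $\Sigma_0$ be a finite set of places containing the archimedean places, the places above $2$, the places of bad reduction of $J_1^t$ and $J_2^t$, and the places ramified in $M_1M_2$. By \Cref{lem:disjoint-auxiliary-fields}, $\Gal(M_1M_2/k)=\Gal(M_1/k)\times\Gal(M_2/k)$. Let $h_i\in G_i$ be a $(2g_i+1)$-cycle, so that $J_i[2]^{h_i}=0$ by \Cref{lem:maximal-galois-image-new}(1). By \Cref{lem:morgan-data}(b) there is $\sigma_i\in\Gal(M_i/k)$ over $h_i$ with $\psi^{(i)}_\nu(\sigma_i)=\epsilon^{(i)}_\nu$ for all $\nu$. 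Put $\sigma=(\sigma_1,\sigma_2)$, and let $\mathcal C$ be its conjugacy class. The functions $\psi$ are conjugation invariant (as in Morgan), so every element of $\mathcal C$ gives the same values.

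The plan is to produce $\chi$ trivial at all $v\in\Sigma_0$, ramified outside $\Sigma_0$ at exactly one prime $v$, with $\on{Frob}_v\in\mathcal C$. We apply \Cref{lem:morgan-global-twisting} with $\chi_v=1$ for $v\in\Sigma_0$. Condition \eqref{eq:reciprocity-condition} then holds trivially, so we get a $\chi$ trivial on $\Sigma_0$ all of whose ramified primes outside $\Sigma_0$ have Frobenius in $\mathcal C$.

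\emph{Main obstacle: parity of the number of ramified primes.} Part (d) of \Cref{lem:morgan-data} sums $\psi(\on{Frob}_v)$ over ramified primes, so we need their number $N$ to be odd. We control $N$ using a quadratic character $\beta$ of $\Gal(M_1M_2/k)$ that is nontrivial on $\mathcal C$. One choice is $\beta=\mathrm{sgn}_1$ pulled back via $G_1$; this is nontrivial on $\sigma$ because $h_1$ is an even-length-odd cycle, i.e. an even permutation, so this may fail. Instead take $\beta$ to be the character through $G_1\times G_2$ chosen, if possible, so that $\beta(\sigma)\neq0$, adjusting the choice of $h_i$ (for instance allowing Frobenius over any fixed-point-free class with $\beta$ nonzero) if needed.

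Reciprocity, $\sum_v\on{inv}_v(\chi\cup\beta)=0$, then forces $N\beta(\sigma)\equiv0$. So we must instead arrange $\chi_v$ at $\Sigma_0$ to be nontrivial in a controlled way, or add an auxiliary prime. This is the delicate point: one should check, as in Morgan's proof of \cite[Theorem 1.14]{Mor25}, that either $\mathcal C$ can be chosen so that every quadratic character is trivial on it, or that the local conditions compensate. We expect to resolve it by taking $\sigma$ in the kernel of all quadratic characters (the $h_i$ are even). Then any $N$ is allowed, and we force $N$ odd by prescribing in addition one auxiliary prime $v_0\notin\Sigma_0$ with $\on{Frob}_{v_0}\in\mathcal C$, $\chi_{v_0}$ ramified, and including $v_0$ in $\Sigma$.

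With $\chi$ in hand, \Cref{lem:morgan-data}(c) gives $\Sel^2(J_i^s)=\Sel^2(J_i^t)$ and the rank parity. Then (d) gives $\langle\mathfrak a_r,\mathfrak a_q\rangle_{J_i^s}=P_i(\mathfrak a_r,\mathfrak a_q)$ on $\mathcal S^{(i)}$, provided the values at the ramified primes sum to $\epsilon$. The Cassels--Tate pairing of $J_i^s$ is admissible by \Cref{prop:ps-class}(5),(7). Hence it equals $P_i$ by \Cref{lem:admissible-determined-by-S}. Finally, $s\in\mathfrak S$ for three reasons: $\chi$ is trivial at $\Sigma_0$, so $C_i^s\simeq C_i^t$ locally there; at ramified primes \Cref{lem:maximal-galois-image-new}(2) applies; and at the remaining primes good reduction with an unramified twist, together with \Cref{lem:weil-bound} and Hensel, gives local points.
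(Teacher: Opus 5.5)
There is a genuine gap at the step where you try to make $N=\#(\operatorname{Ram}(\chi)\setminus\Sigma_0)$ odd. \Cref{lem:morgan-global-twisting} only guarantees two things: $\chi$ has the prescribed restrictions on $\Sigma$, and every ramified place \emph{outside} $\Sigma$ has Frobenius in $\mathcal C$. It gives no control over how many such extra places occur. (With $\chi_v=1$ on $\Sigma_0$ alone it may simply return $\chi=1$.) You took $\mathcal C$ to be the class of $\sigma$, on which $\psi^{(i)}_\nu=\epsilon^{(i)}_\nu$. So \Cref{lem:morgan-data}(d) gives $\langle\mathfrak a_r,\mathfrak a_q\rangle_{J_i^s}=\langle\mathfrak a_r,\mathfrak a_q\rangle_{J_i^t}+N\epsilon^{(i)}_\nu$ with $N=1+N'$, where $N'$ counts the uncontrolled extra primes. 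Putting $v_0$ into $\Sigma$ does not fix the parity of $N$. Your proviso ``provided the values at the ramified primes sum to $\epsilon$'' is exactly the unproved point. Your even-permutation observation correctly shows that reciprocity imposes no parity constraint, but for the same reason nothing forces $N$ to be odd. The goal of ``exactly one ramified prime'' is likewise not something the cited lemma delivers.

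The missing idea is to separate roles: the uncontrolled primes should carry \emph{zero} $\psi$-values, and the desired values should sit at prescribed primes in $\Sigma$. The paper proceeds as follows.
\begin{itemize}
\item It takes $\mathcal C$ to be the set of $(\rho_1,\rho_2)$ with $\bar\rho_i$ conjugate to $h_i$ and $\psi_{i,\nu}(\rho_i)=0$ for all $\nu$. This set is nonempty by \Cref{lem:morgan-data}(b) and is a union of conjugacy classes.
\item By Chebotarev it picks $\mathfrak p_1,\mathfrak p_2\notin\Sigma_0$ with Frobenius $(\sigma_1^*,\sigma_2^0)$ and $(\sigma_1^0,\sigma_2^*)$, where $\psi_{i,\nu}(\sigma_i^*)=\delta_{i,\nu}$ and $\psi_{i,\nu}(\sigma_i^0)=0$.
\item It prescribes ramified $\chi_{\mathfrak p_j}$ at these primes and $\chi_v=1$ on $\Sigma_0$.
\item Reciprocity holds because every quadratic $\beta$ factors through $G_1\times G_2$ (\Cref{lem:disjoint-auxiliary-fields}), and $\operatorname{Frob}_{\mathfrak p_j}$ has the same image $(h_1,h_2)$ as the elements of $\mathcal C$.
\end{itemize}
The sum in (d) is then exactly $\delta_{i,\nu}$, however many extra primes appear. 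A single prime with Frobenius $(\sigma_1^*,\sigma_2^*)$ would work equally well.

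A smaller point concerns your final local-solubility step at places where $\chi$ is unramified but nontrivial. For it to go through, $\Sigma_0$ must also contain:
\begin{itemize}
\item the places with residue field too small for \Cref{lem:weil-bound};
\item the places where $t$ is not a unit;
\item the places of bad reduction of the models $y^2=f_i(x)$.
\end{itemize}
Bad reduction of $J_i^t$ alone does not suffice for this.
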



\begin{proof}[Proof of \Cref{prop:simultaneous-morgan}]
We note that, for $i=1,2$, property \eqref{eq:selmer-intersection} holds:
\[
\Sel^2(J_i^t/k)\cap H^1(G_i,J_i[2])=\langle\w_i\rangle.
\]
This follows from $t\in \mathfrak S$
and Proposition \ref{lem:maximal-galois-image}(3).

Observe that the principally polarized abelian varieties $J_1^t$ and $J_2^t$
satisfy the assumptions of \Cref{lem:morgan-data}. Indeed, for $i=1,2$, we
have $J_i^t[2]\simeq J_i[2]$, so that $k(J_i^t[2])=L_i$, and hence
$G_i=\Gal(L_i/k)\simeq S_{2g_i+2}$. Thus Assumptions \textup{(A)} and
\textup{(B)} of \Cref{lem:morgan-data} hold for $J_i^t$, while
\eqref{eq:selmer-intersection} holds by hypothesis. For each $i=1,2$, choose
a Morgan datum $(T_i,T_i',\{a_{i,r}\}_r,\{\gamma_{i,\nu}\}_{\nu\in\mathcal S_i})$
for $J_i^t$, in the sense of \Cref{def:morgan-datum}, where $\mathcal S_i=(\mathcal S_i)_0\sqcup(\mathcal S_i)_1$ is the corresponding finite indexing set. Applying
\Cref{constr:morgan-field} we obtain, for $i=1,2$, a finite Galois extension
$M_i=k_{i,T_i,T_i'}/k$ and functions
\[
\psi_{i,\nu}\colon \{\sigma\in\Gal(M_i/k): J_i[2]^{\bar\sigma}=0\}
\longrightarrow \F_2,
\qquad \nu\in\mathcal S_i,
\]
where for every $\sigma\in \on{Gal}(M_i/k)$ we denote by $\bar\sigma$ the image of $\sigma$ in $G_i$.

Let $M=M_1M_2$. By \Cref{lem:disjoint-auxiliary-fields}, we have $\Gal(M/k)\simeq\Gal(M_1/k)\times\Gal(M_2/k)$. We shall write an element of $\Gal(M/k)$ as a pair $(\sigma_1,\sigma_2)$, with $\sigma_i\in\Gal(M_i/k)$.

For $i=1,2$, choose a cycle $h_i\in G_i\simeq S_{2g_i+2}$ of length $2g_i+1$. By \Cref{lem:maximal-galois-image-new}(1), one has $J_i[2]^{h_i}=0$. For $\nu=(r,q)\in\mathcal S_i$, define
\[
\delta_{i,\nu} \coloneqq 
\langle \mathfrak a_{i,r},\mathfrak a_{i,q} \rangle_{J_i^t}
+
P_i(\mathfrak a_{i,r},\mathfrak a_{i,q})
\in\F_2.
\]
By \Cref{lem:morgan-data}\textup{(b)}, for each $i=1,2$ we can choose elements $\sigma_i^*,\sigma_i^0\in\Gal(M_i/k)$, both mapping to $h_i\in G_i$, such that $\psi_{i,\nu}(\sigma_i^*)=\delta_{i,\nu}$ and $\psi_{i,\nu}(\sigma_i^0)=0$ for all $\nu\in\mathcal S_i$.
Define $\tau_1\coloneqq (\sigma_1^*,\sigma_2^0)$ and $\tau_2\coloneqq (\sigma_1^0,\sigma_2^*)$, viewed as elements of $\Gal(M/k)$.

Choose a finite set of places $\Sigma_0$ containing
\begin{itemize}
    \item[(i)] all archimedean places and all places above $2$,
    \item[(ii)] all places ramified in $M/k$,
    \item[(iii)]  all places of bad reduction for $C_1$ or $C_2$,
    \item[(iv)] every place $v$ such that $t\notin\O_v^\times$, and
    \item[(v)] all places $v$ for which the residue field is too small to guarantee a smooth $\F_v$-point on every smooth projective curve over $\F_v$ of the form $y^2=a f_i(x)$, with $a\in\F_v^\times$ and $i=1,2$.
\end{itemize}
The last condition excludes only finitely many places, by \Cref{lem:weil-bound}.

By the Chebotarev density theorem, we may choose distinct primes $\mathfrak p_1,\mathfrak p_2\notin\Sigma_0$ such that $\on{Frob}_{\mathfrak p_i}$ is conjugate to $\tau_i$ in $\Gal(M/k)$, for $i=1,2$. We define $\Sigma\coloneqq \Sigma_0\cup\{\mathfrak p_1,\mathfrak p_2\}$.

Define a subset $\mathcal C\subset\Gal(M/k)$ as follows. An element $\tau=(\rho_1,\rho_2)$ belongs to $\mathcal C$ if and only if, for $i=1,2$, the image of $\rho_i$ in $G_i$ is conjugate to $h_i$, and $\psi_{i,\nu}(\rho_i)=0$ for every $\nu\in\mathcal S_i$. The set $\mathcal C$ is nonempty, since it contains $(\sigma_1^0,\sigma_2^0)$. It is a union of conjugacy classes, because conjugation preserves the conjugacy class of the image in $G_i$, and the functions $\psi_{i,\nu}$ are constant on conjugacy classes; see \cite[Remark 3.2]{Mor25}.

We now prescribe local quadratic characters at the places of $\Sigma$. Set $\chi_v=1$ for every $v\in\Sigma_0$. For $j=1,2$, choose a nontrivial ramified quadratic character $\chi_{\mathfrak p_j}\colon G_{k_{\mathfrak p_j}}\to\mu_2$. We verify condition \eqref{eq:reciprocity-condition} in \Cref{lem:morgan-global-twisting}. Let $\beta\colon \Gal(M/k)\to\mu_2$ be a quadratic character which is trivial on $\mathcal C$. By \Cref{lem:disjoint-auxiliary-fields}, the character $\beta$ factors through $G_1\times G_2$. Since $\sigma_i^*,\sigma_i^0\in\Gal(M_i/k)$ map to $h_i$, for each $i=1,2$, the images of $\tau_1$ and $\tau_2$ in $G_1\times G_2$ are both equal to $(h_1,h_2)$. Since $\mathcal C$ contains an element with image $(h_1,h_2)$, and since $\beta$ is trivial on $\mathcal C$, we get $\beta(\tau_1)=\beta(\tau_2)=1\in \mu_2$. Equivalently, $\operatorname{res}_{\mathfrak p_j}\beta$ is an unramified quadratic character taking the value $1\in \mu_2$ on $\on{Frob}_{\mathfrak p_j}$ for $j=1,2$. Hence $\operatorname{res}_{\mathfrak p_j}\beta$ is trivial, and therefore $\operatorname{inv}_{\mathfrak p_j}(\chi_{\mathfrak p_j}\cup\operatorname{res}_{\mathfrak p_j}\beta)=0$ for $j=1,2$. At all places of $\Sigma_0$, the prescribed local character is trivial, so the corresponding local invariants are zero. Hence condition \eqref{eq:reciprocity-condition} holds:
\[
\sum_{v\in\Sigma}
\operatorname{inv}_v(\chi_v\cup\operatorname{res}_v\beta)=0.
\]
By \Cref{lem:morgan-global-twisting}, there exists a global quadratic character $\chi\colon \Gamma_k\to\mu_2$ such that $\operatorname{res}_v(\chi)=1$ for all $v\in\Sigma_0$, such that $\operatorname{res}_{\mathfrak p_j}(\chi)=\chi_{\mathfrak p_j}$ for $j=1,2$, and such that every place $v\notin\Sigma$ at which $\chi$ is ramified satisfies $\on{Frob}_v\in\mathcal C$. Let $u\in k^\times/k^{\times 2}$ be the square-class corresponding to $\chi$, and set $s=tu$.

We first prove the assertions about Selmer groups and $2^\infty$-Selmer ranks. Fix $i\in\{1,2\}$. The character $\chi$ is trivial at every place of $\Sigma_0$. If $v\notin\Sigma_0$ and $\chi$ is ramified at $v$, then either $v=\mathfrak p_1$, or $v=\mathfrak p_2$, or $v\notin\Sigma$ and $\on{Frob}_v\in\mathcal C$. In all cases, the image of $\on{Frob}_v$ in $G_i$ is conjugate to $h_i$, and therefore $J_i[2]^{\on{Frob}_v}=0$ by \Cref{lem:maximal-galois-image-new}(1). Applying \Cref{lem:morgan-data}\textup{(c)} to $A=J_i^t$, for $i=1,2$, we obtain $\Sel^2(J_i^s/k)=\Sel^2(J_i^t/k)$ inside $H^1(k,J_i[2])$, and $\rk_2(J_i^s/k)\equiv\rk_2(J_i^t/k)\pmod 2$.

We now compute the Cassels--Tate pairings. Fix $i\in\{1,2\}$ and $\nu=(r,q)\in\mathcal S_i$. By \Cref{lem:morgan-data}\textup{(d)}, we have
\[
\langle\mathfrak a_{i,r},\mathfrak a_{i,q}\rangle_{J_i^s}
=
\langle\mathfrak a_{i,r},\mathfrak a_{i,q}\rangle_{J_i^t}
+
\sum_{v\in\operatorname{Ram}(\chi)\setminus\Sigma_0}
\psi_{i,\nu}(\on{Frob}_v).
\]
Suppose first that $i=1$. The contribution at $\mathfrak p_1$ is $\psi_{1,\nu}(\tau_1)=\delta_{1,\nu}$, while the contribution at $\mathfrak p_2$ is $\psi_{1,\nu}(\tau_2)=0$. Every further ramified prime outside $\Sigma$ has Frobenius in $\mathcal C$, so its $\psi_{1,\nu}$-value is zero. Therefore $\langle\mathfrak a_{1,r},\mathfrak a_{1,q}\rangle_{J_1^s}=P_1(\mathfrak a_{1,r},\mathfrak a_{1,q})$. The same argument for $i=2$, with $\mathfrak p_1$ and $\mathfrak p_2$ interchanged, gives $\langle\mathfrak a_{2,r},\mathfrak a_{2,q}\rangle_{J_2^s}=P_2(\mathfrak a_{2,r},\mathfrak a_{2,q})$ for every $\nu=(r,q)\in\mathcal S_2$.

Both the Cassels--Tate pairing on $\Sel^2(J_i^s/k)$ and $P_i$ are admissible. By Lemma \ref{lem:admissible-determined-by-S} an admissible pairing is determined by its values on the pairs indexed by $\mathcal S_i$, hence the Cassels--Tate pairing on $\Sel^2(J_i^s/k)$ is equal to $P_i$.

It remains to prove that $s\in\mathfrak S$. Equivalently, we must show that $C_i^s(k_v)\neq\emptyset$ for every place $v$ of $k$ and for $i=1,2$.

Let $v\in\Sigma_0$. Since $\chi_v$ is trivial, $u$ is a square in $k_v^\times$. Hence $C_i^s\simeq C_i^t$ over $k_v$. Since $t\in\mathfrak S$, we have $C_i^t(k_v)\neq\emptyset$, and therefore $C_i^s(k_v)\neq\emptyset$.

Now let $v\notin\Sigma_0$ be a place where $\chi$ is ramified. Then either $v=\mathfrak p_1$, or $v=\mathfrak p_2$, or $v\notin\Sigma$ and $\on{Frob}_v\in\mathcal C$. In all cases, the image of $\on{Frob}_v$ in $G_i\simeq S_{2g_i+2}$ is a cycle of length $2g_i+1$. By \Cref{lem:maximal-galois-image-new}(2), $f_i$ has a root $\alpha\in k_v$. Thus the point $(\alpha,0)$ lies on $C_i^s:y^2=s f_i(x)$, so $C_i^s(k_v)\neq\emptyset$.

Finally suppose that $v\notin\Sigma_0$ and that $\chi$ is unramified at $v$. Since $v\notin\Sigma_0$, the curve $C_i$ has good reduction at $v$, the leading coefficient of $f_i$ is a unit at $v$, and $t\in\O_v^\times$. Since $\chi$ is unramified, $u$ is represented by a unit in $k_v^\times$. Thus $s=tu$ is represented by a unit at $v$, and $C_i^s$ has good reduction at $v$. By the choice of $\Sigma_0$, the special fibre of $C_i^s$ has a smooth $\F_v$-point. Hensel's lemma lifts this point to a $k_v$-point on $C_i^s$.

We have shown that $C_i^s(k_v)\neq\emptyset$ for every $v$ and for $i=1,2$. Hence $s\in\mathfrak S$, as desired.
\end{proof}

\section{Proof of Theorem~\ref{t1}}
\label{S6}

\begin{lemma}\label{lemma1}
Let $C$ be a smooth, projective, geometrically integral curve over a number field $k$.
If $\Pic^1(C_{k_v})\neq\emptyset$ for all places $v$ of $k$
and $\Pic^1_{C/k}(k)\neq\emptyset$, then $\Pic^1(C)\neq\emptyset$, that is,
$C$ has a zero-cycle of degree $1$.    
\end{lemma}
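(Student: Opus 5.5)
The plan is the classical Brauer group argument comparing $\Pic(C)$ with $\Pic_{C/k}(k)$. Write $\bar C=C\times_k\bar k$. The Hochschild--Serre spectral sequence for $\G_m$ on $C$, together with $\bar k[C]^\times=\bar k^\times$ and $\Br(\bar C)=0$ (Tsen), yields the exact sequence
\[
0\to \Pic(C)\to \Pic(\bar C)^{\Gamma_k}\xrightarrow{\ \delta\ } \Br(k)\to \Br(C).
\]
Since $\Pic_{C/k}(k)=\Pic(\bar C)^{\Gamma_k}$ for a curve over a field, the same sequence exists over every completion $k_v$, and these sequences are compatible with restriction. Fix $\alpha\in\Pic^1_{C/k}(k)$. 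It suffices to show $\delta(\alpha)=0$, since then $\alpha$ comes from a $k$-rational divisor class of degree $1$.

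First I would show that $\delta(\alpha)$ is locally trivial. For each place $v$ let $\alpha_v$ be the image of $\alpha$ in $\Pic^1_{C/k}(k_v)$. By hypothesis there is a class $\beta_v\in\Pic^1(C_{k_v})$. The difference $\alpha_v-\beta_v$ lies in $J(k_v)$, where $J=\Pic^0_{C/k}$, so it is not immediate that it comes from $\Pic^0(C_{k_v})$. This is handled by the local form of the sequence: since $\beta_v$ comes from $\Pic(C_{k_v})$, we have $\delta_v(\beta_v)=0$, and therefore $\delta_v(\alpha_v)=\delta_v(\alpha_v-\beta_v)$. The map $\Br(k_v)\to\Br(C_{k_v})$ is injective, because $C$ has a zero-cycle of degree $1$ over $k_v$ (namely $\beta_v$ is represented by a divisor) and corestriction from closed points splits the map. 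Hence the image of $\delta_v$ is $\ker(\Br(k_v)\to\Br(C_{k_v}))=0$, so $\delta_v(\alpha_v)=0$. By compatibility, $\operatorname{res}_v\delta(\alpha)=\delta_v(\alpha_v)=0$ for every $v$.

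The local-global principle for $\Br(k)$, which follows from the injectivity of $\Br(k)\to\bigoplus_v\Br(k_v)$ in the Albert--Brauer--Hasse--Noether theorem, then gives $\delta(\alpha)=0$. Therefore $\alpha$ lies in the image of $\Pic(C)$, and it has degree $1$. Hence $\Pic^1(C)\neq\emptyset$.

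The only delicate point is the local step. One must use the hypothesis $\Pic^1(C_{k_v})\neq\emptyset$ through the injectivity of $\Br(k_v)\to\Br(C_{k_v})$, not through $J(k_v)$. The alternative route via the index-period relation is unnecessary. The rest of the argument consists of standard identifications.
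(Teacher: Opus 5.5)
Your proof is correct and follows essentially the same route as the paper: the Hochschild--Serre sequence $0\to\Pic(C)\to\Pic(C_{\bar k})^{\Gamma_k}\to\Br(k)\to\Br(C)$, injectivity of $\Br(k_v)\to\Br(C_{k_v})$ coming from a local zero-cycle of degree $1$, and the Albert--Brauer--Hasse--Noether theorem. The paper phrases this as global injectivity of $\Br(k)\to\Br(C)$, so that every class in $\Pic(C_{\bar k})^{\Gamma_k}$ lifts; this is the same argument as your local vanishing of $\delta(\alpha)$, and the auxiliary class $\beta_v$ in your local step is not actually needed.
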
 

\begin{proof} We have $\Pic_{C/k}(k)=\Pic(C_{\bar k})^{\Gamma_k}$. The standard
Hochschild--Serre spectral sequence gives rise to an exact sequence
$$0\to\Pic(C)\to \Pic(C_{\bar k})^{\Gamma_k}\to \Br(k)\to\Br(C).$$
By the Albert--Brauer--Hasse--Noether theorem, the natural map $\Br(k)\to\oplus_v\Br(k_v)$ is injective. We obtain that the map $\Br(k)\to\Br(C)$ 
is injective when $C$ has a zero-cycle of degree 1 over all completions of $k$.
This implies that every element of $\Pic(C_{\bar k})^{\Gamma_k}$ of degree 1 lifts to an element of 
$\Pic(C)$ of degree 1. 
\end{proof}

\begin{thm}\label{thm:c1-c2-zero-in-sha}
We place ourselves in \Cref{sit1}. Assume that $X({\bf A}_k)\neq\emptyset$ and that $\Sha(J_i^u)[2^\infty]$ is finite for every $u\in\mathfrak S$ and $i=1,2$. Then there exists $s\in\mathfrak S$ such that $(\w_1,\w_2)$ maps to zero in $\Sha(J_1^s\times_k J_2^s)$. In particular, $C_1^s\times_k C_2^s$ has a zero-cycle of degree $1$.
\end{thm}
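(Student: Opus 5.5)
The plan is to choose the twist in two stages. First I fix $t\in\mathfrak S$ with odd-dimensional $2$-Selmer groups. Then I move to a second twist $s$ with the same Selmer groups, but whose Cassels--Tate pairings have radical exactly $\langle\w_i\rangle$. A dimension count then forces $\w_i$ to come from $J_i^s(k)/2J_i^s(k)$, so $\w_i$ dies in $\Sha$.

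\textbf{Step 1 (odd Selmer dimensions).} By \Cref{prop:odd-odd} there is $t\in\mathfrak S$ with $\dim_{\F_2}\Sel^2(J_i^t)$ odd for $i=1,2$. Since $C_i^t$ is everywhere locally soluble, \Cref{lem:maximal-galois-image}(3) (applied to $C_i^t$, which has the same $2$-torsion) gives $\w_i\in\Sel^2(J_i^t)$. Moreover $\w_i\neq 0$, since it is the image of the generator of $H^1(G_i,J_i[2])\simeq\F_2$ under inflation, which is injective. \Cref{cor:ps-class-twists} gives $\dim\Sel^2(J_i^t)+\rk_2(J_i^t)\equiv 0 \pmod 2$, so $\rk_2(J_i^t)$ is odd.

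\textbf{Step 2 (choosing admissible pairings).} Put $V_i=\Sel^2(J_i^t)$. Then $V_i/\langle\w_i\rangle$ has even dimension, so it carries a nondegenerate alternating form. Let $P_i$ be its pullback to $V_i$; it is alternating with radical exactly $\langle\w_i\rangle$. I check that $P_i$ is admissible in the sense of \Cref{def:admissible-pairing}.
\begin{itemize}
\item If $g_i$ is odd, then $\c=0$ by \Cref{prop:ps-class}(2). Both admissibility conditions reduce to $P_i$ being alternating and to $\dim V_i+\rk_2(J_i^t)\equiv 0$, which holds by Step~1 (note $J_i^t(k)[2]=0$).
\item If $g_i$ is even, then $\c=\w_i$. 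In this case $P_i(\c,x)=0=P_i(x,x)$ for all $x$, and $P_i(\c,\c)=0\equiv\dim V_i+\rk_2(J_i^t)$.
\end{itemize}

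\textbf{Step 3 (twisting).} \Cref{prop:simultaneous-morgan} now yields $s\in\mathfrak S$ with $\Sel^2(J_i^s)=V_i$, $\rk_2(J_i^s)\equiv\rk_2(J_i^t)$ odd, and Cassels--Tate pairing on $\Sel^2(J_i^s)$ equal to $P_i$. By hypothesis $\Sha(J_i^s)[2^\infty]$ is finite, so $\rk_2(J_i^s)=\rk J_i^s(k)=:r_i\geq 1$. As $J_i^s(k)[2]=0$, the image of $J_i^s(k)/2J_i^s(k)$ in $\Sel^2$ has dimension $r_i\ge1$. This image lies in the radical of the Cassels--Tate pairing restricted to $\Sel^2$, because the pairing factors through $\Sha$. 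That radical is $\langle\w_i\rangle$, which is one-dimensional. Hence $r_i=1$ and $\w_i$ lies in the image of $J_i^s(k)/2J_i^s(k)$, so $\w_i$ maps to $0$ in $\Sha(J_i^s)$. Since $\Sha$ of a product is the product of the $\Sha$'s, $(\w_1,\w_2)$ maps to zero in $\Sha(J_1^s\times_k J_2^s)$.

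\textbf{Step 4 (zero-cycles).} The image of $\w_i$ in $H^1(k,J_i^s)$ is $[\Pic^1_{C_i^s/k}]$ by \Cref{prop:ps-class}(4), and Step~3 shows this class is trivial. So $\Pic^1_{C_i^s/k}(k)\neq\emptyset$. Since $C_i^s$ is everywhere locally soluble, \Cref{lemma1} gives a zero-cycle $z_i$ of degree $1$ on $C_i^s$. The exterior product $z_1\times z_2$ is then a zero-cycle of degree $1$ on $C_1^s\times_k C_2^s$.

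\textbf{Main obstacle.} The delicate point is Step~3. One has to ensure that a radical of dimension one really forces $\w_i$ into the Mordell--Weil image, rather than merely into $2\Sha[4]$. This is exactly where the odd-rank parity from Steps~1 and~2 and the finiteness of $\Sha[2^\infty]$ (to identify $\rk_2$ with the actual rank) are used.
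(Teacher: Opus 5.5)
Your proof is correct and follows the paper's argument: odd Selmer dimensions from \Cref{prop:odd-odd}, an admissible pairing $P_i$ with radical exactly $\langle\w_i\rangle$, \Cref{prop:simultaneous-morgan}, and then the inclusion $\ker(\Sel^2(J_i^s)\to\Sha(J_i^s)[2])\subset\langle\w_i\rangle$. The only difference is in the final step, and it is cosmetic. You show this kernel is nonzero directly, using that $\rk_2(J_i^s)$ is odd, which gives $\rk J_i^s(k)=1$. The paper instead rules out a zero kernel by contradiction with the Poonen--Stoll parity of $\dim_{\F_2}\Sha(J_i^s)_{\rm nd}[2]$; this is the same parity input repackaged.
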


\begin{proof}
By \Cref{prop:odd-odd}, there exists $t\in\mathfrak S$ such that both $\dim_{\F_2}\Sel^2(J_1^t)$ and $\dim_{\F_2}\Sel^2(J_2^t)$ are odd. Since $t\in\mathfrak S$, we have $\langle \c_i,\c_i\rangle_{J_i^t}=0$ by \cite[Corollary~12]{PS}. Hence, by \Cref{cor:ps-class-twists}, \[ \dim_{\F_2}\Sel^2(J_i^t)+\rk_2(J_i^t)\equiv 0\pmod 2. \] Thus, in this situation, a symmetric bilinear form $P\colon \Sel^2(J_i^t)\times\Sel^2(J_i^t)\to\F_2$ is admissible (in the sense of \Cref{def:admissible-pairing}) if and only if $P(\c_i,x)=P(x,x)$ for all $x\in\Sel^2(J_i^t)$, and $P(\c_i,\c_i)=0$.

Since $t\in\mathfrak S$,
we have $\w_i\in \Sel^2(J_i^t)$
for $i=1,2$ by Proposition \ref{prop:ps-class}(4).
Moreover, for $i=1,2$ we have $\w_i\neq 0$ by Proposition
\ref{lem:maximal-galois-image}(3).

Let us construct an admissible pairing on $\Sel^2(J_i^t)$.
Since $\dim\, \Sel^2(J_i^t)$ is odd, there is an even-dimensional 
$\F_2$-vector subspace $V\subset  \Sel^2(J_i^t)$
such that $\Sel^2(J_i^t)=\F_2\w_i\oplus V$. Choose an alternating non-degenerate pairing on $V$
and extend it to a pairing $P_i(x,y)$ on $\Sel^2(J_i^t)$ with kernel $\F_2\w_i$. The pairing $P_i(x,y)$
 is admissible, because $\c_i=\w_i$
 if $g_i$ is even,
 or $\c_i=0$ if $g_i$ is odd.

By \Cref{lem:maximal-galois-image}(3), applied to $C_i^t$,
and using \Cref{prop:ps-class}\textup{(7)}, we have
\[
\Sel^2(J_i^t/k)\cap H^1(G_i,J_i[2])=\langle\w_i\rangle .
\]
Thus the hypotheses of \Cref{prop:simultaneous-morgan} are satisfied for the two
admissible pairings $P_i$ just constructed. Therefore there exists a single
$s\in\mathfrak S$ such that, for $i=1,2$,
\[
\Sel^2(J_i^s/k)=\Sel^2(J_i^t/k)
\quad\text{inside } H^1(k,J_i[2]),
\]
and the Cassels--Tate pairing on $\Sel^2(J_i^s/k)$ is equal to $P_i$.
Thus $\dim_{\F_2}\Sel^2(J_i^s/k)$ is odd for $i=1,2$.

Let $q_i\colon \Sel^2(J_i^s)\to \Sha(J_i^s)[2]$ be the natural surjection. Since the
Cassels--Tate pairing on $\Sel^2(J_i^s)$ is the pullback of the
Cassels--Tate pairing on $\Sha(J_i^s)$, we have
\[
\ker(q_i)\subset \operatorname{rad}(P_i)=\F_2\w_i.
\]
Thus $\ker(q_i)$ is either $0$ or $\F_2\w_i$. 

Assume 
$\ker(q_i)=0$. Then
\[
\dim_{\F_2}\Sha(J_i^s)[2]=\dim_{\F_2}\Sel^2(J_i^s),
\]
which is odd.
Under the hypothesis that $\Sha(J_i^u)[2^\infty]$ is finite for every $u\in\mathfrak S$,  this implies  that 
$
\dim_{\F_2}\Sha(J_i^s)_{\rm nd}[2] 
$
is odd. 
By
\cite[Theorem 8, (1)$\iff$(3)]{PS} (see Proposition \ref{prop:ps-class}(5) above), 
$\dim_{\F_2}\Sha(J_i^s)_{\rm nd}[2]$
is odd if and only if
$\langle\mathfrak c_i,\mathfrak c_i\rangle_{J_i^s}\neq 0$.
This is a contradiction, since for $s \in \mathfrak S$, by \Cref{prop:ps-class}(6), 
we have
$\langle\mathfrak c_i,\mathfrak c_i\rangle_{J_i^s}=0$. 

Therefore $\ker(q_i)=\F_2\w_i$, and hence $\w_i$ maps to
zero in $\Sha(J_i^s)$. Since $\w_i$ maps to $[\Pic^1_{C_i^s/k}]$ in $H^1(k,J_i)$, this implies that $\Pic^1_{C_i^s/k}(k)\neq\emptyset$.  We conclude by \Cref{lemma1}.
\end{proof}

\begin{rmk}
    In contrast to \cite{Mor25}, we do not need to use a second descent argument. Indeed, $s\in \mathfrak S$ implies $\ang{c_i,c_i}_{J_i^s}=0$ (see \Cref{prop:ps-class}(6)), and hence 
$\dim_{\F_2}\Sha_{\rm nd}(J_i^s)[2]$
     is even (see \Cref{prop:ps-class}(5)).
\end{rmk}

\begin{proof}[Proof of Theorem~\ref{t1}]
Since $X({\bf A}_k)\neq\emptyset$, by \Cref{thm:c1-c2-zero-in-sha} there exists $s\in \mathfrak S$ such that $C_1^s\times_k C_2^s$ has a zero-cycle of degree $1$. Letting $Y$ be the blow-up of $C_1^s\times_k C_2^s$ at the fixed locus of the involution, we deduce that $Y$ has a zero-cycle of degree $1$. Since $X$ is birational to the quotient of $Y$ by the induced involution, we conclude that $X$ has a zero-cycle of degree $1$, as desired.
\end{proof}

\begin{rmk}\label{rmk:wa}
    The set of $k$-points $(a_0,\dots,a_{2g_1+2};b_0,\dots,b_{2g_2+2})$ of $\mathbb{A}^{2g_1+3}_k\times_k\mathbb{A}^{2g_2+3}_k$ such that \Cref{sit1} holds for the polynomials $f_1(x)=a_{2g_1+2}x^{2g_1+2}+\dots+a_0$ and $f_2(x)=b_{2g_2+2}x^{2g_2+2}+\dots+b_0$ satisfies weak approximation.

    Indeed, let $S_0$ be a set of places of $k$ containing the $2$-adic places, the archimedean places, and the non-archimedean places whose residue field has size $<4\max\{g_1,g_2\}^2$. Fix two distinct places $w_1$ and $w_2$ not in $S_0$, let $\mathfrak p_1$ and $\mathfrak p_2$ be the corresponding prime ideals, and let $\pi_1\in \mathfrak p_1$ and $\pi_2\in \mathfrak p_2$ be uniformizers.

    Let $h_1\in \O_k[x_1]$ and  $h_2\in \O_k[x_2]$ be such that, for $i=1,2$, the leading coefficient of $h_i$ is a local unit at $w_1$ and $w_2$, and $h_i(x_i)\equiv (x_i^2-\pi_i^2)q_i(x_i)\pmod {\mathfrak p_i^3}$, for some polynomial $q_i(x_i)\in \O_k[x_i]$ whose reduction modulo $\mathfrak p_i$ is separable and non-zero at $x_i=0$, and the reduction of $h_i(x_i)$ modulo $\mathfrak p_j$ is separable for $j\neq i$.

    By a theorem of Ekedahl \cite[Theorem 1.3]{Eke90}, the set of separable polynomials $f_1(x_1)$ and $f_2(x_2)$ with linearly disjoint splitting fields and such that, for $i=1,2$, the polynomial $f_i(x_i)$ has degree $2g_i+2$ and Galois group $S_{2g_i+2}$, and is sufficiently close to $h_i(x_i)$ at $w_1$ and $w_2$, satisfies weak approximation.
\end{rmk}

\appendix

\section{The surfaces in Situation~\ref{sit1} are of general type}

In this appendix we prove the following statement.

\begin{lemma}\label{lemma:X-general-type}
Let $k$ be a field of characteristic zero. For $i=1,2$, let $f_i(x)\in k[x]$ be a separable polynomial of degree $2g_i+2\geq 6$. Let $X$ be a smooth, projective,
geometrically integral variety
birational to the affine surface
\[
        y^2=f_1(x_1)f_2(x_2).
\]
Then $X$ is of general type.
\end{lemma}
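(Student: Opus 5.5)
We may pass to $\bar k$, since being of general type is a birational invariant stable under base change in characteristic zero. We then compute the plurigenera of a smooth model directly. Let $C_i$ be the smooth projective hyperelliptic curve $y_i^2=f_i(x_i)$ of genus $g_i\geq 2$. The function field of $X$ is the subfield of $\bar k(C_1\times C_2)$ fixed by the involution $\iota=(\iota_1,\iota_2)$, with $y=y_1y_2$. So $X$ is birational to $(C_1\times C_2)/\iota$, and as in the text we take for $X$ the minimal resolution of this quotient. The quotient has only finitely many singular points, namely the images of the $(2g_1+2)(2g_2+2)$ isolated fixed points $(P,Q)$, where $P,Q$ are Weierstrass points.

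The key steps are as follows.

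\textbf{Step 1: the singularities are canonical.} At a fixed point, $\iota$ acts on the tangent space by $-1$ on both coordinates, so the image is an $A_1$ singularity, $\frac12(1,1)$. This is a rational double point, hence canonical. Therefore the resolution $X\to (C_1\times C_2)/\iota=:Y$ is crepant: $K_X=\pi^*K_Y$, and $H^0(X,mK_X)=H^0(Y,mK_Y)$.

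\textbf{Step 2: pluricanonical forms are the $\iota$-invariants.} Let $q\colon C_1\times C_2\to Y$ be the quotient map. Since $q$ is étale in codimension one, $H^0(Y,mK_Y)=H^0(C_1\times C_2, mK)^{\iota}$. Here $K=p_1^*K_{C_1}+p_2^*K_{C_2}$. Hence, by Künneth,
\[
H^0(mK)=H^0(C_1,mK_{C_1})\otimes H^0(C_2,mK_{C_2}),
\]
and the invariants are $V_1^+\otimes V_2^+\oplus V_1^-\otimes V_2^-$, where $V_i^{\pm}$ denote the eigenspaces of $\iota_i$ on $H^0(C_i,mK_{C_i})$.

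\textbf{Step 3: growth of the eigenspaces.} Each eigenspace $V_i^{\pm}$ has dimension roughly $\tfrac12\dim H^0(mK_{C_i})\sim m(g_i-1)$. This can be seen from the explicit basis $x^j(dx/y)^{\otimes m}$ and $y\,x^j (dx/y)^{\otimes m}$ of $m$-canonical forms on a hyperelliptic curve. For $m$ even, $(dx/y)^{\otimes m}$ is $\iota_i$-invariant, and both parities contribute linearly many forms once $g_i\ge2$. Consequently $h^0(mK_X)\geq \dim V_1^+\dim V_2^+\sim c\,m^2$ with $c>0$, so $X$ has Kodaira dimension $2$.

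\textbf{Alternative.} One can bypass the counting by working numerically. Since $K_{C_1\times C_2}=q^*K_Y$ with $K_Y$ $\mathbb Q$-Cartier, we get $K_Y^2=\tfrac12(2g_1-2)(2g_2-2)\cdot 2>0$. Moreover $K_Y$ is nef, because $q^*K_Y$ is ample. Thus $K_X=\pi^*K_Y$ is nef and big.

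\textbf{Main obstacle.} The delicate point is Step 1: we must confirm that the fixed points are isolated with action $-\mathrm{id}$, so that no ramification divisor appears and the resolution is crepant. If that holds, bigness is immediate from the numerical argument, and I would present that version.
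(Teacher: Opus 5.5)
Your proof is correct. It reaches the same singular surface as the paper, but by a different route.

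The paper works directly from the equation. It views the normal model $Z$ as the double cover of $\P^1\times\P^1$ branched along the grid $D$ of bidegree $(2g_1+2,2g_2+2)$. It gets $\omega_Z\simeq\pi^*\O(g_1-1,g_2-1)$, which is ample, from the canonical bundle formula for double covers. It locates the singularities at the crossing points of $D$, where they have the form $z^2=s_1s_2$, and it checks by an explicit blow-up and adjunction computation that the minimal resolution is crepant.

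Your $Y=(C_1\times C_2)/\iota$ is in fact isomorphic to that $Z$: both are the normalization of $\P^1\times\P^1$ in $\bar k(x_1,x_2)(\sqrt{f_1f_2})$. You compute its canonical class through the quotient map instead. Since $q$ is \'etale in codimension one, $q^*K_Y=K_{C_1\times C_2}$, which is ample. You obtain crepancy from the general fact that $A_1$ points are canonical, rather than by hand.

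What each approach buys:
\begin{itemize}
\item Your K\"unneth and eigenspace count gives the plurigenera explicitly, $P_m(X)=\dim V_1^+\dim V_2^+ +\dim V_1^-\dim V_2^-$, which the paper's argument does not.
\item Your argument fits the description of $X$ already used in the main text.
\item The paper's argument never needs the curves $C_i$, and it is self-contained on the crepancy point.
\end{itemize}

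The step you flag as the main obstacle is immediate. The fixed locus of $\iota$ is $W_1\times W_2$, where $W_i$ is the set of $2g_i+2$ points with $y_i=0$. There are no fixed points at infinity, because $\deg f_i$ is even and $\iota_i$ swaps the two points at infinity. A nontrivial involution of a smooth curve in characteristic zero acts by $-1$ on the tangent line at each fixed point. So the fixed points are isolated, $\iota$ acts there by $-\mathrm{id}$, and no ramification divisor appears.
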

\begin{proof}
We may assume that $k$ is algebraically closed. Let $B_1\subset \P^1$ and $B_2\subset \P^1$ be the reduced divisors defined by $f_1$ and $f_2$, respectively. For $i=1,2$, since $\deg(f_i)=2g_i+2$ and $f_i$ is separable, $B_i$ has degree $2g_i+2$. Let $Y\coloneqq \P^1_k\times_k\P^1_k$, and for $i=1,2$ let $\on{pr}_i\colon Y\to \P^1_k$ be the $i$th projection. Then $D\coloneqq \on{pr}_1^*B_1+\on{pr}_2^*B_2$ is a reduced divisor of bidegree $(2g_1+2,2g_2+2)$ on $Y$. In particular
\[
        \O_Y(D)\simeq \O_Y(2g_1+2,2g_2+2)
        \simeq \O_Y(g_1+1,g_2+1)^{\otimes 2}.
\]
Let $L=\O_Y(g_1+1,g_2+1)$, so that $\O_Y(D)\simeq L^{\otimes 2}$. The equation $y^2=f_1(x_1)f_2(x_2)$ defines, after compactification and normalization, the double cover $\pi\colon Z\to Y$ associated with the line bundle $L$ and the branch divisor $D$. Thus $X$ is birational to a resolution of singularities of $Z$.

By construction $Z=\Spec_Y(\O_Y\oplus L^{-1})$, where the multiplication on $\O_Y\oplus L^{-1}$ is determined by a section $s\in H^0(Y,L^{\otimes 2})$ with zero locus $D$. It follows that $Z$ is, Zariski-locally on $Y$, a hypersurface inside the smooth variety $\mathbb{A}^1_k\times_kY$. In particular, $Z$ is Gorenstein, and hence the canonical sheaf $\omega_Z$ is invertible. By the canonical bundle formula for double covers,
we have
\[
        \omega_Z\simeq \pi^*(\omega_Y\otimes L);
\]
see \cite[Chapter I, Lemma 17.1(iii)]{BHPV}. Since $\omega_Y\simeq \O_Y(-2,-2)$, we get
\[
        \omega_Z\simeq\pi^*\O_Y(g_1-1,g_2-1).
\]
Since $g_1,g_2\geq 2$, the line bundle $\O_Y(g_1-1,g_2-1)$ is ample. It follows that $\omega_Z$ is ample.

The divisor $D$ is a union of vertical and horizontal lines. Away from the crossing points of $D$, the double cover $\pi$ is smooth. Near a crossing point, one may choose local parameters $s_1,s_2$ on $Y$ such that $D$ is given by $s_1s_2=0$, and so $Z$ is locally given by $z^2=s_1s_2$. 

The minimal resolution of the singularity $z^2=st$ is obtained by blowing up
the origin. Let $A=\A^3_{s,t,z}$ and let $W\subset A$ be the hypersurface
$z^2=st$. If $b\colon \widetilde A\to A$ is the blow-up of the origin, $E$ is
the exceptional divisor, and $\widetilde W$ is the strict transform of $W$, then
$\omega_{\widetilde A}\simeq b^*\omega_A\otimes \O_{\widetilde A}(2E)$,
see \cite[Chapter II, Exercise 8.5(b)]{HartshorneAG}. On the other hand, an explicit computation shows
$\O_{\widetilde A}(b^*W)\simeq
\O_{\widetilde A}(\widetilde W+2E)$.
By the adjunction formula we have
\[
        \omega_W\simeq (\omega_A\otimes \O_A(W))|_W, 
        \qquad \omega_{\widetilde W}\simeq
        (\omega_{\widetilde A}\otimes
        \O_{\widetilde A}(\widetilde W))|_{\widetilde W}.
\]
Therefore
\[
\begin{aligned}
        \omega_{\widetilde W}
        &\simeq
        \bigl(b^*\omega_A\otimes \O_{\widetilde A}(2E)
        \otimes \O_{\widetilde A}(\widetilde W)\bigr)|_{\widetilde W} \\
        &\simeq
        \bigl(b^*\omega_A\otimes \O_{\widetilde A}(2E)
        \otimes \O_{\widetilde A}(b^*W-2E)\bigr)|_{\widetilde W} \\
        &\simeq
        \bigl(b^*(\omega_A\otimes \O_A(W))\bigr)|_{\widetilde W} \\
        &\simeq
        (b|_{\widetilde W})^*\omega_W.
\end{aligned}
\]
Now let $\rho\colon \widetilde Z\to Z$ be the minimal resolution. Since $Z$ is
Gorenstein and $\widetilde Z$ is smooth, the invertible sheaf $\omega_{\widetilde Z}\otimes \rho^*\omega_Z^{-1}$ 
is of the form $\O_{\widetilde Z}(R)$ for a Cartier divisor $R$ supported on
the exceptional locus of $\rho$. Writing
\[
        R=\sum_P a_P E_P,
\]
where $P$ runs through the singular points of $Z$ and $E_P$ is the exceptional
curve over the $A_1$-singularity $P$, the local calculation above gives $a_P=0$ for every $P$.
Hence $R=0$, and therefore
$\omega_{\widetilde Z}\simeq \rho^*\omega_Z$.
Since $\omega_Z$ is ample, its pullback $\rho^*\omega_Z$ is nef and big. Hence
$\omega_{\widetilde Z}$ is nef and big. Therefore $\widetilde Z$ has Kodaira
dimension $2$. Since $X$ is birational to $\widetilde Z$, the surface $X$ is of
general type.
\end{proof}

\end{document}